\newtheorem{theorem}{{\bf Theorem}}[section]
\newtheorem{conjecture}[theorem]{{\bf Conjecture}}
\newtheorem{corollary}[theorem]{{\bf Corollary}}
\newtheorem{definition}[theorem]{{\bf Definition}}
\newtheorem{example}[theorem]{{\bf Example}}
\newtheorem{lemma}[theorem]{{\bf Lemma}}
\newtheorem{notation}[theorem]{{\bf Notation}}
\newtheorem{proposition}[theorem]{{\bf Proposition}}
\newtheorem{remark}[theorem]{{\bf Remark}}
\newtheorem{prob}[theorem]{{\bf Problem}}
\newcommand{\FF}{ \ensuremath{\mathbb{F}}}
\newcommand{\QQ}{ \ensuremath{\mathbb{Q}}}
\newcommand{\ZZ}{ \ensuremath{\mathbb{Z}}}
\newcommand{\TPSS}{S^{\hspace{.2mm}2}\! \times \hspace{-3.3mm}_{-} \,
S^{\hspace{.1mm}1}}
\begin{document}

\author[a] {Bhaskar Bagchi}
\author[b] {Basudeb Datta}
\author[c] {Jonathan Spreer}

 \affil[a] {Theoretical Statistics and Mathematics Unit, Indian Statistical Institute, Bangalore 560\,059,
 India. bbagchi@isibang.ac.in}
 \affil[b] {Department of Mathematics, Indian Institute of Science, Bangalore 560\,012, India.
 dattab@math.iisc.ernet.in.}
 \affil[c] {School of Mathematics and Physics, The University of Queensland, Brisbane QLD 4072, Australia.
 j.spreer@uq.edu.au.}

\title{Tight triangulations of closed 3-manifolds}


\date{{\bf (European Journal of Combinatorics 54 (2016) 103--120.)}}

\maketitle

\vspace{-10mm}

\begin{abstract}
A triangulation of a closed 2-manifold is tight with respect to a field of characteristic
two if and only if it is neighbourly; and it is tight with respect to a field of odd characteristic if and only
if it is neighbourly and orientable. No such characterization of tightness was previously known for higher
dimensional manifolds. In this paper, we prove that a triangulation of a closed 3-manifold is tight with respect
to a field of odd characteristic if and only if it is neighbourly, orientable and stacked. In consequence, the
K\"{u}hnel-Lutz conjecture is valid in dimension three for fields of odd characteristic.

Next let $\FF$ be a field of characteristic two. It is known that, in this case, any neighbourly and stacked
triangulation of a closed 3-manifold is $\FF$-tight. For closed, triangulated  3-manifolds with at most 71
vertices or with first Betti number at most 188, we show that the converse is true. But the possibility of the
existence of an $\FF$-tight, non-stacked triangulation on a larger number of vertices remains open. We prove the
following upper bound theorem on such triangulations. If an $\FF$-tight triangulation of a closed 3-manifold has
$n$ vertices and first Betti number $\beta_1$, then $(n-4)(617n- 3861) \leq 15444\beta_1$. Equality holds here if
and only if all the vertex links of the triangulation are connected sums of boundary complexes of icosahedra.
\end{abstract}

\noindent {\small {\em MSC 2010\,:} 57Q15, 57R05.

\noindent {\em Keywords:} Stacked spheres; Stacked manifolds; Triangulations of 3-manifolds; Tight
triangulations; Icosahedron.}

\section{Introduction}

All simplicial complexes considered in this paper are finite and abstract. All homologies are simplicial
homologies with coefficients in a field $\FF$. The vertex set of a simplicial complex $X$ will be denoted by
$V(X)$. For $A \subseteq V(X)$, the induced subcomplex $X[A]$ of $X$ on the vertex set $A$ is defined by $X[A] :=
\{\alpha\in X \, : \, \alpha\subseteq A\}$. A simplicial complex $X$ is said to be a {\em triangulated manifold}
if it triangulates a manifold, i.e., if the geometric carrier $|X|$ of $X$ is a topological manifold. A closed,
triangulated  $d$-manifold $X$ is said to be {\em $\FF$-orientable} if $H_d(X; \FF) \neq 0$. So, for a field
$\FF$ of characteristic two, any closed, triangulated  manifold is $\FF$-orientable.

Taking his cue from pre-existing notions of tightness in the theory of smooth and polyhedral embedding of
manifolds in Euclidean spaces, K\"{u}hnel \cite{Ku} introduced the following precise notion of tightness of a
simplicial complex with respect to a field.

\begin{definition} \label{def:tight}
{\rm Let $X$ be a simplicial complex and $\FF$ be a field. We say that $X$ is} tight with respect to $\FF$ {\rm
(in short,} $\FF$-tight{\rm ) if (a) $X$ is connected, and (b) the $\FF$-linear map $H_{\ast}(Y; \FF) \to
H_{\ast}(X; \FF)$, induced by the inclusion map $Y \hookrightarrow X$, is injective for every induced subcomplex
$Y$ of $X$. }
\end{definition}

Recall that, if $X$ is a simplicial complex of dimension $d$, then its {\em face numbers} $f_i(X)$ are  defined
by $f_i(X) := \#\{\alpha\in X \, : \, \dim(\alpha)=i\}, \,  0\leq i\leq d$. For $k\geq 2$, a simplicial complex
$X$ is said to be {\em $k$-neighbourly} if any set of $k$ vertices of $X$ forms a face, i.e., if $f_{k-1}(X) =
\binom{f_0(X)}{k}$. A 2-neighbourly simplicial complex is called {\em neighbourly}.

\begin{definition} \label{def:sminimal}
{\rm A simplicial complex $X$ is said to be} strongly minimal {\rm if, for every triangulation $Y$ of the
geometric carrier $|X|$ of $X$, we have $f_i(X) \leq f_i(Y)$ for all $i$, $0\leq i\leq \dim(X)$. }
\end{definition}

Thus, a strongly minimal triangulation of a topological space, if it exists, is the most economical among all
possible triangulations of the space. Unfortunately, there are very few combinatorial criteria available in the
literature which ensure strong minimality. The notion of tightness is of great importance in combinatorial
topology because of the following tantalizing conjecture \cite{KL}.

\begin{conjecture}[K\"{u}hnel-Lutz] \label{conj:KL}
For any field $\FF$, every $\FF$-tight, closed, triangulated manifold is strongly minimal.
\end{conjecture}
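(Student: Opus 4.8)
The plan is to reduce the assertion of strong minimality to two separate ingredients and to compare an exact face-vector computation for the tight complex against a universal lower bound valid for every competing triangulation. Concretely, write $X$ for the given $\FF$-tight triangulated closed $d$-manifold, put $n = f_0(X)$, and let $\beta_i = \dim_{\FF} H_i(X;\FF)$. The target inequalities $f_i(X) \le f_i(Y)$ must hold for \emph{all} triangulations $Y$ of $|X|$, in particular for those with fewer vertices than $X$; so the argument cannot stay within the combinatorics of $X$ alone, but must control the $f$-vector of an arbitrary triangulation of the underlying space in terms of its topology.

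First I would use the tightness hypothesis to pin down the $f$-vector of $X$ exactly. The key is the $\sigma$- and $\mu$-vector machinery: for a connected complex one has $\mu_i(X;\FF) \ge \beta_i$ for every $i$, with simultaneous equality precisely when $X$ is $\FF$-tight. Feeding these equalities into the Dehn--Sommerville relations for closed manifolds and the Morse-theoretic interpretation of the $h$-vector, I expect to express the full face vector of $X$ as an explicit function $\Phi(n,\beta_0,\dots,\beta_d)$. In dimension three this recovers exactly the statement that $X$ is neighbourly and stacked, so that $\Phi$ is the $f$-vector of a stacked $3$-manifold; the task in general is to show that tightness forces the analogous extremal (minimal $g$-vector) behaviour in every dimension.

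Second I would establish the matching lower bound: every triangulation $Y$ of $|X|$ satisfies $f_0(Y) \ge n^{\ast}(\beta_\ast)$, a Heawood/Brehm--K\"{u}hnel-type bound attained by $X$, and, given its vertex number, $f_i(Y) \ge \Phi_i$ coordinatewise. This is a generalized lower bound theorem for triangulated manifolds with Betti-number corrections. For spheres it follows from the $g$-theorem together with the results on generalized lower bound inequalities (rigidity of the $1$-skeleton, algebraic shifting, and the Murai--Nevo characterization of equality); for manifolds the rigidity-based Lower Bound Theorem and its higher analogues supply the first few inequalities. I would attempt to upgrade these to all face numbers and all dimensions, tracking the topological correction terms so that the equality case is characterised and is attained exactly by $\Phi$.

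The hard part will be this second ingredient in full generality. A sharp, all-dimensions lower bound theorem for arbitrary triangulated closed manifolds---one whose equality case is governed by the tight face vector $\Phi$ and which is valid over an arbitrary field---is not known; the field-dependence of the Betti numbers (in characteristic two every manifold is $\FF$-orientable and Poincar\'{e} duality holds with $\ZZ_2$-coefficients, which shifts the relevant inequalities) and the absence of a general manifold $g$-theorem are the genuine obstructions. This is exactly why the statement remains a conjecture rather than a theorem: the low-dimensional and odd-characteristic cases are accessible because there the required lower bound theorem can be proved by hand (via stackedness and rigidity), whereas the general case would demand a substantial extension of Stanley--Reisner commutative-algebra and rigidity techniques from spheres to all triangulated manifolds.
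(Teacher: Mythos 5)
This statement is Conjecture \ref{conj:KL} in the paper: it is \emph{not} proved there, and no proof of it is known. The paper only establishes special cases --- Corollary \ref{coro:4.9} (dimension three, ${\rm char}(\FF)\neq 2$), obtained by combining Theorem \ref{theorem:4.8} (tight $\Rightarrow$ stacked) with Proposition \ref{prop:sminimal} (tight $+$ locally stacked $\Rightarrow$ strongly minimal). Your proposal is a proof \emph{strategy}, not a proof, and you say so yourself in the final paragraph; so the honest verdict is that there is a genuine gap, indeed the entire second ingredient (a sharp, all-face-numbers lower bound theorem for arbitrary triangulations of $|X|$ with Betti-number corrections, over an arbitrary field, with controlled equality case) is missing and is precisely the open content of the conjecture. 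Nothing in the paper supplies it, and no extension of rigidity or $g$-theorem techniques currently covers all closed manifolds in all dimensions.

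Beyond the admitted gap, your first ingredient is also flawed: tightness does \emph{not} pin down the $f$-vector of $X$ as an explicit function $\Phi(n,\beta_0,\dots,\beta_d)$, even in dimension three. The paper's own Section 5 shows why: for ${\rm char}(\FF)=2$ it is open whether an $\FF$-tight closed $3$-manifold must be stacked, and Theorem \ref{theorem:5.11} together with Corollary \ref{coro:5.12} only constrains $(n,\beta_1)$ by congruences and inequalities, with the extremal non-stacked case (locally icosian links, Tables \ref{tbl:icsian.3mfd}--\ref{tbl:Z2tight.3mfd.c=}) not excluded. A hypothetical $\ZZ_2$-tight locally icosian triangulation would be neighbourly but not locally stacked, its $f$-vector would differ from the stacked-minimal one with the same Betti numbers, and Proposition \ref{prop:sminimal} would not apply to it --- so even the ``exact $f$-vector from $\mu_i=\beta_i$'' step fails as stated. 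What your outline does capture correctly is the shape of the known partial results (the odd-characteristic dimension-three case really does go through stackedness and a lower bound theorem, as in the paper), and the observation, consistent with Example \ref{exam:nsmtight}, that any proof must exploit the closed-manifold hypothesis (e.g., Dehn--Sommerville, Poincar\'{e} duality) since the conjecture is false for general simplicial complexes. But a correct review must record that neither you nor the paper proves the statement; it remains a conjecture.
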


Intuitively, $\FF$-tightness of a triangulated manifold $X$ means that all parts of $X$ are essential in order to
capture the $\FF$-homology of the topological space $|X|$. In view of this intuition, Conjecture \ref{conj:KL}
appears to be entirely plausible. However, Example \ref{exam:nsmtight} shows that this intuition is not
correct for arbitrary simplicial complexes.

\medskip

Before we proceed, let us recall the notion of stackedness from \cite{MNStacked}.

\begin{definition} \label{def:stacked}
{\rm A triangulated manifold $\Delta$ of dimension $d+1$ is said to be {\em stacked} if all its faces of
codimension (at least) two are contained in the boundary $\partial\Delta$. A closed, triangulated manifold $M$ of
dimension $d$ is said to be {\em stacked} if there is a stacked triangulated manifold $\Delta$ of dimension $d+1$
such that $M = \partial \Delta$. }
\end{definition}

In particular, a {\em stacked sphere} is a triangulated sphere which may be realized as the boundary of a stacked
triangulated ball.

\begin{definition} \label{def:lstacked}
{\rm A triangulated manifold is said to be {\em locally stacked} if all its vertex links are stacked spheres or
stacked balls.}
\end{definition}

Clearly, all stacked triangulated manifolds are locally stacked, but the converse is false (see Example
\ref{exam:Lutz}). Locally stacked triangulations are a rich source of tight triangulations. With just a few
exceptions, all known tight triangulated manifolds are locally stacked. From \cite{BDStacked}, we know that
when $d\neq 3$, an $\FF$-orientable, neighbourly, locally stacked, triangulated $d$-manifold is $\FF$-tight. See Effenberger \cite{ef} for a proof of this in the case $\FF = \ZZ_2$. But this result is not true in dimension 3 (c.f. Example
\ref{exam:Lutz}). Due to \cite[Theorem 2.24; case $k=1$]{BDStacked} we have the following.

\begin{proposition}[Bagchi-Datta] \label{prop:BDTh2.24}
Let $M$ be a locally stacked, $\FF$-orientable, neighbourly, closed, triangulated $3$-manifold. Then the
following are equivalent
\begin{enumerate}[{\rm (i)}]
\item $M$ is $\FF$-tight,

\item $M$ is stacked, and

\item $\binom{f_0(M)-4}{2} = 10 \beta_1(M; \FF)$.
\end{enumerate}
\end{proposition}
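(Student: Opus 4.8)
The plan is to rewrite condition (iii) in terms of the second $g$-number and then prove the cycle of implications (ii) $\Rightarrow$ (i) $\Rightarrow$ (iii) $\Rightarrow$ (ii). For a triangulated closed $3$-manifold on $n = f_0(M)$ vertices, the Dehn--Sommerville relations together with $\chi(M)=0$ give $f_2 = 2f_3$ and $f_3 = f_1 - f_0$, so the whole $f$-vector is determined by $n$ and $f_1$; and since $M$ is neighbourly, $f_1 = \binom{n}{2}$. Writing $g_2(M) := f_1 - 4f_0 + 10$, the direct computation
\[
g_2(M) = \binom{n}{2} - 4n + 10 = \tfrac{1}{2}(n-4)(n-5) = \binom{n-4}{2}
\]
shows that (iii) is equivalent to the single equality $g_2(M) = 10\,\beta_1(M;\FF)$. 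I would use this reformulation throughout, since $g_2$ is precisely the quantity controlled both by the lower bound theorem and by the tightness machinery.

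The implication (ii) $\Rightarrow$ (i) is the soft one and is already recorded in the literature: a neighbourly, $\FF$-orientable, stacked closed $3$-manifold is $\FF$-tight. Directly, one verifies Definition \ref{def:tight}(b) by using that $M = \partial\Delta$ for a stacked $4$-manifold $\Delta$; the stacked structure pins down the homology of each induced subcomplex $M[A]$, and neighbourliness forces the maps $H_\ast(M[A];\FF) \to H_\ast(M;\FF)$ to be injective.

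For (i) $\Rightarrow$ (iii) I would convert tightness into the numerical identity via the Bagchi--Datta $\mu$-vector criterion: for a connected complex, $\FF$-tightness is equivalent to $\mu_i(M;\FF) = \beta_i(M;\FF)$ for all $i$, and in particular it forces $\mu_1(M;\FF) = \beta_1(M;\FF)$. It then remains to establish the identity $\mu_1(M;\FF) = g_2(M)/10$ for neighbourly locally stacked closed $3$-manifolds. This is where local stackedness enters: each vertex link is a stacked $2$-sphere, which makes the link contributions to $\mu_1$ collapse and leaves only the term proportional to $g_2$. Combining the two statements gives $g_2(M) = 10\,\beta_1(M;\FF)$, which is (iii).

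The implication (iii) $\Rightarrow$ (ii) is the equality (rigidity) case of the lower bound theorem, and I expect it to be the main obstacle. The general lower bound inequality $g_2(M) \geq 10\,\beta_1(M;\FF)$ holds for every $\FF$-orientable connected closed $3$-manifold, so (iii) says that $M$ attains it with equality. Since $M$ is locally stacked, all its vertex links are stacked $2$-spheres, placing $M$ in Walkup's class; for manifolds in this class, equality in $g_2 = 10\beta_1$ is exactly the condition that $M$ be obtainable from a stacked $3$-sphere by a sequence of combinatorial handle additions, i.e. that $M$ be stacked in the sense of Definition \ref{def:stacked}. The delicate point is to promote the numerical coincidence to this global structural conclusion, which needs the sharp characterization of when the bound is attained rather than the mere inequality; it is precisely here that local stackedness is indispensable, since in its absence equality in $g_2 \geq 10\beta_1$ need not force a stacked global structure (cf. the locally stacked but non-stacked examples alluded to after Definition \ref{def:lstacked}).
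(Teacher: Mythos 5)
You should first be aware that the paper contains no proof of this proposition: it is imported verbatim from Bagchi--Datta \cite{BDStacked} (Theorem 2.24, case $k=1$), so there is no internal argument to compare against, and your proposal has to be judged on its own merits and against the machinery the paper does develop. Your reformulation of (iii) as $g_2(M)=10\beta_1(M;\FF)$ is correct, and your route for (i) $\Leftrightarrow$ (iii) is sound and is exactly what the paper's toolkit supports: Lemma \ref{lemma:2.6} gives $\beta_1\leq\mu_1$ with equality precisely when $M$ is $\FF$-tight, and for a neighbourly, locally stacked $M$ every vertex link is an $(n-1)$-vertex stacked $2$-sphere, i.e.\ a connected sum of $n-4$ copies of $S^{\hspace{.15mm}2}_4$, whence $\sigma_0^{\ast}(M_x)=(n-9)/20$ (the $k=0$ case of Corollary \ref{coro:5.10}) and $\mu_1(M)=1+\sum_x\sigma_0^{\ast}(M_x)=\tbinom{n-4}{2}/10$. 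Note this gives \emph{both} directions of (i) $\Leftrightarrow$ (iii), not just the one you use.

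The genuine gaps are in the other two implications, and the second is fatal. For (ii) $\Rightarrow$ (i), ``the stacked structure pins down the homology of each induced subcomplex'' is an assertion, not an argument; the fixable version is to prove (ii) $\Rightarrow$ (iii) by counting along a handle decomposition (Proposition \ref{prop:Walkup_class}: each combinatorial handle addition deletes $4$ vertices and $6$ edges and raises $\beta_1$ by one, so neighbourliness forces $10\beta_1=\tbinom{n}{2}-4n+10$), after which (i) follows from Lemma \ref{lemma:2.6} as above. But for (iii) $\Rightarrow$ (ii) you offer no argument at all: you appeal to ``the sharp characterization of when the bound $g_2\geq 10\beta_1$ is attained,'' and that characterization, under exactly the present hypotheses, \emph{is} the Bagchi--Datta theorem being proved --- the proposal is circular at its crux. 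Worse, your bridge ``locally stacked \ldots placing $M$ in Walkup's class'' is false in dimension $3$ if Walkup's class means manifolds obtained from stacked spheres by handle additions (which, by Proposition \ref{prop:Walkup_class}, is the same as stacked): the paper's Example \ref{exam:Lutz} is a neighbourly, locally stacked, non-stacked closed $3$-manifold, and Kalai's identification of the two classes \cite{Ka} holds only in dimension $d\geq 4$. Your closing parenthetical also mis-cites that example: Lutz's complex has $g_2=15>10=10\beta_1$, so it is not an equality case, and it \emph{is} locally stacked, so it cannot illustrate what happens ``in the absence'' of local stackedness. What (iii) $\Rightarrow$ (ii) actually requires is a rigidity argument that manufactures, from the numerical equality, a $4$-dimensional triangulation $\Delta$ with $\partial\Delta=M$ and ${\rm skel}_2(\Delta)={\rm skel}_2(M)$; nothing in your outline (nor in this paper) supplies it.
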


Thus, all $\FF$-orientable, neighbourly, stacked, closed, triangulated 3-manifolds are $\FF$-tight (for any field
$\FF$). Note that for $d\geq 4$, the notions of stackedness and local stackedness coincide for triangulated
manifolds (\cite{Ka, BDStacked, MNStacked}). In conjunction with Proposition \ref{prop:BDTh2.24}, this shows that
it is stackedness (as opposed to local stackedness) which seems to be central to tightness. The notion of
stackedness was introduced by Walkup \cite{Wa} and McMullen \& Walkup \cite{MW} in the context of triangulated
spheres. The close relationship between stackedness and tightness has been highlighted in \cite{BaTight, BDStacked, BDStellated, 
Mu}. This is further borne out by Theorem 4.8 of this article.

In \cite{BDStellated}, it was shown that the K\"{u}hnel-Lutz conjecture is true for locally stacked manifolds.
Namely, we have

\begin{proposition}[Bagchi-Datta] \label{prop:sminimal}
For any field $\FF$, every $\FF$-tight, locally stacked, closed, triangulated manifold is strongly minimal.
\end{proposition}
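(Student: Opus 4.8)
The plan is to fix an $\FF$-tight, locally stacked, triangulated closed $d$-manifold $X$ together with an arbitrary triangulation $Y$ of the same space $|X|$, and to prove $f_i(X)\le f_i(Y)$ for every $i$. The first step is structural: $\FF$-tightness forces $X$ to be neighbourly, since for a two-element vertex set $A$ the inclusion $X[A]\hookrightarrow X$ would otherwise send the nontrivial class in $\tilde H_0(X[A])$ to $0$, contradicting injectivity. I would then reduce to the stacked case, establishing the general-dimensional counterpart of the implication (i)$\Rightarrow$(ii) in Proposition \ref{prop:BDTh2.24}, namely that an $\FF$-tight locally stacked manifold is in fact stacked. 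Once $X$ is known to be stacked, its entire $f$-vector becomes an explicit function $\Phi_\bullet\!\big(f_0(X),\beta_1(X;\FF)\big)$, with the higher $g$-numbers vanishing and $g_2(X)=\binom{d+2}{2}\beta_1(X;\FF)$. Since $X$ and $Y$ triangulate the same closed manifold, they share $\beta_1(\,\cdot\,;\FF)$ and the Euler characteristic, so strong minimality is reduced to controlling the vertex and edge counts of the competitor $Y$.

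Next I would extract the two basic inequalities from the Lower Bound Theorem and its generalization. For any triangulation $Y$ of a closed $d$-manifold with $d\ge 3$ one has the manifold form of the Generalized Lower Bound Theorem, $g_2(Y)\ge\binom{d+2}{2}\beta_1(Y;\FF)$, where $g_2=f_1-(d+1)f_0+\binom{d+2}{2}$. Writing $\psi(m):=\binom{m}{2}-(d+1)m+\binom{d+2}{2}$ and using the trivial bound $f_1(Y)\le\binom{f_0(Y)}{2}$, this gives $\psi(f_0(Y))\ge g_2(Y)\ge\binom{d+2}{2}\beta_1$. On the other hand, neighbourliness and stackedness of $X$ turn both estimates into equalities, so $\psi(f_0(X))=\binom{d+2}{2}\beta_1$ exactly. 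Because $\psi$ is strictly increasing for $m\ge d+2$ and every triangulation of a $d$-manifold has at least $d+2$ vertices, monotonicity yields $f_0(Y)\ge f_0(X)$. Feeding this back through $g_2(Y)\ge\binom{d+2}{2}\beta_1=g_2(X)$ and $f_1(Y)=(d+1)f_0(Y)-\binom{d+2}{2}+g_2(Y)$ then gives $f_1(Y)\ge(d+1)f_0(X)-\binom{d+2}{2}+g_2(X)=f_1(X)$. Thus the two lowest face numbers of $X$ are already minimal.

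It remains to propagate these two inequalities to the higher face numbers of the arbitrary $Y$, and this is where I expect the main obstacle to lie. In low dimensions the Dehn--Sommerville relations, valid for every closed manifold triangulation, express $f_2,\ldots,f_d$ as fixed functions of $f_0,f_1$ and the Euler characteristic; for $d=3$ in particular one has $f_2=2(f_1-f_0)$ and $f_3=f_1-f_0$, so $f_0(X)\le f_0(Y)$ and $f_1(X)\le f_1(Y)$ immediately force $f_i(X)\le f_i(Y)$ for all $i$ and the proposition follows. For $d\ge 4$, however, the $f$-vector of a general triangulation carries additional free $h$-parameters that Dehn--Sommerville does not pin down from $f_0$ and $f_1$ alone, so the crude rigidity bound on $g_2$ no longer suffices.

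To overcome this I would bring in the full averaged-Betti-number machinery (the $\sigma$- and $\mu$-vectors), whose entries are homological invariants of induced subcomplexes and hence produce lower bounds on the intermediate $f$-numbers that are insensitive to which triangulation is used. The key inputs are that each averaged invariant dominates the corresponding reduced Betti number with equality across all degrees characterizing $\FF$-tightness, and that within the stacked (Walkup) class these invariants feed back into the complete $f$-vector. Combining the equality case, valid for the tight stacked complex $X$, with the inequalities for $Y$ should force $f_i(X)\le f_i(Y)$ in every degree, giving strong minimality; the delicate point throughout is reconciling the $f_0$-dependent rigidity bounds of the second paragraph with the $f_0$-free averaged-Betti-number bounds, and making the requisite monotonicity precise in the boundary cases.
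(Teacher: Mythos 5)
First, a point of orientation: this paper does not prove Proposition \ref{prop:sminimal} at all --- it is imported verbatim from \cite{BDStellated}. So your attempt has to be measured against the argument in that cited paper, which runs through the Novik--Swartz lower bound theory.

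For $d=3$ your outline is essentially the correct (and essentially the known) argument: tightness gives neighbourliness and $\FF$-orientability (Lemma \ref{lemma:2.5}), tight $+$ locally stacked gives stacked (Proposition \ref{prop:BDTh2.24}), stackedness gives $g_2(X)=\binom{d+2}{2}\beta_1$, the Novik--Swartz inequality $g_2(Y)\geq\binom{d+2}{2}\beta_1$ \cite{NS} plus $f_1(Y)\leq\binom{f_0(Y)}{2}$ and monotonicity of $\psi$ give $f_0(Y)\geq f_0(X)$ and $f_1(Y)\geq f_1(X)$, and Dehn--Sommerville finishes. One gloss needs repair even here: from $f_0(Y)\geq f_0(X)$ and $f_1(Y)\geq f_1(X)$ alone you cannot conclude $f_3(Y)=f_1(Y)-f_0(Y)\geq f_1(X)-f_0(X)=f_3(X)$, since the difference of two inequalities has no sign. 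You must instead write $f_3=g_2+3f_0-10$ and use that $g_2$ and $f_0$ are \emph{separately} minimized by $X$; this is a one-line fix, but it is exactly the kind of cancellation one must watch. (Also note your second-paragraph machinery is $d\geq 3$ specific: for $d=2$, where every closed surface is locally stacked, the constant $\binom{d+2}{2}$ is wrong --- surfaces satisfy $g_2=3\beta_1$, not $g_2\geq 6\beta_1$ --- and that case needs the separate classical count.)

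The genuine gap is your fourth paragraph, i.e.\ the case $d\geq 4$, and the tool you propose there is the wrong one. The $\sigma$/$\mu$-vector machinery produces Morse-type inequalities $\mu_i\geq\beta_i$, with equality characterizing tightness, for a \emph{fixed} complex; it bounds Betti numbers by averaged homological data of induced subcomplexes. It says nothing about the face numbers of the competitor $Y$: the triangulation $Y$ is arbitrary (not tight, not locally stacked), and no relation of the form ``$\mu$-data of $Y$ controls $f_i(Y)$ from below by $f_i(X)$'' exists or is even plausible --- the $\mu$-numbers simply do not feed back into $f$-vectors for general complexes. What is actually needed, and what \cite{BDStellated} uses, is a lower bound theorem valid for \emph{all} triangulations of a closed $\FF$-orientable manifold with prescribed Betti numbers: the Novik--Swartz socle inequalities ($h''_j\geq 0$) \cite{NS}, whose consequence is that every such triangulation has $f_j$ at least the value attained by neighbourly stacked manifolds, together with the fact that your $X$ attains all these bounds. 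Separately, your reduction ``tight $+$ locally stacked $\Rightarrow$ stacked'' in dimension $\geq 4$ is not ``the general-dimensional counterpart of Proposition \ref{prop:BDTh2.24}(i)$\Rightarrow$(ii)'': in dimension $\geq 4$ it is Kalai's theorem \cite{Ka} that locally stacked closed manifolds lie in Walkup's class, combined with \cite{DM}, and needs no tightness at all; this is citable, but as written your proposal asserts rather than establishes it. So the attempt is sound for the three-dimensional case this paper actually invokes, but as a proof of the stated all-dimensions proposition it is incomplete at precisely the step where the real content lies.
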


The first main result of this paper (in Section 4) is an extension of Proposition \ref{prop:BDTh2.24}: If a closed,
triangulated 3-manifold is tight with respect to a field of odd characteristic then it must be (orientable,
neighbourly and) stacked. This result answers Question 4.5 of \cite{DM} affirmatively, in the case of odd
characteristic. As a consequence of Proposition \ref{prop:sminimal} it follows that the K\"{u}hnel-Lutz
conjecture is true in a special case, namely, if ${\rm char}(\FF) \neq 2$ then any $\FF$-tight, closed,
triangulated 3-manifold is strongly minimal.

Let $X_1$ and $X_2$ be two triangulated $d$-manifolds intersecting in a common facet ($d$-face) $\alpha$. That
is, $X_1 \cap X_2 = \bar{\alpha}$, where $\bar{\alpha}$ denotes $\alpha$ together with all of its subfaces. Then
$X_1\#X_2 = (X_1 \cup X_2 ) \setminus \{\alpha\}$ is said to be the {\em connected sum of $X_1$ and $X_2$ along
$\alpha$} (for a more general definition, see the end of Section 2). Let $I = I^2_{12}$ be the boundary complex
of the icosahedron. Thus, $I$ is a triangulated 2-sphere on 12 vertices. It is well known that $I$ is the unique
triangulation of $S^2$ in which each vertex is of degree 5. We introduce:

\begin{definition} \label{def:icosian}
{\rm A triangulated $2$-sphere is said to be {\em icosian} if it is a connected sum of finitely many copies of
$I$. A triangulated $3$-manifold is said to be {\em locally icosian} if all its vertex links are icosian.}
\end{definition}

In \cite{SpOdd}, the third author proved the following interesting upper bound theorem for tight triangulations
of odd dimensional manifolds.

\begin{proposition}[Spreer] \label{prop:Spreer}
Let $M$ be an $(\ell- 1)$-connected, closed, triangulated $(2\ell + 1)$-manifold. If $M$ is $\FF$-tight, then
${\binom{\lfloor\frac{f_0(M)}{2}\rfloor-1}{\ell+1}\binom{\lceil
\frac{f_0(M)}{2}\rceil-1}{\ell+1}}/{\binom{f_0(M)-1}{\ell+1}}\leq \beta_{\ell}(M; \FF)$.
\end{proposition}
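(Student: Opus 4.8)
The plan is to mine $\FF$-tightness for two inputs---an $(\ell+1)$-neighbourliness property and a family of short exact sequences---and then combine them with Poincar\'e--Lefschetz duality on the $(2\ell+1)$-manifold $M$. Throughout write $V=V(M)$, $N=f_0(M)$, and for $A\subseteq V$ let $Z_\ell(M[A])$, $B_\ell(M[A])$ denote the spaces of simplicial $\ell$-cycles and $\ell$-boundaries with $\FF$-coefficients.

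First I would prove that $M$ is $(\ell+1)$-neighbourly. Tightness already forces $2$-neighbourliness, since a non-edge $\{u,v\}$ would give $H_0(M[\{u,v\}];\FF)=\FF^{2}$, which cannot inject into $H_0(M;\FF)=\FF$. Inductively, if $M$ is $j$-neighbourly for some $2\le j\le\ell$ and $A$ is a non-face with $|A|=j+1$, then $M[A]=\partial\Delta^{j}\simeq S^{j-1}$ has $\tilde\beta_{j-1}(M[A])=1$; tightness would make this class inject into $H_{j-1}(M;\FF)$, which vanishes by $(\ell-1)$-connectedness. Hence $M$---and every induced subcomplex $M[A]$---contains the complete $\ell$-skeleton on its vertices, so $M[A]\supseteq\skel{\ell}{\Delta^{|A|-1}}$ and $Z_\ell(M[A])=Z_\ell(\skel{\ell}{\Delta^{|A|-1}})$ has dimension $\binom{|A|-1}{\ell+1}$. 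In particular $\dim Z_\ell(M)=\binom{N-1}{\ell+1}$, the denominator in the claimed bound, while a partition $V=A\sqcup B$ with $|A|=\lfloor N/2\rfloor$, $|B|=\lceil N/2\rceil$ produces cycle subspaces of dimensions $\binom{\lfloor N/2\rfloor-1}{\ell+1}$ and $\binom{\lceil N/2\rceil-1}{\ell+1}$---the two numerator factors---meeting only in $0$.

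Second, tightness makes each inclusion $M[A]\hookrightarrow M$ homologically injective, so the long exact sequence of the pair $(M,M[A])$ breaks into short exact sequences and $\beta_i(M)=\beta_i(M[A])+\dim_\FF H_i(M,M[A])$ for all $i$. I would then evaluate the relative term by duality: the closure of the complement of a regular neighbourhood $N_B$ of $M[B]$ retracts onto $M[A]$ while $N_B$ itself retracts onto $M[B]$, so excision together with Lefschetz duality gives $H_i(M,M[A])\cong H^{2\ell+1-i}(M[B])$. Taking $i=\ell+1$ and using Poincar\'e duality $\beta_{\ell+1}(M)=\beta_\ell(M)$ yields the key relation $\beta_\ell(M)=\beta_{\ell+1}(M[A])+\beta_\ell(M[B])$, valid for every partition; equivalently, under the nondegenerate intersection pairing $H_\ell(M;\FF)\times H_{\ell+1}(M;\FF)\to\FF$ the classes carried by $A$ and the classes carried by the disjoint set $B$ are orthogonal. (When $\mathrm{char}(\FF)\neq2$ and $M$ is not $\FF$-orientable the duality step must be handled separately; one reduces to the orientable case or argues with the middle pairing directly.)

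Finally, these ingredients must be assembled into the quadratic bound. Projecting along $\pi\colon Z_\ell(M)\to H_\ell(M;\FF)$, whose kernel is $B_\ell(M)$ of dimension $\binom{N-1}{\ell+1}-\beta_\ell(M)$, tightness identifies $Z_\ell(M[A])\cap B_\ell(M)=B_\ell(M[A])$ and similarly for $B$, so it controls exactly how much of each cycle subspace dies in homology. The target inequality asserts exactly that $\beta_\ell(M)\ge z_Az_B/z$, where $z_A,z_B$ are the dimensions of the two cycle subspaces and $z=\binom{N-1}{\ell+1}=\dim Z_\ell(M)$. I expect this product-over-sum estimate to be the main obstacle: purely formal linear algebra yields only the additive bound $\mathrm{rank}\ge z_A+z_B-z$, and a simple example (two disjoint subspaces contained in $\ker\pi$) shows that no product bound can hold for an abstract surjection. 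Thus the nondegeneracy and block structure furnished by Poincar\'e duality---that the intersection form pairs the $A$-cycles against the $B$-cycles essentially perfectly---must be used quantitatively, and tracking the equality case (for $\ell=1$, vertex links that are connected sums of icosahedra) should fix the correct normalisation.
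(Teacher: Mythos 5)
Your preparatory steps are sound, and they do overlap with how the cited source begins: the induction showing that an $\FF$-tight, $(\ell-1)$-connected $M$ must be $(\ell+1)$-neighbourly is correct (this is K\"uhnel's lemma), hence $Z_\ell(M[A])$ is the cycle space of the complete $\ell$-skeleton on $A$ and has dimension $\binom{|A|-1}{\ell+1}$; and the identity $\beta_\ell(M)=\beta_{\ell+1}(M[A])+\beta_\ell(M[B])$ for complementary induced subcomplexes (tightness splitting the long exact sequence of the pair, plus Alexander--Lefschetz duality; $\FF$-orientability is available by Lemma \ref{lemma:2.5}) is correct. But the proof is not complete, and you say so yourself: the product-over-sum inequality $\beta_\ell \geq z_A z_B/z$ \emph{is} the content of the proposition, and nothing in your setup produces it. Worse, the single-partition framework you fix cannot produce it even in principle: per partition, the linear algebra and duality at your disposal yield only additive estimates of the type $\beta_\ell \geq \binom{a-1}{\ell+1}+\binom{b-1}{\ell+1}-\binom{n-1}{\ell+1}$, or orthogonality statements bounding sums (not products) of dimensions of cycle/boundary subspaces; since $\binom{a-1}{\ell+1}+\binom{b-1}{\ell+1}$ is much smaller than $\binom{n-1}{\ell+1}$ when $a\approx b\approx n/2$, all such bounds are vacuous. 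So the ``main obstacle'' you flag is not a normalisation to be fixed by examining the equality case; it is the theorem itself, and it is missing.

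Note also that this paper does not prove the proposition at all --- it is quoted from \cite{SpOdd} --- so the fair comparison is with that proof, whose machinery this paper reuses in Section 5. The argument there is not a two-block duality argument but an averaging one over induced subcomplexes of \emph{all} sizes: tightness forces the Morse-theoretic equality $\beta_\ell(M)=\mu_\ell(M)$ (the general form of Lemma \ref{lemma:2.6}, from \cite{BaMu}), where $\mu_\ell$ is a weighted sum of $\tilde{\beta}_{\ell-1}$ of induced subcomplexes of vertex links. These are bounded below using neighbourliness (each link is $\ell$-neighbourly, so its $(\ell-1)$-cycle space on any $j$ vertices has dimension $\binom{j-1}{\ell}$, while the boundary spaces are controlled by face numbers, which Dehn--Sommerville relations plus neighbourliness pin down as functions of $n$ alone); the floor/ceiling product form then emerges from binomial identities of the kind used in the proof of Theorem \ref{theorem:5.7}. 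If you want to salvage your approach, the missing quantitative input must come from summing your per-subset information over all vertex subsets, not from a single half-and-half partition.
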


In Section 5, we consider fields of characteristic 2. According to Proposition \ref{prop:BDTh2.24}, every
neighbourly, stacked, closed, triangulated 3-manifold $M$ is $\ZZ_2$-tight. We do not know whether the converse
is true or not. But in this paper we prove that if $M$ is a $\ZZ_2$-tight, closed, triangulated 3-manifold with
$f_0(M) \leq 71$ or $\beta_1(M; \ZZ_2) \leq 188$, then $M$ must be stacked and neighbourly (and therefore
$\binom{f_0(M)-4}{2} = 10\beta_1(M; \ZZ_2)$). We also show that, in general, each vertex link of a $\ZZ_2$-tight,
closed, triangulated 3-manifold must be a connected sum of $I$'s and $S^2_4$'s. Further, we prove that any
$\ZZ_2$-tight, closed, triangulated 3-manifold $M$ satisfies the following upper bound on $f_0(M)$:
\begin{align}
(f_0(M)-4)(617f_0(M)-3861) \leq 15444\beta_1(M; \ZZ_2). \nonumber
\end{align}
Equality holds here if and only if $M$ is locally icosian. In conjunction with the fact that $\binom{f_0(M)-4}{2}
= 10\beta_1(M; \ZZ_2)$ when $f_0(M) \leq 71$, this inequality improves upon the upper bound of Proposition
\ref{prop:Spreer} in case $l=1$. We also prove that, if there is a non-stacked, $\FF$-tight, triangulated
3-manifold $M$, then its integral homology group $H_1(M;\ZZ)$ must have an element of order 2. The results of
Section 5 were largely suggested by extensive machine computations using simpcomp \cite{ES}. Altogether, these
results impose severe restrictions on the topology of $3$-manifolds admitting tight triangulations (cf. Corollary
\ref{coro:5.14}).

In Section 6, we present some examples to show that converses and generalizations of several results proved here
are not true.

\begin{remark} \label{remark:1.10}
{\rm Tight triangulations of $3$-manifolds are not as rare as one might think. Recently, it has been found in
\cite{BDSS2} that $(20m+9)$-vertex tight, triangulated $3$-manifolds exist for all $m\leq 5$. The paper
\cite{BDSS2} lists 76 non-isomorphic tight triangulations with these parameters including Walkup's $9$-vertex
$3$-dimensional Klein bottle. Existence of these examples makes it natural to pose the following conjecture. }
\end{remark}

\begin{conjecture} \label{conj:1.11}
There exist $n$-vertex, tight, closed, triangulated $3$-manifolds for all $n \equiv 9$ $($mod $20)$.
\end{conjecture}

However, all the examples in \cite{BDSS2} are actually stacked. Indeed, in view of Theorems \ref{theorem:5.6} and
\ref{theorem:5.11} of this paper, if non-stacked, tight, triangulated 3-manifolds are to exist, then their
structure and parameters must be extremely restrictive. These results suggest the following conjecture (which we
prove here in odd characteristic).

\begin{conjecture} \label{conj:1.12}
A closed, triangulated $3$-manifold is $\FF$-tight $($if and\,$)$ only if it is $\FF$-orientable, neighbourly and
stacked.
\end{conjecture}

In the Oberwolfach workshop on `Geometric and Algebraic Combinatorics' held in February, 2015, the first author
gave a lecture based on this article. There he posed the following \cite{MFO}. 

\begin{prob}[Bagchi] \label{prob:Ba}
Let $X$ be an $\FF$-tight simplicial complex such that the link in $X$ of some vertex is an $\FF$-homology
$(d-1)$-sphere. Then prove or disprove: $X$ must be a triangulation of a closed $\FF$-homology $d$-manifold. 
\end{prob}

Notice that this is trivial if $d=1$. Theorem \ref{theorem:3.5} provides an affirmative solution of this
problem for $d=2$.

\section{Preliminaries on stacked and tight triangulations}

In this section we recapitulate a few elementary consequences of tightness. For completeness, we include their
proofs. We shall use\,:

\begin{notation} \label{notation1}
{\rm If $x$ is a vertex of a simplicial complex $X$, then $X^x$ and $X_x$ will denote the antistar and the link
(respectively) of $x$ in $X$. Thus,
\begin{align}
X^x & := \{\alpha\in X \, : \, x \not\in\alpha\} = X[V(X)\setminus\{x\}], \nonumber \\
X_x & := \{\alpha\in X : x\not\in\alpha, \alpha\sqcup\{x\}\in X\}. \nonumber
\end{align}
}
\end{notation}

A face $\{u_1, \dots, u_m\}$ in a simplicial complex is also denoted by $u_1u_2\cdots u_m$. If $X$ is a
simplicial complex and $a\not\in X$ is an element then the {\em cone} with apex $a$ and base $X$ is the
simplicial complex $X \cup \{\alpha\cup\{a\} \, : \, \alpha\in X\}$ and is denoted by $a\ast X$. For a simplicial
complex $X$ of dimension $d$, and for $0\leq k\leq d$, the {\em $k$-skeleton} ${\rm skel}_k(X)$ is defined by
\begin{align}
{\rm skel}_k(X) & := \{\alpha\in X \, : \, \dim(\alpha) \leq k\}. \nonumber
\end{align}

\begin{lemma} \label{lemma:2.2}
Every $\FF$-tight simplicial complex is neighbourly.
\end{lemma}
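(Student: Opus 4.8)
The plan is to prove the contrapositive of the statement. Neighbourliness of $X$ is precisely the assertion that every pair of distinct vertices spans an edge, so the negation provides vertices $u \neq v$ in $V(X)$ with $\{u,v\} \notin X$. My goal is to use such a non-edge to exhibit an induced subcomplex that violates condition (b) of $\FF$-tightness, thereby contradicting the hypothesis.

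First I would restrict attention to the induced subcomplex $Y := X[\{u,v\}]$. Since $\{u,v\} \notin X$ by assumption, $Y$ contains only the two $0$-faces $\{u\}$ and $\{v\}$ and no edge joining them; thus $Y$ is a disjoint union of two points. Consequently $H_0(Y;\FF) \cong \FF^2$, a two-dimensional $\FF$-vector space. On the other hand, condition (a) of tightness guarantees that $X$ is connected, so $H_0(X;\FF) \cong \FF$ is one-dimensional.

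The key step is then a dimension count. Condition (b) of $\FF$-tightness forces the inclusion-induced map $H_0(Y;\FF) \to H_0(X;\FF)$ to be injective, but an injective $\FF$-linear map cannot carry a two-dimensional space into a one-dimensional one. This contradiction shows that no non-edge $\{u,v\}$ can exist, so $X$ is neighbourly. I do not anticipate any serious obstacle in this argument; the only point worth flagging is that it suffices to test tightness on a single induced subcomplex on two vertices, and only in homological degree zero, so the full strength of the definition—ranging over all induced subcomplexes and all degrees—is not required here.
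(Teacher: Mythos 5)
Your proof is correct and is essentially identical to the paper's: both take a non-edge $\{u,v\}$, observe that the induced subcomplex on $\{u,v\}$ has $\beta_0 = 2$ while the connected complex $X$ has $\beta_0 = 1$, and conclude that the inclusion-induced map on $H_0$ cannot be injective. No gaps.
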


\begin{proof}
Suppose, if possible, $x\neq y$ are two vertices of an $\FF$-tight simplicial complex $X$ such that $xy$ is not
an edge of $X$. Let $Y$ be the induced subcomplex of $X$ on the set $\{x, y\}$. Then $\beta_0(Y; \FF) = 2 > 1 =
\beta_0(X; \FF)$, so that $H_0(Y; \FF) \to H_0(X; \FF)$ can not be injective. This is a contradiction since $X$
is $\FF$-tight.
\end{proof}

\begin{lemma} \label{lemma:2.3}
Every induced subcomplex of an $\FF$-tight simplicial complex is $\FF$-tight.
\end{lemma}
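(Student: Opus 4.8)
The plan is to exploit the fact that forming induced subcomplexes is a transitive operation, so that the tightness condition for $X$ can be transported directly to any induced subcomplex $Z$. Write $Z = X[A]$ for some $A \subseteq V(X)$. The crucial observation is that if $Y$ is an induced subcomplex of $Z$, say $Y = Z[B]$ with $B \subseteq A$, then in fact $Y = X[B]$; indeed, $Z[B] = \{\alpha \in X : \alpha \subseteq A,\ \alpha \subseteq B\} = \{\alpha \in X : \alpha \subseteq B\} = X[B]$ because $B \subseteq A$. Thus every induced subcomplex of $Z$ is automatically an induced subcomplex of $X$, which is precisely the hypothesis that the $\FF$-tightness of $X$ controls.

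For condition (b) of Definition \ref{def:tight}, let $Y = Z[B]$ be an arbitrary induced subcomplex of $Z$. The inclusion $Y \hookrightarrow X$ factors as the composition $Y \hookrightarrow Z \hookrightarrow X$ of inclusion maps. By functoriality of homology, the induced map $H_{\ast}(Y; \FF) \to H_{\ast}(X; \FF)$ equals the composite
\begin{align}
H_{\ast}(Y; \FF) \longrightarrow H_{\ast}(Z; \FF) \longrightarrow H_{\ast}(X; \FF). \nonumber
\end{align}
Since $Y = X[B]$ is an induced subcomplex of the $\FF$-tight complex $X$, this composite is injective. A composite of $\FF$-linear maps is injective only if its first factor is injective; hence $H_{\ast}(Y; \FF) \to H_{\ast}(Z; \FF)$ is injective, as required.

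For condition (a), namely connectedness of $Z$, I would apply the injectivity statement already established to the induced subcomplex $Y = Z$ itself (the case $B = A$). This gives that $H_0(Z; \FF) \to H_0(X; \FF)$ is injective, so $\beta_0(Z; \FF) \leq \beta_0(X; \FF)$. As $X$ is $\FF$-tight it is connected, whence $\beta_0(X; \FF) = 1$, forcing $\beta_0(Z; \FF) \leq 1$; since $A$ is nonempty, $Z$ carries each vertex of $A$ and is therefore a nonempty complex, so $\beta_0(Z; \FF) = 1$ and $Z$ is connected. There is no real obstacle in this argument: the entire content is the transitivity observation of the first paragraph together with the elementary fact about injectivity of composites, and the only point requiring a word of care is the tacit assumption that the induced subcomplexes under consideration are nonempty.
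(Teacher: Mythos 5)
Your proof is correct, and its core is exactly the paper's argument: the observation that an induced subcomplex of an induced subcomplex of $X$ is again an induced subcomplex of $X$, followed by factoring $H_{\ast}(Y; \FF) \to H_{\ast}(X; \FF)$ as $H_{\ast}(Y; \FF) \to H_{\ast}(Z; \FF) \to H_{\ast}(X; \FF)$ and noting that injectivity of a composite forces injectivity of its first factor. The only divergence is the connectedness step: the paper invokes Lemma \ref{lemma:2.2} ($\FF$-tightness implies neighbourliness, induced subcomplexes of neighbourly complexes are neighbourly, and neighbourly complexes are connected), whereas you deduce connectedness homologically, from injectivity of $H_0(Z; \FF) \to H_0(X; \FF)$ together with $\beta_0(X; \FF) = 1$ and nonemptiness of $Z$. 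Both are sound; your version is marginally more self-contained, since it never needs Lemma \ref{lemma:2.2}, while the paper's reuses a lemma already in hand and yields the slightly stronger combinatorial conclusion that the subcomplex is neighbourly rather than merely connected. The empty-vertex-set caveat you flag is shared by both arguments and is purely a matter of convention.
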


\begin{proof}
Let $Y$ be an induced subcomplex of an $\FF$-tight simplicial complex $X$. By Lemma \ref{lemma:2.2}, $X$ is
neighbourly and hence $Y$ is also neighbourly. So $Y$ is connected. Let $Z$ be an induced subcomplex of $Y$. Then
$Z$ is an induced subcomplex of $X$ also. Since the composition of the linear maps $H_{\ast}(Z; \FF) \to
H_{\ast}(Y; \FF) \to H_{\ast}(X; \FF)$ is injective, the first of them must be injective. Thus, $H_{\ast}(Z; \FF)
\to H_{\ast}(Y; \FF)$ is injective for all induced subcomplexes $Z$ of $Y$. Therefore, $Y$ is $\FF$-tight.
\end{proof}

\begin{lemma} \label{lemma:2.4}
If $X$ is an $\FF$-tight simplicial complex of dimension $d$, then ${\rm skel}_k(X)$ is $\FF$-tight for $1\leq
k\leq d$.
\end{lemma}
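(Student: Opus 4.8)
```latex
The plan is to deduce the $\FF$-tightness of $\skel{k}{X}$ from that of $X$ by comparing their induced
subcomplexes. The crucial observation is that the skeleton operation is compatible with taking induced
subcomplexes: for any vertex set $A \subseteq V(X)$, we have $(\skel{k}{X})[A] = \skel{k}{X[A]}$, since a
face of dimension at most $k$ lies in $X[A]$ precisely when all its vertices lie in $A$. So if $Y =
(\skel{k}{X})[A]$ is an arbitrary induced subcomplex of $\skel{k}{X}$, then $Y = \skel{k}{X[A]}$ is the
$k$-skeleton of the induced subcomplex $X[A]$ of $X$.

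First I would reduce, via Lemma \ref{lemma:2.3}, to controlling the inclusion map at the level of homology.
To show $\skel{k}{X}$ is $\FF$-tight I must check connectedness (immediate, since $X$ is neighbourly by Lemma
\ref{lemma:2.2}, hence so is $\skel{k}{X}$ as long as $k \geq 1$, giving connectedness) and that for every
induced subcomplex $Y$ of $\skel{k}{X}$ the map $H_\ast(Y;\FF) \to H_\ast(\skel{k}{X};\FF)$ is injective.
Writing $Y = \skel{k}{X[A]}$ as above, the inclusion $Y \hookrightarrow \skel{k}{X}$ factors as
$\skel{k}{X[A]} \hookrightarrow \skel{k}{X}$, and this is the $k$-skeleton of the inclusion $X[A]
\hookrightarrow X$. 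I would then analyze the induced map in homology degree by degree.

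The key technical input is that, in degrees $i < k$, the homology of a complex and the homology of its
$k$-skeleton coincide, since $\skel{k}{X}$ and $X$ share the same $i$-chains and $(i+1)$-chains for $i+1 \leq
k$; the only degree where a discrepancy can arise is $i = k$, where $\skel{k}{X}$ has no
$(k+1)$-cells and so $H_k(\skel{k}{X};\FF)$ is a quotient that surjects from the cycles and may be larger
than $H_k(X;\FF)$. I would therefore treat the two ranges separately. For $i < k$ the square
$$
\begin{array}{ccc}
H_i(\skel{k}{X[A]};\FF) & \longrightarrow & H_i(\skel{k}{X};\FF) \\[1mm]
\cong \downarrow & & \downarrow \cong \\[1mm]
H_i(X[A];\FF) & \longrightarrow & H_i(X;\FF)
\end{array}
$$
commutes, the vertical arrows are isomorphisms, and the bottom arrow is injective because $X$ is
$\FF$-tight; hence the top arrow is injective. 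The remaining degree $i = k$ is where the main obstacle lies,
since here the vertical comparison maps are no longer isomorphisms: $k$-cycles that bound in $X$ via
$(k+1)$-chains need not bound in the skeleton. I expect to resolve this by showing that a $k$-cycle of
$\skel{k}{X[A]} = \skel{k}{X}[A]$ which becomes a boundary in $\skel{k}{X}$ must already have been a cycle in
$X[A]$ whose image is a boundary in $X$, and to trace back through the $\FF$-tightness of $X$ in degree $k$
(applied to the induced subcomplex $X[A]$, which is itself $\FF$-tight by Lemma \ref{lemma:2.3}) that it must
bound already in $\skel{k}{X[A]}$. Carefully reconciling the top-degree boundaries between the skeleton and
the full complex is the delicate step; the lower-degree cases are routine once the identity $(\skel{k}{X})[A]
= \skel{k}{X[A]}$ is in hand.
```
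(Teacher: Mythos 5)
Your setup matches the paper's proof almost exactly: the identity $(\skel{k}{X})[A] = \skel{k}{X[A]}$, connectedness via neighbourliness (Lemma \ref{lemma:2.2}), and the commuting-square argument in degrees $i < k$ (where the vertical comparison maps are isomorphisms and the bottom map is injective by $\FF$-tightness of $X$) are precisely what the paper does. The gap is in degree $i = k$, which you defer as ``the delicate step'' with only a sketched plan --- and that plan is misconceived, because the degree-$k$ case is in fact trivial. Since both $Y = \skel{k}{X[A]}$ and $\skel{k}{X}$ have dimension $\leq k$, neither complex has any $(k+1)$-faces, so the boundary groups $B_k(Y)$ and $B_k(\skel{k}{X})$ are both zero. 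Hence $H_k(Y) = Z_k(Y)$ and $H_k(\skel{k}{X}) = Z_k(\skel{k}{X})$, and the map induced by inclusion is simply the inclusion of cycle spaces $Z_k(Y) \subseteq Z_k(\skel{k}{X})$, which is injective. This one observation is exactly how the paper finishes; no appeal to tightness in degree $k$ is needed.

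Your sketched plan, by contrast, starts from a scenario that cannot occur and aims at a conclusion that could never be established. A $k$-cycle of $\skel{k}{X[A]}$ ``which becomes a boundary in $\skel{k}{X}$'' does not exist (other than zero), since there are no $(k+1)$-chains in $\skel{k}{X}$ for it to bound; and your intended endpoint, that such a cycle ``must bound already in $\skel{k}{X[A]}$,'' is unattainable for the same reason, as $B_k(\skel{k}{X[A]}) = 0$. Moreover, tightness of $X$ in degree $k$ could not help here even in principle: it concerns the map $H_k(X[A]) \to H_k(X)$, whose source and target are quotients by boundaries coming from $(k+1)$-chains of $X[A]$ and $X$ --- chains which simply have no counterpart in the skeletons. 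So as written your proof is incomplete at the one step where it needed to close, although the repair is a single line: in top degree, homology equals cycles, and cycles of a subcomplex include injectively into cycles of the ambient complex.
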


\begin{proof}
Since $k\geq 1$ and $X$ is neighbourly by Lemma \ref{lemma:2.2}, it follows that ${\rm skel}_k(X)$ is neighbourly
and hence connected. Let $Y$ be an induced subcomplex of ${\rm skel}_k(X)$. Then $Y = {\rm skel}_k(W)$, where $W$
is an induced subcomplex of $X$. Since $X$ is $\FF$-tight, it follows that, for $0\leq i\leq k-1$, $H_i(Y) =
H_i(W) \to H_i(X) = H_i({\rm skel}_k(X))$ is injective. Clearly, $Z_k(Y) \subseteq Z_k({\rm skel}_k(X))$. Since
both $Y$ and ${\rm skel}_k(X)$ are of dimension $\leq k$, $B_k(Y) = 0 = B_k({\rm skel}_k(X))$. Therefore, $H_k(Y)
\to H_k({\rm skel}_k(X))$ is injective.
\end{proof}

\begin{lemma} \label{lemma:2.5}
Every $\FF$-tight, closed, triangulated manifold is $\FF$-orientable.
\end{lemma}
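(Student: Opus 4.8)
The plan is to extract $\FF$-orientability, i.e.\ the nonvanishing of $H_d(X;\FF)$ for $d=\dim X$, from the injectivity that tightness guarantees for a single induced subcomplex: the antistar $X^x=X[V(X)\setminus\{x\}]$ of an arbitrarily chosen vertex $x$. Recall that all homology here is taken with coefficients in $\FF$. Since $X^x$ is an induced subcomplex, the defining condition of $\FF$-tightness applies to it directly; in particular the inclusion-induced map $H_{d-1}(X^x)\to H_{d-1}(X)$ is injective. I would feed this one piece of information into the long exact homology sequence of the pair $(X,X^x)$ and show that it forces $H_d(X)\cong\FF$.

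First I would identify the two relevant groups using only that $M=|X|$ is a closed topological $d$-manifold. The geometric carrier $|X^x|$ is exactly $|X|$ with the open star of $x$ deleted, and pushing the punctured open star radially outward exhibits $|X^x|$ as a deformation retract of $|X|\setminus\{x\}$. The latter is a connected (as $X$ is connected and $d\geq 1$) non-compact $d$-manifold, so $H_d(X^x)=0$. For the same reason $H_\ast(X,X^x)\cong H_\ast(X,X\setminus\{x\})$ is the local homology of $M$ at the manifold point $x$, hence equals $\FF$ in degree $d$ and vanishes otherwise. I would deliberately carry out both identifications using only the topological manifold hypothesis, avoiding any appeal to the vertex link being a sphere, since the definition of triangulated manifold in force here is purely topological.

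With these inputs, the relevant tail of the long exact sequence of $(X,X^x)$ reads
\[
0=H_d(X^x)\to H_d(X)\xrightarrow{\,q\,} H_d(X,X^x)=\FF \xrightarrow{\,\delta\,} H_{d-1}(X^x)\xrightarrow{\,i_\ast\,} H_{d-1}(X).
\]
Tightness makes $i_\ast$ injective, so $\ker i_\ast=0$; by exactness $\operatorname{im}\delta=\ker i_\ast=0$, whence $\delta=0$. Exactness at the middle term then forces $q$ to be surjective, while exactness at $H_d(X)$ together with $H_d(X^x)=0$ forces $q$ to be injective. Thus $q$ is an isomorphism and $H_d(X)\cong\FF\neq 0$, so $M$ is $\FF$-orientable.

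The conceptual heart of the argument, and the step I would watch most carefully, is the observation that it is injectivity in degree $d-1$ (not degree $d$) that pins down the top homology: tightness kills the connecting homomorphism $\delta$, and the vanishing of $\delta$ is precisely what prevents the fundamental class from being lost. The only genuinely technical point is to justify the two homological identifications purely topologically, and I expect this to be the main obstacle to a fully rigorous write-up; it is nonetheless standard once one invokes the local homology of a topological manifold together with the vanishing of the top homology of a connected non-compact manifold.
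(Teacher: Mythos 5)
Your proof is correct and takes essentially the same route as the paper's: both fix a vertex $x$, apply tightness to the single induced subcomplex $X^x$ to get injectivity of $H_{d-1}(X^x;\FF)\to H_{d-1}(X;\FF)$, and feed that into an exact sequence to force the connecting map onto a copy of $\FF$ to be an isomorphism, whence $H_d(X;\FF)=\FF$. The only (cosmetic) difference is that the paper runs the Mayer--Vietoris sequence for $X = X^x \cup (x\ast X_x)$, identifying $H_{d-1}(X_x;\FF)=\FF$ via the link, whereas you run the long exact sequence of the pair $(X,X^x)$, identifying $H_d(X,X^x;\FF)=\FF$ via local homology --- the two sequences correspond under excision, and your version additionally treats ${\rm char}(\FF)=2$ uniformly rather than quoting it as known.
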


\begin{proof}
Let $X$ be an
$\FF$-tight, closed, triangulated  $d$-manifold. We can assume that $d\geq 2$. Since $X$ is a connected, closed,
triangulated $d$-manifold, it is easy to see ab initio that, for any proper subcomplex $Y$ of $X$, $H_d(Y; \FF) =
0$. Now fix a vertex $x$ of $X$, and consider the induced subcomplex $X^x$ of $X$ on the complement of $x$. Then
$X = X^x \cup (x \ast X_x)$ and $X^x \cap (x \ast X_x) = X_x$. Since $H_d(X^x; \FF) = 0$, $H_{d-1}(x \ast X_x;
\FF)=0$, $H_{d-1}(X_x; \FF) = \FF$ and $H_{\ast}(X^x; \FF) \to H_{\ast}(X; \FF)$ is injective, it follows from
the exactness of the Mayer-Vietoris sequence
\begin{align}
\cdots \to & H_d(X^x; \FF) \oplus H_d(x \ast X_x; \FF) \to H_d(X; \FF) \to H_{d-1}(X_x; \FF)
\nonumber \\
  \to & H_{d-1}(X^x; \FF) \oplus H_{d-1}(x \ast X_x; \FF) \to H_{d-1}(X; \FF) \to \cdots \nonumber
\end{align}
that $H_d(X; \FF) = \FF$.
\end{proof}

Let $X$ be a simplicial complex of dimension $d$. In \cite{BDStellated}, the first two authors have defined the sigma vector
$(\sigma_0, \sigma_1, \dots, \sigma_d)$ of $X$ with respect to a field $\FF$ as
\begin{align}
\sigma_i = \sigma_i(X; \FF) & := \sum_{A\subseteq V(X)} \tilde{\beta}_i(X[A])\left/\binom{f_0(X)}{\#(A)}\right.,
\, 0\leq i \leq d,  \nonumber
\end{align}
and $\sigma_i(X;\FF) = 0$ for $i>\dim(X)$. Here $\#(A)$ denotes the number of vertices in $A$. Also, we have
adopted the convention that $\widetilde{\beta}_0(\{\emptyset\}) = - 1$ and $\widetilde{\beta}_i(\{\emptyset\}) =
0$ if $i > 0$. Note that, in consequence, $\sigma_0(X)$ may be a negative rational number. In this paper, we
introduce the following normalization of the sigma vector which will be referred to as the {\em sigma-star}
vector.
\begin{align}
\sigma_i^{\ast} = \sigma_i^{\ast}(X; \FF) & :=  \frac{\sigma_i(X; \FF)}{1+f_0(X)},  \, \, i \geq 0. \nonumber
\end{align}
(See Theorem \ref{theorem:5.7} for a justification of this normalization.) In \cite{BaMu}, the first author
introduced the mu-vector $(\mu_0, \dots, \mu_d)$ with respect to $\FF$ of a $d$-dimensional simplicial complex
as follows.
\begin{align}
\mu_0 = \mu_0(X; \FF) & :=  \sum_{x\in V(X)} \frac{1}{1+f_0(X_x)},   \nonumber \\
\mu_i = \mu_i(X; \FF) & := \delta_{i1}\mu_0(X; \FF) +  \sum_{x\in V(X)} \sigma^{\ast}_{i-1}(X_x; \FF),  \, \, 1
\leq i \leq d. \nonumber
\end{align}
Here $\delta_{i1}$ denotes Kronecker's symbol. Thus, $\delta_{i1}=1$ if $i=1$, and $=0$ otherwise. Notice that
$\mu_1(X; \FF)$ is independent of the field $\FF$. Therefore, we will write $\mu_1(X)$ for $\mu_1(X; \FF)$. We
have the following

\begin{lemma} \label{lemma:2.6}
Let $M$ be an $\FF$-orientable, neighbourly, closed, triangulated $3$-manifold. Then $\beta_1(M; \FF) \leq
\mu_1(M)$. Equality holds here if and only if $M$ is $\FF$-tight.
\end{lemma}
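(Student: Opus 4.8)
The plan is to compute $\mu_1(M)$ directly from the definition and to exhibit $\mu_1(M)-\beta_1(M;\FF)$ as a manifestly nonnegative quantity that vanishes exactly when $M$ is $\FF$-tight. First I would exploit neighbourliness: since every pair of vertices spans an edge, each link $M_x$ is a triangulated $2$-sphere on $f_0(M)-1$ vertices, so $f_0(M_x)=n-1$ with $n:=f_0(M)$. This fixes the weights, $\sigma^{\ast}_0(M_x;\FF)=\sigma_0(M_x;\FF)/n$, and lets one evaluate $\mu_0(M)$; by definition
\begin{align}
\mu_1(M) &= \mu_0(M)+\sum_{x\in V(M)}\sigma^{\ast}_0(M_x;\FF). \nonumber
\end{align}
Two structural facts about a triangulated $2$-sphere $S$ drive the later steps and are worth recording: only the full vertex set carries $2$-homology, so $\sigma_2(S)=1$; and averaging the reduced Euler characteristic over all induced subcomplexes (using $\sum_{A\supseteq\tau}\binom{n}{|A|}^{-1}=(n+1)/(|\tau|+1)$) gives $\sigma_0(S)=\sigma_1(S)$. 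In particular $\mu_3(M)=\mu_0(M)$, which matches $\beta_3(M;\FF)$ by $\FF$-orientability, so the top and bottom degrees are automatically balanced and the whole content of the lemma sits in degree one.

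For the inequality I would run the Mayer--Vietoris argument already used in the proof of Lemma \ref{lemma:2.5}, but on induced subcomplexes rather than on $M$ itself, and then average. For an induced subcomplex $Z=M[B]$ and a vertex $x\in B$ the splitting $Z=Z^x\cup(x\ast Z_x)$ with $Z^x\cap(x\ast Z_x)=Z_x$ produces a connecting homomorphism whose degree-one contribution is governed by $\tilde\beta_0(M_x[B\setminus\{x\}])$: each time an induced piece of a link falls apart it forces a $1$-cycle of $M$ that the corresponding subcomplex cannot bound. Summing these local estimates over all $x$ and all $B$ with the weights $\binom{n}{|B|}^{-1}$ that define $\sigma$ reassembles exactly $\sum_x\sigma^{\ast}_0(M_x;\FF)$ on one side and a lower bound for $\beta_1(M;\FF)$ on the other, giving $\beta_1(M;\FF)\le\mu_1(M)$. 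Equivalently one may simply quote the general lower bound of \cite{BaMu} and specialise it to $i=1$.

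The hard part will be the equality clause, since tightness (Definition \ref{def:tight}) demands injectivity of $H_\ast(Y)\to H_\ast(M)$ for every induced subcomplex $Y$ and in every degree, whereas $\beta_1=\mu_1$ is a single scalar identity. The key point is that every summand in the previous step is nonnegative, so equality in the averaged bound forces equality in each local Mayer--Vietoris estimate; unwinding this yields injectivity of $H_1(M[B])\to H_1(M)$ for all $B\subseteq V(M)$. The remaining degrees then come for free: degree $0$ from neighbourliness (Lemma \ref{lemma:2.2}), degree $3$ from the vanishing of $H_3$ on proper induced subcomplexes (as in Lemma \ref{lemma:2.5}), and degree $2$ from the degree-one statement together with $\FF$-orientability and Poincar\'e duality ($\beta_1=\beta_2$). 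Conversely, tightness fed back into the computation---via Lemma \ref{lemma:2.3}, each induced subcomplex is itself tight---collapses every inequality to an equality and returns $\beta_1=\mu_1$. I expect the genuine obstacle to be the bookkeeping that upgrades ``equality of two averages'' to ``injectivity for each individual induced subcomplex'', together with the verification that the degree-two condition is subsumed by duality rather than imposing an independent constraint.
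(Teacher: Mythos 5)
Your plan for the inequality $\beta_1\le\mu_1$, and for the direction ``$\FF$-tight $\Rightarrow$ equality'', is sound: both are exactly what Corollary 1.8 of \cite{BaMu} provides, and your averaged Mayer--Vietoris sketch is essentially how that result is proved. The genuine gap is in the converse, and it sits precisely where you yourself suspected: degree two. Tightness (Definition \ref{def:tight}) demands that $H_2(Y;\FF)\to H_2(M;\FF)$ be injective for \emph{every} induced subcomplex $Y$, and you claim this is ``subsumed'' by $\FF$-orientability and Poincar\'e duality $\beta_1(M)=\beta_2(M)$. But Poincar\'e duality is a numerical statement about $M$ alone; it gives no control over $\ker\bigl(H_2(M[A];\FF)\to H_2(M;\FF)\bigr)$ for a proper subset $A\subseteq V(M)$. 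The paper's mechanism for degree two is different, and it is the one essential input your sketch has no counterpart for: a duality theorem for the mu-vector \emph{itself}, $\mu_2=\mu_1$ and $\mu_3=\mu_0$ (Theorem 1.6 of \cite{BaMu}). Combining that with $\mu_0=\beta_0=1$ (neighbourliness), with Poincar\'e duality for the Betti numbers, and with the hypothesis $\mu_1=\beta_1$, the paper gets $\mu_i=\beta_i$ for all four values of $i$, and only then applies Corollary 1.8 of \cite{BaMu}, whose equality criterion for tightness requires equality in \emph{all} degrees simultaneously. For the same reason, your intermediate claim that the single scalar identity $\mu_1=\beta_1$ already forces $H_1(M[B];\FF)\to H_1(M;\FF)$ to be injective for every $B$ is not mere bookkeeping: it is a per-degree refinement of Corollary 1.8 that is nowhere proved in your sketch (and is not needed on the paper's route).

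Separately, one of your two ``structural facts'' is false, and ironically the corrected version is exactly the missing mu-duality. For a triangulated $2$-sphere $S$, averaging the reduced Euler characteristic over induced subcomplexes gives $\sigma_0(S)-\sigma_1(S)+\sigma_2(S)=0$, i.e.\ $\sigma_1(S)=\sigma_0(S)+1$, \emph{not} $\sigma_1(S)=\sigma_0(S)$: indeed $\sigma_1(S^{\hspace{.15mm}2}_4)=0$ (no induced subcomplex of $S^{\hspace{.15mm}2}_4$ has $1$-homology), whereas $\sigma_0(S^{\hspace{.15mm}2}_4)=-1$ (equivalently $\sigma_0^{\ast}(S^{\hspace{.15mm}2}_4)=-1/5$, as used in the proof of Corollary \ref{coro:5.10}). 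Your identity would yield $\mu_2=\sum_x\sigma_1^{\ast}(M_x)=\mu_1-\mu_0=\beta_2-1<\beta_2$ under your hypotheses, contradicting the Morse inequality $\beta_2\le\mu_2$. The correct identity yields $\mu_2=\mu_1-\mu_0+1=\mu_1$ (since $\mu_0=1$), which together with your correct observation $\mu_3=\mu_0$ recovers Theorem 1.6 of \cite{BaMu} in this special case; with that in hand you can finish exactly as the paper does, concluding $\mu_2=\mu_1=\beta_1=\beta_2$ and $\mu_3=\mu_0=1=\beta_3=\beta_0$, whence $M$ is $\FF$-tight by Corollary 1.8 of \cite{BaMu}. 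So your outline is one Euler-characteristic computation away from a correct proof; as written, the degree-two step is unjustified.
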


\begin{proof}
The inequality (as well as the fact that  equality holds if $M$ is $\FF$-tight) is a special case of
\cite[Corollary 1.9]{BaMu}. Suppose $\mu_1 = \beta_1$. Since $M$ is 2-neighbourly, it also satisfies $\mu_0 = 1 =
\beta_0$. By \cite[Theorem 1.7]{BaMu}, $\mu_2 = \mu_1$ and $\mu_3=\mu_0$. Also, since $M$ is $\FF$-orientable,
Poincar\'{e} duality implies $\beta_2=\beta_1$ and $\beta_3 = \beta_0$. Therefore, $\mu_2=\beta_2$ and
$\mu_3=\beta_3$. Hence \cite[Corollary 1.9]{BaMu} implies that $M$ is $\FF$-tight.
\end{proof}

Recall that a closed, triangulated $d$-manifold $X$ is called {\em orientable} if $H_d(X; \ZZ) \neq 0$. It
follows from the universal coefficient theorem that, for a field $\FF$ of odd characteristic, a closed,
triangulated manifold is $\FF$-orientable if and only if it is orientable.

\begin{corollary} \label{coro:2.7}
Let $p$ be a  prime and let $M$ be an orientable, closed, triangulated $3$-manifold. If $M$ is $\ZZ_p$-tight but
not $\QQ$-tight, then $p$ divides the order of the torsion subgroup of $H_1(M; \ZZ)$.
\end{corollary}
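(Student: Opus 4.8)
The plan is to compare the two first Betti numbers $\beta_1(M;\QQ)$ and $\beta_1(M;\ZZ_p)$, show they differ, and then read off the divisibility claim from the universal coefficient theorem. The bridge between the two coefficient fields is the field-independent invariant $\mu_1(M)$ introduced above, so the whole argument hinges on Lemma \ref{lemma:2.6}.

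First I would record the combinatorial and orientability consequences of the hypotheses. Since $M$ is $\ZZ_p$-tight, Lemma \ref{lemma:2.2} shows that $M$ is neighbourly; as neighbourliness is a purely combinatorial property, $M$ is neighbourly irrespective of the coefficient field. Since $M$ is orientable it is $\FF$-orientable for every field $\FF$ (by the universal coefficient theorem, as noted just before the corollary, using that $H_2(M;\ZZ)$ is torsion-free for an orientable closed $3$-manifold), in particular both $\QQ$-orientable and $\ZZ_p$-orientable. Hence the hypotheses of Lemma \ref{lemma:2.6} are satisfied for both $\FF = \QQ$ and $\FF = \ZZ_p$.

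Next I would invoke Lemma \ref{lemma:2.6} twice. For $\FF = \ZZ_p$, the $\ZZ_p$-tightness of $M$ yields the equality $\beta_1(M;\ZZ_p) = \mu_1(M)$. For $\FF = \QQ$, the lemma gives $\beta_1(M;\QQ) \leq \mu_1(M)$, with equality if and only if $M$ is $\QQ$-tight; since by hypothesis $M$ is not $\QQ$-tight, this inequality is strict. Using that $\mu_1(M)$ does not depend on the field, these combine to the strict inequality $\beta_1(M;\QQ) < \beta_1(M;\ZZ_p)$. To convert this into the divisibility statement, write $H_1(M;\ZZ) \cong \ZZ^b \oplus T$, where $T$ is the (finite) torsion subgroup, legitimate because $M$ is compact. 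As $H_0(M;\ZZ) = \ZZ$ is free, the $\mathrm{Tor}$ term in the universal coefficient sequence vanishes and $H_1(M;\FF) \cong H_1(M;\ZZ) \otimes_{\ZZ} \FF$ for every field $\FF$. Thus $\beta_1(M;\QQ) = b$ while $\beta_1(M;\ZZ_p) = b + \dim_{\ZZ_p}(T \otimes_{\ZZ} \ZZ_p)$, and the latter dimension equals the number of cyclic summands of $T$ of order a power of $p$. The strict inequality $b < \beta_1(M;\ZZ_p)$ forces this dimension to be positive, so $T$ has a cyclic summand of $p$-power order, whence $p$ divides $|T|$, as claimed.

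The substance of the argument is entirely carried by the machinery already quoted: the equality half of Lemma \ref{lemma:2.6} together with the field-independence of $\mu_1$ is exactly what permits a comparison across the two coefficient fields, and this is the one genuinely non-routine step. Once $\beta_1(M;\QQ) < \beta_1(M;\ZZ_p)$ is in hand, the rest is a standard application of the universal coefficient theorem. The only points requiring mild care are verifying the $\FF$-orientability hypothesis of Lemma \ref{lemma:2.6} over both fields and checking that the $\mathrm{Tor}$ contribution vanishes, both of which are immediate since $H_0(M;\ZZ)$ is free and $M$ is orientable.
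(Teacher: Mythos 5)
Your proposal is correct and follows essentially the same route as the paper: both apply Lemma \ref{lemma:2.6} over $\QQ$ (strict inequality, since $M$ is not $\QQ$-tight) and over $\ZZ_p$ (equality, since $M$ is $\ZZ_p$-tight), use the field-independence of $\mu_1$ to conclude $\beta_1(M;\QQ) < \beta_1(M;\ZZ_p)$, and finish with the universal coefficient theorem. The only difference is that you spell out the verification of the hypotheses of Lemma \ref{lemma:2.6} and the Tor/tensor bookkeeping that the paper leaves implicit.
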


\begin{proof}
The hypothesis and Lemma \ref{lemma:2.6} imply that $\beta_1(M; \QQ) < \mu_1(M) = \beta_1 (M; \ZZ_p)$. Hence the
result follows from the universal coefficient theorem.
\end{proof}


The following lemma is immediate from the definition of tightness.

\begin{lemma} \label{lemma:2.8}
{\rm (a)} A simplicial complex is tight with respect to a field of characteristic $p$ if and only if it is
$\ZZ_p$-tight. {\rm (b)} A simplicial complex is tight with respect to a field of characteristic zero if and only
if it is $\ZZ_p$-tight for all primes $p$.
\end{lemma}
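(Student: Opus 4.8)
The plan is to isolate the only field-dependent ingredient of tightness---the homology groups and the inclusion-induced maps---and to show that these depend on $\FF$ only through its characteristic; connectedness is field-independent, so nothing else needs attention. The tool is base change from the prime field. If $\FF_0\subseteq\FF$ is the prime field, then $C_{\ast}(X;\FF)\cong C_{\ast}(X;\FF_0)\otimes_{\FF_0}\FF$, and since $\FF$ is free (hence flat) over $\FF_0$, homology commutes with $-\otimes_{\FF_0}\FF$, giving $H_{\ast}(X;\FF)\cong H_{\ast}(X;\FF_0)\otimes_{\FF_0}\FF$ naturally in $X$. A linear map of $\FF_0$-vector spaces is injective if and only if it stays injective after $-\otimes_{\FF_0}\FF$ (a field extension is faithfully flat), so $H_{\ast}(Y;\FF)\to H_{\ast}(X;\FF)$ is injective exactly when $H_{\ast}(Y;\FF_0)\to H_{\ast}(X;\FF_0)$ is. Hence $X$ is $\FF$-tight iff it is $\FF_0$-tight. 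Taking $\FF_0=\ZZ_p$ proves (a); taking $\FF_0=\QQ$ shows that tightness over any characteristic-zero field is the same as $\QQ$-tightness, and reduces (b) to the assertion that $X$ is $\QQ$-tight if and only if it is $\ZZ_p$-tight for every prime $p$.

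For the implication ``$\ZZ_p$-tight for all $p$ $\Rightarrow$ $\QQ$-tight'' I would argue integrally. If some induced map $H_n(Y;\ZZ)\to H_n(X;\ZZ)$ had a kernel class $\alpha$ of infinite order, then for all but finitely many primes $p$ one has $\alpha\notin pH_n(Y;\ZZ)$, so the reduction of $\alpha$ is a nonzero element of $H_n(Y;\ZZ)/p\subseteq H_n(Y;\ZZ_p)$ that maps to zero in $H_n(X;\ZZ_p)$, contradicting $\ZZ_p$-tightness. Thus every such kernel is torsion, i.e.\ every induced map is rationally injective, which is $\QQ$-tightness.

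The converse, ``$\QQ$-tight $\Rightarrow$ $\ZZ_p$-tight for every $p$,'' is the step I expect to be the main obstacle, precisely because $\QQ$-homology is blind to torsion whereas $H_{\ast}(-;\ZZ_p)$ is not. For a fixed $p$ and induced $Y$ I would feed the natural universal-coefficient sequence
\begin{align}
0\to H_n(Y;\ZZ)\otimes\ZZ_p\to H_n(Y;\ZZ_p)\to \mathrm{Tor}(H_{n-1}(Y;\ZZ),\ZZ_p)\to 0 \nonumber
\end{align}
into the inclusion ladder over $Y\hookrightarrow X$ and apply the short five lemma, reducing injectivity of $H_n(Y;\ZZ_p)\to H_n(X;\ZZ_p)$ to injectivity of $H_n(Y;\ZZ)\otimes\ZZ_p\to H_n(X;\ZZ)\otimes\ZZ_p$ together with injectivity of the $p$-torsion map $H_{n-1}(Y;\ZZ)[p]\to H_{n-1}(X;\ZZ)[p]$. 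Neither follows from $\QQ$-tightness of $Y$ alone: a purely integral phenomenon such as an induced cycle carrying an even multiple of a primitive class (compare the Möbius band, or $\mathbb{RP}^2$, whose mod-$2$ classes inject) would break the first while remaining rationally injective. The real content must therefore be that $\QQ$-tightness of \emph{every} induced subcomplex (Lemma \ref{lemma:2.3}) forces all such torsion and all non-primitivity to be witnessed rationally by some smaller induced subcomplex---exactly the mechanism that already prevents $\mathbb{RP}^2$ from being $\QQ$-tight. Turning this detection principle into a proof that no $\QQ$-tight complex can house such a mod-$p$ failure is where I expect the work to concentrate.
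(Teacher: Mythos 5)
Your part (a) is correct and complete: identifying $C_{\ast}(X;\FF)\cong C_{\ast}(X;\FF_0)\otimes_{\FF_0}\FF$ and using that a field extension is faithfully flat is exactly the right mechanism, and it also correctly reduces part (b) to comparing $\QQ$-tightness with $\ZZ_p$-tightness for all $p$. Your argument for the implication ``$\ZZ_p$-tight for all $p$ $\Rightarrow$ $\QQ$-tight'' is also sound (and, as your own argument shows, it only needs $\ZZ_p$-tightness for infinitely many $p$). Note that the paper offers no written proof to compare against: its entire justification is the sentence that the lemma is ``immediate from the definition.'' So for these three implications your proposal supplies strictly more than the source does. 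But the fourth implication --- $\QQ$-tight $\Rightarrow$ $\ZZ_p$-tight for \emph{every} prime $p$ --- is left unproven in your proposal, and you say so yourself; as a proof of the lemma as stated, this is a genuine gap, not a stylistic omission.

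To make the gap concrete: what your universal-coefficient analysis actually delivers is the weaker statement that a $\QQ$-tight complex is $\ZZ_p$-tight for all but finitely many $p$. Indeed, for a fixed induced $Y$, rational injectivity of $H_n(Y;\ZZ)\to H_n(X;\ZZ)$ forces mod-$p$ injectivity for every $p$ that avoids the torsion orders of the integral homology of $Y$ and $X$ and of the relevant cokernels; since $X$ is finite, there are finitely many induced subcomplexes and hence finitely many exceptional primes in total. The content of (b) in the forward direction is precisely that a $\QQ$-tight complex has \emph{no} exceptional primes: no induced $Y$ carries $p$-torsion whose mod-$p$ shadow dies in $X$, and no inclusion-induced map on free parts has image divisible by $p$. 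That is a structural theorem about $\QQ$-tight complexes, not formal algebra. It is provable in special situations by the kind of bootstrapping you gesture at --- for instance, in a $\QQ$-tight complex every induced empty simplex boundary must carry nonvanishing rational homology, from which one deduces that a rationally acyclic $\QQ$-tight complex is a full simplex, ruling out cone-type counterexamples --- but neither your proposal nor the paper carries out the general argument. Your diagnosis of where the difficulty sits is accurate, and it in fact shows that the paper's ``immediate'' is an overstatement for exactly this direction; still, identifying the obstacle is not the same as overcoming it, so the proposal stands as an incomplete proof.
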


For any non-empty finite set $\alpha$, $\overline{\alpha}$ will denote the simplicial complex whose faces are all
the subsets of $\alpha$. Thus, if $\#(\alpha) = d+1$, $\overline{\alpha}$ is the standard triangulation of the
$d$-ball (namely, it is the face complex of the geometric $d$-simplex). If $\#(\alpha) = d+2$, then the boundary
$\partial \overline{\alpha}$ of $\overline{\alpha}$ is the standard triangulation of the $d$-sphere; it will be
denoted by $S^{\,d}_{d + 2}$. (More generally, $S^{\,d}_n$ usually denotes an $n$-vertex triangulation of the
$d$-sphere.) From the definition of a stacked sphere, one can deduce the following (this also follows from
\cite[Lemmas 4.3 (b) \& 4.8 (b)]{BDLBT}).

\begin{lemma} \label{lemma:stacked_sphere}
A simplicial complex $S$ is a stacked $d$-sphere if and only if $S$ is a connected sum of finitely many copies of
the $(d+2)$-vertex standard sphere $S^{\,d}_{d + 2}$.
\end{lemma}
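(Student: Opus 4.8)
The plan is to prove both implications by induction, exploiting the correspondence between a stacked $d$-sphere $S$ and a stacked $(d+1)$-ball $\Delta$ with $\partial\Delta = S$, and matching each elementary stacking move on $\Delta$ with a connected sum with $S^d_{d+2}$ on the boundary. For the easy direction I would induct on the number $k$ of copies. A single copy is $\partial\overline{\gamma}$ for a $(d+1)$-simplex $\gamma$, which bounds the (trivially stacked) ball $\overline{\gamma}$; and if $S = S' \# S^d_{d+2}$ with $S' = \partial\Delta'$ for a stacked ball $\Delta'$ by the inductive hypothesis, then the connected sum is taken along a facet $\tau$ of $S'$, which is a boundary $d$-face of $\Delta'$. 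Gluing a fresh $(d+1)$-simplex $\overline{\gamma} = v \ast \overline{\tau}$ onto $\Delta'$ along $\overline{\tau}$ yields $\Delta := \Delta' \cup \overline{\gamma}$ with $\partial\Delta = \partial\Delta' \# \partial\overline{\gamma} = S' \# S^d_{d+2} = S$; a direct check that every face of $\Delta$ of dimension $\le d-1$ still lies in $\partial\Delta$ shows $\Delta$ is stacked, so $S$ is a stacked sphere.

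For the converse, let $\Delta$ be a stacked $(d+1)$-ball with $S = \partial\Delta$, and induct on the number $F$ of facets. The key structural input is that the dual graph $G$ of $\Delta$ (nodes are facets, edges are interior $d$-faces) is a tree. I would obtain this from a relative homology count: since $\Delta$ is stacked, its only interior faces have dimension $d$ or $d+1$, so the relative chain complex $C_\ast(\Delta,\partial\Delta;\FF)$ is concentrated in degrees $d,d+1$ with $C_{d+1} = \FF^{F}$ and $C_d = \FF^{R}$, where $R$ is the number of interior ridges. As $\Delta$ is a ball we have $H_{d+1}(\Delta,\partial\Delta;\FF) = \FF$ and $H_d(\Delta,\partial\Delta;\FF) = 0$, which force $\operatorname{rank}(\partial_{d+1}) = F-1 = R$; combined with connectivity of $G$ (a triangulated ball is a pseudomanifold), this makes $G$ a tree. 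For $F\ge 2$ I then pick a leaf $\gamma$, with unique interior ridge $\tau = \gamma\setminus\{v\}$.

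The main obstacle, and the step I would spend the most care on, is to show that the apex $v$ opposite $\tau$ lies in no facet other than $\gamma$, so that the decomposition is an honest connected sum. Here I would compute the link in the boundary sphere: the $d+1$ boundary ridges of $\gamma$ (all ridges except $\tau$) contribute to $\lk{\partial\Delta}{v}$ exactly the facets of $\partial\overline{\tau} = S^{d-1}_{d+1}$, so $\partial\overline{\tau} \subseteq \lk{\partial\Delta}{v}$; since $\partial\overline{\tau}$ is a closed $(d-1)$-pseudomanifold sitting inside the connected $(d-1)$-sphere $\lk{\partial\Delta}{v}$ of the same dimension, a facet-adjacency argument (each $(d-2)$-face is already incident to two facets of $\partial\overline{\tau}$) gives equality. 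For $d\ge 2$ every edge at $v$ has codimension $\ge 2$ and hence lies in $\partial\Delta$, so every neighbour of $v$ is a vertex of $\lk{\partial\Delta}{v} = \partial\overline{\tau}$, i.e. a vertex of $\tau$; therefore every face through $v$ is contained in $\{v\}\cup\tau = \gamma$, proving that $v$ meets only $\gamma$ (the case $d=1$ of cycles being elementary).

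Finally I would set $\Delta' := \Delta^{v}$, the antistar of $v$, which by the previous step equals the subcomplex generated by all facets except $\gamma$ and satisfies $\Delta = \Delta' \cup \overline{\gamma}$ with $\Delta'\cap\overline{\gamma} = \overline{\tau}$. A short argument shows $\Delta'$ is again a stacked $(d+1)$-ball: it has $F-1$ facets, its dual graph is $G$ with the leaf deleted, it is a ball (obtained by removing the open star of a boundary vertex whose closed star is $\overline{\gamma}$), and any face of dimension $\le d-1$ of $\Delta'$ lying in $\partial\Delta$ also lies in $\partial\Delta'$. Moreover $\partial\Delta = \partial\Delta' \# \partial\overline{\gamma} = \partial\Delta' \# S^d_{d+2}$. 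By the inductive hypothesis $\partial\Delta'$ is a connected sum of copies of $S^d_{d+2}$, hence so is $S = \partial\Delta$, completing the induction.
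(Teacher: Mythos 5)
You should know at the outset that the paper contains no proof of this lemma to compare against --- it is dispatched by citing Lemmas 4.3(b) and 4.8(b) of \cite{BDLBT} --- so the only question is whether your argument stands on its own. Much of it does. The forward direction is fine, and in the converse your two central ideas are correct and well chosen: stackedness makes $C_\ast(\Delta,\partial\Delta;\FF)$ concentrated in degrees $d$ and $d+1$, and together with $H_{d+1}(\Delta,\partial\Delta;\FF)\cong\FF$, $H_d(\Delta,\partial\Delta;\FF)=0$ this does force the dual graph to be a tree; likewise the identification $\lk{\partial\Delta}{v}=\partial\overline{\tau}$ for a leaf apex $v$, hence $\st{\Delta}{v}=\overline{\gamma}$, is correct, granted the standard facts that vertex links of triangulated closed manifolds are pure, strongly connected, with every $(d-2)$-face in exactly two $(d-1)$-faces (you call the link a ``$(d-1)$-sphere'', which it need not be for non-combinatorial triangulations, but only these pseudomanifold properties are actually used).

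The genuine gap is the assertion that $\Delta'=\Delta^v$ ``is a ball (obtained by removing the open star of a boundary vertex whose closed star is $\overline{\gamma}$)''. That parenthesis restates the claim rather than proving it, and this is precisely where the topology must re-enter the induction. Two reasons this is not a routine omission. First, the properties of $\Delta'$ you actually verify --- pure, every $d$-face in at most two facets, tree dual graph, all faces of dimension $\leq d-1$ in the boundary --- do \emph{not} imply ball-ness, nor even that the apex argument can be repeated at the next step: the $2$-complex with triangles $vab$, $abc$, $bcd$, $cdv$ satisfies all of them, yet its carrier is a pinched disc, its boundary complex is a wedge of two circles, and the apex $v$ of the leaf $vab$ recurs in a second facet. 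Second, the topological input is not elementary in the paper's setting, where ``triangulated manifold'' is purely topological: you may not assume $|\Delta|$ is a \emph{PL} ball (PL-ness of stacked balls is essentially a consequence of the lemma being proved), so the PL complementary-cell/inverse-shelling theorems are unavailable, and a direct topological argument would need a collar of $|\tau|$ inside $|\Delta'|$ --- i.e.\ it would need to know $|\Delta'|$ is a manifold near $|\tau|$, which is again the claim. The fix is to stop tracking the $(d+1)$-dimensional topology and induct on the boundary instead: take as hypotheses that $\Delta$ is pure, every $d$-face lies in at most two facets, every face of dimension $\leq d-1$ lies in $\partial\Delta$, the dual graph is a tree, and $|\partial\Delta|$ is a closed manifold. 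All of these pass to $\Delta'$ by your own arguments, the last one because $\partial\Delta'=(\partial\Delta)^v\cup\overline{\tau}$ is obtained from $\partial\Delta$ by replacing $\st{\partial\Delta}{v}=v\ast\partial\overline{\tau}$ with the other cone $\overline{\tau}$ over the same complex $\partial\overline{\tau}$ (an inverse bistellar $0$-move), so coning the identity of $|\partial\overline{\tau}|$ gives $|\partial\Delta'|\cong|\partial\Delta|$. With this reformulation $\partial\Delta=\partial\Delta'\,\#\,\partial\overline{\gamma}$, the hypotheses are restored for $\Delta'$, and the induction closes without ever mentioning $|\Delta'|$.
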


Let $X$ be a closed, triangulated manifold and let $\sigma$ and $\tau$ be facets of $X$. For a bijection $\psi:
\sigma \to \tau$, let $X^\psi$ be the simplicial complex obtained from $X \setminus \{ \sigma, \tau\}$ by
identifying $v$ and $\psi(v)$ for $v \in \sigma$. If ${\rm lk}_X(v) \cap {\rm lk}_X(\psi(v))=\{\emptyset\}$ for
each vertex $v \in \sigma$, then $X^\psi$ is a triangulated manifold. If $\sigma$ and $\tau$ belong to different
connected components, say $\sigma\in X_1$, $\tau \in X_2$ and $X = X_1\sqcup X_2$, then $X^{\psi}$ is said to be
the {\em connected sum} of $X_1$ and $X_2$ and is denoted by $X_1\#_{\psi} X_2$. If $\sigma$ and $\tau$ belong to
the same connected component of $X$, then $X^\psi$ is said to be obtained from $X$ by a {\em combinatorial handle
addition}. For $d\geq 2$, we recursively define the class $\mathcal{H}^{d+1}(k)$ as follows. $\mathcal
H^{d+1}(0)$ is the set of stacked $d$-spheres. A triangulated $d$-manifold is in $\mathcal H^{d+1}(k +1)$ if it
is obtained from a member of $\mathcal H^{d+1}(k)$ by a combinatorial handle addition. The \textit{Walkup's
class} {$\mathcal H^{d+1}$} is the union $\mathcal H^{d+1}=\bigcup_{k \geq 0} \mathcal H^{d+1}(k)$. In \cite{Ka},
Kalai proved

\begin{proposition}[Kalai] \label{prop:Kalai}
Let $M$ be a connected, closed, triangulated manifold of dimension $d\geq 4$. Then $M$ is locally stacked if and
only if $M$ is in $\mathcal H^{d+1}$.
\end{proposition}

Example \ref{exam:Lutz} shows that Proposition \ref{prop:Kalai} does not hold in dimension 3. From \cite{DM} we
know the following.

\begin{proposition}[Datta-Murai] \label{prop:Walkup_class}
Let $M$ be a connected, closed, triangulated manifold of dimension $d\geq 2$. Then $M$ is stacked if and only if
$M$ is in $\mathcal H^{d+1}$.
\end{proposition}

\section{Induced surfaces in tight triangulations}

Let $X_x$ and $X^x$ be as in Notation \ref{notation1}. If $x\neq y$ are two vertices of a simplicial complex $X$,
then we shall also use notations such as $X^x_y$ for $(X^x)_y = (X_y)^x$. We say that two vertices in a
simplicial complex $X$ are adjacent (or, that they are neighbours) if they form an edge of $X$. We now
introduce\,:

\begin{notation} \label{notation:3.1}
{\rm If $x\neq y$ are vertices of a simplicial complex $X$, then $c_X(x, y)$ will denote the number of distinct
connected components $K$ of $X^x_y$ such that $x$ is adjacent in $X_y$ with some vertex in $K$.  }
\end{notation}

\begin{lemma} \label{lemma:3.2}
If $X$ is an $\FF$-tight simplicial complex then for all $x\in V(X)$, we have $\beta_1(X; \FF) =
\tilde{\beta_0}(X_x; \FF) + \beta_1(X^x; \FF)$.
\end{lemma}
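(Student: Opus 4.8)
The plan is to exploit the same decomposition used in the proof of Lemma~\ref{lemma:2.5}, namely $X = X^x \cup (x \ast X_x)$ with $X^x \cap (x \ast X_x) = X_x$, and feed it into the reduced Mayer--Vietoris sequence with coefficients in $\FF$. Since $x \ast X_x$ is a cone, it is contractible, so $\tilde{H}_n(x \ast X_x; \FF) = 0$ for all $n$; this annihilates the summand coming from $x \ast X_x$ in each direct sum and leaves the long exact sequence $\cdots \to \tilde{H}_n(X_x) \to \tilde{H}_n(X^x) \to \tilde{H}_n(X) \to \tilde{H}_{n-1}(X_x) \to \cdots$. I would then isolate the tail running through degrees $1$ and $0$, namely $\tilde{H}_1(X^x) \xrightarrow{g} \tilde{H}_1(X) \xrightarrow{\partial} \tilde{H}_0(X_x) \xrightarrow{h} \tilde{H}_0(X^x)$.

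The two inputs from tightness are the following. First, $X$ is neighbourly by Lemma~\ref{lemma:2.2}, hence connected, so $\tilde{H}_0(X; \FF) = 0$. Second, $X^x = X[V(X)\setminus\{x\}]$ is an induced subcomplex, so $H_\ast(X^x; \FF) \to H_\ast(X; \FF)$ is injective. Injectivity in degree $0$ together with connectedness of $X$ forces $\tilde{H}_0(X^x; \FF) = 0$, i.e. $X^x$ is connected, while injectivity in degree $1$ says precisely that $g$ is a monomorphism. Substituting $\tilde{H}_0(X^x; \FF) = 0$ into the displayed tail makes $h$ the zero map, so by exactness at $\tilde{H}_0(X_x)$ the connecting map $\partial$ is surjective; exactness at $\tilde{H}_1(X)$ gives $\operatorname{im}(g) = \ker(\partial)$. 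Hence the tail reduces to the short exact sequence $0 \to \tilde{H}_1(X^x) \xrightarrow{g} \tilde{H}_1(X) \xrightarrow{\partial} \tilde{H}_0(X_x) \to 0$.

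Finally I would pass to $\FF$-dimensions. Since reduced and ordinary homology agree in positive degrees, $\dim_\FF \tilde{H}_1 = \beta_1$, and additivity of dimension over the short exact sequence yields $\beta_1(X; \FF) = \beta_1(X^x; \FF) + \tilde{\beta}_0(X_x; \FF)$, as required. I do not expect a serious obstacle here; the only points demanding care are the bookkeeping between reduced and unreduced homology (so that the term $\tilde{\beta}_0(X_x)$, rather than $\beta_0(X_x)$, is the one that appears) and the verification that the reduced homology of the cone genuinely vanishes, so that the Mayer--Vietoris summands attached to $x \ast X_x$ drop out. A minor degenerate case---when $X$ is a single vertex, so that $X^x$ or $X_x$ is empty---can be dismissed at the outset.
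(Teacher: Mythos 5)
Your proof is correct and follows essentially the same route as the paper: the decomposition $X = X^x \cup (x \ast X_x)$ with intersection $X_x$, the Mayer--Vietoris sequence with the cone's homology vanishing, injectivity of $H_1(X^x;\FF) \to H_1(X;\FF)$ from tightness, and connectedness of $X^x$ (which the paper deduces from neighbourliness via Lemma~\ref{lemma:2.2}, while you deduce it from degree-zero injectivity --- an immaterial difference) to obtain the same short exact sequence and conclude by counting dimensions.
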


\begin{proof}
Clearly, $X = X^x \cup (x \ast X_x)$ and $X^x \cap (x \ast X_x) = X_x$. Therefore, the Mayer-Vietoris theorem
yields the exact sequence (noting that the cone $x\ast X_x$ is homologically trivial)
\begin{align}
H_1(X^x; \FF) & \to H_1(X; \FF) \to \widetilde{H}_{0}(X_x; \FF) \to \widetilde{H}_{0}(X^x; \FF). \nonumber
\end{align}
Since $X^x$ is an induced subcomplex of the $\FF$-tight complex $X$, the map $H_1(X^x; \FF) \to H_1(X; \FF)$ is
injective. Lemma \ref{lemma:2.2} implies that $X^x$ is connected and hence $\widetilde{H}_{0}(X^x; \FF) = 0$. So
we have the short exact sequence
\begin{align}
0 \to & H_1(X^x; \FF) \to H_1(X; \FF) \to \widetilde{H}_{0}(X_x; \FF) \to 0. \nonumber
\end{align}
Hence we get the result.
\end{proof}

Lemmas \ref{lemma:2.2} and \ref{lemma:2.3} show that tightness is a severe structural constraint on a simplicial
complex. So it is surprising that, beyond these two lemmas, no further structural (combinatorial) consequence of
tightness seems to have been known. The following lemma establishes a strong structural restriction on the
2-skeleton of an $\FF$-tight simplicial complex.

\begin{lemma} \label{lemma:3.3}
Let $X$ be an $\FF$-tight simplicial complex for some field $\FF$. Then, for any two distinct vertices $x, y$ of
$X$, we have $c_X(x, y) = c_X(y, x)$.
\end{lemma}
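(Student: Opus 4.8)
The plan is to reduce the purely combinatorial statement to an identity among reduced $0$-th Betti numbers and then exploit Lemma \ref{lemma:3.2} repeatedly.

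First I would reinterpret $c_X(x,y)$ homologically. Since $X$ is $\FF$-tight it is neighbourly by Lemma \ref{lemma:2.2}, so $xy$ is an edge and $x$ is a vertex of the link $X_y$. Inside $X_y$, the closed star of $x$ is the cone $x \ast (X_y)_x$ over the edge link $(X_y)_x$, its complementary antistar is $X^x_y = (X_y)^x$, and these two subcomplexes meet exactly in $(X_y)_x$. By definition, $c_X(x,y)$ counts those connected components of $X^x_y$ that contain a neighbour of $x$ in $X_y$, i.e. a vertex of $(X_y)_x$. Gluing the connected cone $x \ast (X_y)_x$ onto $X^x_y$ along $(X_y)_x$ fuses precisely these $c_X(x,y)$ components into a single component of $X_y$ and leaves the remaining components untouched; counting components therefore gives
\begin{align}
c_X(x,y) = \tilde{\beta_0}(X^x_y; \FF) - \tilde{\beta_0}(X_y; \FF) + 1. \nonumber
\end{align}
(A clean way to record this bookkeeping is the reduced Mayer--Vietoris sequence for $X_y = (x \ast (X_y)_x) \cup X^x_y$.) By the same reasoning, $c_X(y,x) = \tilde{\beta_0}(X^y_x; \FF) - \tilde{\beta_0}(X_x; \FF) + 1$.

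It now suffices to prove $\tilde{\beta_0}(X^x_y) - \tilde{\beta_0}(X_y) = \tilde{\beta_0}(X^y_x) - \tilde{\beta_0}(X_x)$. Here I would invoke Lemma \ref{lemma:3.2}, applied not only to $X$ but also to the induced subcomplexes $X^x$ and $X^y$, which are $\FF$-tight by Lemma \ref{lemma:2.3}. Applying Lemma \ref{lemma:3.2} to $X$ at the vertices $x$ and $y$, to $X^x$ at $y$, and to $X^y$ at $x$ (and using $(X^x)_y = X^x_y$, $(X^y)_x = X^y_x$ and $(X^x)^y = (X^y)^x =: X^{xy}$), I obtain
\begin{align}
\beta_1(X) &= \tilde{\beta_0}(X_x) + \beta_1(X^x), \qquad \beta_1(X) = \tilde{\beta_0}(X_y) + \beta_1(X^y), \nonumber \\
\beta_1(X^x) &= \tilde{\beta_0}(X^x_y) + \beta_1(X^{xy}), \qquad \beta_1(X^y) = \tilde{\beta_0}(X^y_x) + \beta_1(X^{xy}). \nonumber
\end{align}
Eliminating $\beta_1(X^x)$ and $\beta_1(X^y)$ yields $\beta_1(X) - \beta_1(X^{xy}) = \tilde{\beta_0}(X_x) + \tilde{\beta_0}(X^x_y) = \tilde{\beta_0}(X_y) + \tilde{\beta_0}(X^y_x)$, which is exactly the desired identity. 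Adding $1$ to the two equal sides and substituting the formulas of the previous paragraph gives $c_X(x,y) = c_X(y,x)$.

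The main obstacle is the first step: translating the combinatorial count $c_X(x,y)$ into the homological quantity $\tilde{\beta_0}(X^x_y) - \tilde{\beta_0}(X_y)$ in a manifestly correct way, including the degenerate cases (for instance when $x$ has no neighbour in $X_y$, so $c_X(x,y) = 0$ and $x$ is an isolated vertex of $X_y$; then the ``cone'' is the single vertex $x$ and the formula still reads $\tilde{\beta_0}(X_y) = \tilde{\beta_0}(X^x_y) + 1 - 1$). Once this reformulation is secured, everything else is a formal manipulation of the four instances of Lemma \ref{lemma:3.2}, and it is the symmetry of the right-hand side of the combined identity that forces $c_X(x,y) = c_X(y,x)$; tightness enters only through Lemmas \ref{lemma:2.2}, \ref{lemma:2.3} and \ref{lemma:3.2}.
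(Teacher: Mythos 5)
Your proposal is correct and follows essentially the same route as the paper's own proof: both derive, via Lemma \ref{lemma:3.2} applied to $X$ and to the tight subcomplexes $X^x$, $X^y$ (Lemma \ref{lemma:2.3}), the identity $\tilde{\beta_0}(X_x) + \tilde{\beta_0}(X^x_y) = \tilde{\beta_0}(X_y) + \tilde{\beta_0}(X^y_x)$, and then read off each side as $c_X(x,y)-1$ and $c_X(y,x)-1$. The only difference is expository: the paper asserts the combinatorial interpretation of these differences in one sentence, whereas you justify it explicitly by gluing the connected star $x \ast (X_y)_x$ onto $X^x_y$ along $(X_y)_x$, which is a sound (and welcome) elaboration of the same step.
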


\begin{proof}
By Lemma \ref{lemma:3.2}, $\beta_1(X) = \tilde{\beta_0}(X_x) + \beta_1(X^x)$. Since $X^x$ is also tight by Lemma
\ref{lemma:2.3}, applying Lemma \ref{lemma:3.2} to the vertex $y$ of $X^x$, we get $\beta_1(X^x) =
\tilde{\beta_0}(X_y^x) + \beta_1(X^{xy})$. Therefore, $\beta_1(X) = \tilde{\beta_0}(X_x) + \tilde{\beta_0}(X_y^x)
+ \beta_1(X^{xy})$. Interchanging the vertices $x$ and $y$ in this argument yields $\beta_1(X) =
\tilde{\beta_0}(X_y) + \tilde{\beta_0}(X_x^y) + \beta_1(X^{yx})$. Since $X^{xy} = X^{yx}$, we get
\begin{align}
\tilde{\beta_0}(X^x_y) & - \tilde{\beta_0}(X_y) = \tilde{\beta_0}(X_x^y) - \tilde{\beta_0}(X_x). \nonumber
\end{align}
But the two sides of this equation are just one less than $c_X(x, y)$ and $c_X(y, x)$. Hence the result.
\end{proof}

In the following, we will make use of some basic concepts from graph theory, where in our context a {\em graph}
can be seen as a simplicial complex of dimension $\leq 1$. In this paper, we do not use any non-trivial results
from graph theory, but the language and the geometric intuition of graph theory will be useful. Recall that the
{\em degree} of a vertex $v$ (denoted by $\deg(v)$) in a simplicial complex is the number of edges (1-faces)
through $v$. A graph is said to be {\em regular}, if all its vertices have the same degree. For $n\geq 3$, the
{\em cycle of length $n$} (in short, {\em $n$-cycle}) is the unique connected regular graph of degree two on $n$
vertices. It is the unique $n$-vertex triangulation of the circle $S^1$. An $n$-cycle with edges $a_1a_2, \dots,
a_{n-1}a_n, a_na_1$ will be denoted by $C_n(a_1, \dots, a_n)$. For $n\geq 1$, the {\em path of length $n$} (the
$n$-path) is the antistar of a vertex in the $(n+2)$-cycle. By an {\em induced cycle} (resp., path) in a
simplicial complex $X$, we mean an induced subcomplex of $X$ which is a cycle (resp., path). Notice that, in
particular, a 3-cycle is induced in $X$ if and only if it does not bound a triangle (2-face) in $X$. When $n \geq
4$, an $n$-cycle is induced in X if and only if it is induced in the graph ${\rm skel}_1(X)$. A connected acyclic
graph is called a {\em tree}.

\begin{lemma} \label{lemma:3.4}
Let the link of some vertex $x$ in a $2$-dimensional $\FF$-tight simplicial complex $X$ be a cycle. Then $X$ is a
triangulation of a closed $2$-manifold.
\end{lemma}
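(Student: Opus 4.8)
Throughout write $N=f_0(X)$. Since $X$ is $\FF$-tight it is neighbourly by Lemma \ref{lemma:2.2}, so every vertex of $X$ other than $x$ is joined to $x$ by an edge; hence $V(X)=\{x\}\cup V(X_x)$ and the cycle $X_x$ has exactly $n:=N-1$ vertices, say $v_1\text{-}v_2\text{-}\cdots\text{-}v_n\text{-}v_1$ in cyclic order. A $2$-dimensional complex triangulates a closed surface if and only if the link of each of its vertices is a cycle, so the whole content of the lemma is to upgrade the single hypothesis ``$X_x$ is a cycle'' to ``$X_{v}$ is a cycle for every $v$''. The first observation is that the only triangles through $x$ are the $n$ triangles $xv_iv_{i+1}$ (indices mod $n$), because these correspond exactly to the edges of the cycle $X_x$; consequently each edge $xv_i$ lies in precisely the two triangles $xv_{i-1}v_i$ and $xv_iv_{i+1}$, and therefore $x$ is a vertex of degree exactly $2$ in every link $X_{v_i}$, with neighbours $v_{i-1}$ and $v_{i+1}$.

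The plan is to feed this into Lemma \ref{lemma:3.3}. For the pair $\{x,v_i\}$, the complex $X^{v_i}_x = X_x - v_i$ is obtained from the cycle $X_x$ by deleting one vertex, hence is a single path containing both neighbours $v_{i-1},v_{i+1}$ of $v_i$; thus $c_X(v_i,x)=1$. Lemma \ref{lemma:3.3} then forces $c_X(x,v_i)=1$, which says that the two neighbours $v_{i-1},v_{i+1}$ of $x$ in $X_{v_i}$ lie in one and the same connected component of $X^x_{v_i}=X_{v_i}-x$. Combined with $\deg_{X_{v_i}}(x)=2$, this shows that $x$ is not a cut-vertex of $X_{v_i}$ and that $x$ lies on a genuine cycle of $X_{v_i}$ passing through $v_{i-1}$ and $v_{i+1}$. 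So every link already contains a distinguished cycle through $x$, and $x$ sits on it with the correct local (degree-two) behaviour.

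The remaining and crux step is to promote ``$X_{v_i}$ contains a cycle through $x$'' to ``$X_{v_i}$ is a single cycle'', for every $i$. Since a graph is a cycle exactly when it is connected and $2$-regular, I would first prove that $X$ is a $2$-pseudomanifold, i.e. that every edge lies in exactly two triangles: this is already known for the $n$ edges incident to $x$, and it must be propagated to the diagonals $v_iv_j$. Here I would run Lemma \ref{lemma:3.3} on general pairs $\{v_i,v_j\}$ (exploiting that the portion of each link near $x$ is already pinned down) together with the global double count $\sum_v (\text{number of edges of }X_v)=3f_2(X)$ to rule out edges meeting three or more triangles and isolated or branch vertices in the links. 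To control the total homological budget I expect to use the Mayer--Vietoris decomposition $X=X^x\cup(x\ast X_x)$ with intersection $X_x$: since $x\ast X_x$ is contractible, $X_x$ is null-homologous in $X$, and because $X^x$ is $\FF$-tight (Lemma \ref{lemma:2.3}) and $\beta_1(X)=\beta_1(X^x)$ (Lemma \ref{lemma:3.2}, $\tilde\beta_0(X_x)=0$), the injectivity of $H_\ast(X^x;\FF)\to H_\ast(X;\FF)$ yields the clean relation $\beta_2(X)=\beta_2(X^x)+1$; this is the constraint I would use to force the complementary piece $X^x$ to close the disk $x\ast X_x$ up into a surface rather than something with extra triangles. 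Once every edge lies in exactly two triangles and each link is connected (the connectivity coming from the $c_X$-equalities applied to all pairs), each $X_{v_i}$ is a connected $2$-regular graph, hence a cycle, and $X$ is a closed $2$-manifold. The main obstacle is precisely this last synthesis: Lemma \ref{lemma:3.3} only directly governs pairs involving the good vertex $x$, so extending the ``exactly two triangles per edge'' conclusion to arbitrary edges --- equivalently, eliminating all branch vertices from the links --- is where the real work lies.
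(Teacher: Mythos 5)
Your first two steps are sound and coincide with the paper's opening moves: by neighbourliness every vertex $y \neq x$ lies on the cycle $C = X_x$, the only neighbours of $x$ in $X_y$ are the two $C$-neighbours $z, w$ of $y$, and Lemma \ref{lemma:3.3} applied to the pair $(y, x)$ (using that $X^y_x = C - y$ is connected) gives $c_X(x,y) = 1$, i.e.\ $z$ and $w$ lie in a common component of $X^x_y$. But from that point on your text stops being a proof: upgrading ``$X_y$ contains a cycle through $x$'' to ``$X_y$ is a cycle'' is exactly what the lemma demands, and you offer only a plan (the double count $\sum_v f_1(X_v) = 3f_2(X)$, Lemma \ref{lemma:3.3} on pairs $\{v_i,v_j\}$, a Mayer--Vietoris budget $\beta_2(X)=\beta_2(X^x)+1$) without carrying any of it out; you yourself flag that eliminating branch vertices and extra triangles is ``where the real work lies.'' So there is a genuine gap at the decisive point, and it is not clear your proposed routes can close it: Lemma \ref{lemma:3.3} applied inside $X$ to pairs $\{v_i,v_j\}$ gives little control, because away from $x$ you know almost nothing about the links entering the definition of $c_X(v_i,v_j)$.

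The missing idea is not to run Lemma \ref{lemma:3.3} on new vertex pairs, but to run it on the \emph{same} pair $(x,y)$ inside new ambient complexes: the antistars $Y = X^v$, which are again $\FF$-tight by Lemma \ref{lemma:2.3}. Fix $y$, and let $P$ be a shortest (hence induced) path from $z$ to $w$ in the graph $X^x_y$; it exists since $c_X(x,y)=1$. Now take any vertex $v \neq x,y,z,w$ and set $Y = X^v$. Then $Y_x = C - v$ is a path in which $y$ is an interior vertex, so $Y^y_x = C - v - y$ is a disjoint union of two paths, one containing $z$ and the other containing $w$; hence $c_Y(y,x)=2$, and Lemma \ref{lemma:3.3} forces $c_Y(x,y)=2$, i.e.\ $z$ and $w$ lie in \emph{different} components of $Y^x_y$. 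If $v \notin P$, the path $P$ would survive inside $Y^x_y$ and join $z$ to $w$ --- a contradiction. Hence every such $v$ lies on $P$, so $P$ is a spanning induced subgraph of the graph $X^x_y$, giving $X^x_y = P$; since the only neighbours of $x$ in $X_y$ are $z$ and $w$, the link $X_y$ is the cycle obtained by closing up $P$ through $x$. This single trick settles simultaneously the two-triangles-per-edge condition and the connectivity of links that your sketch left open; in particular your stated obstacle (``Lemma \ref{lemma:3.3} only directly governs pairs involving the good vertex $x$'') dissolves once you let the ambient tight complex vary instead of the pair.
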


\begin{proof}
Let $C = X_x$ be a cycle. Fix a vertex $y\neq x$ of $X$. It suffices to show that the link $X_y$ is also a cycle.
Note that, since $X$ is neighbourly (Lemma \ref{lemma:2.2}), $y$ is a vertex of $C$. Let $z$ and $w$ be the two
neighbours of $y$ in $C$. It follows that $z$ and $w$ are the only two neighbours of the vertex $x$ in the graph
$X_y$. Therefore, it suffices to show that $X^x_y$ is a path joining $z$ and $w$.

Since $X_x = C$ is a cycle and $X^y_x = C^y$ is a path, they are both connected. So $c_X(y, x) = 1$. Therefore,
by Lemma \ref{lemma:3.3}, $c_X(x, y) = 1$. That is, the vertices $z$ and $w$ (being the two neighbours of $x$ in
$X_y$) belong to the same component of $X^x_y$. Thus, there is a path in $X^x_y$ joining $z$ to $w$. Let $P$ be a
shortest path in the graph $X^x_y$ joining $z$ to $w$. Then, $P$ is an induced path in $X^x_y$. Take any vertex
$v \neq x, y, z, w$ in $X$. (If there is no such vertex then $X^x_y$ is the edge $zw$, and we are done.) We will
show that $v\in P$.
Look at the induced subcomplex $Y = X^v$ of $X$. Then $Y_x = X^v_x = C^v$ is a path in which $y$ is an interior
vertex. So $Y^y_x = C^{vy}$ is the disjoint union of two paths. The vertex $z$ belongs to one of these two paths
and $w$ belongs to the other. Therefore, $c_Y(y, x) = 2$. By Lemma \ref{lemma:2.3}, $Y$ is also $\FF$-tight. So,
by Lemma \ref{lemma:3.3}, $c_Y(x, y) =2$. That is, the neighbours $z$ and $w$ of $x$ in $Y_y$ belong to different
components of $Y^x_y$. Therefore, $v$ belongs to the path $P$ (or else $P$ would be a path in $Y^x_y$ joining $z$
and $w$). Since $v \neq z, w$ was an arbitrary vertex of $X^x_y$, this shows that the path $P$ is a spanning path
in $X^x_y$ (i.e., it passes through all the vertices). Since $P$ is also an induced path in $X^x_y$, it follows
that $X^x_y = P$ is a path joining $z$ and $w$.
\end{proof}

The following result is a straightforward consequence of Lemma \ref{lemma:3.4}. Notice that there is no
restriction on the dimension of $M$ in this result, and $M$ need not be a triangulated manifold.

\begin{theorem} \label{theorem:3.5}
Let the simplicial complex $M$ be tight with respect to some field. Let $C$ be an induced cycle in the link of a
vertex $x$ in $M$. Then the induced subcomplex of $M$ on the vertex set of the cone $x\ast C$ is a closed,
triangulated $2$-manifold.
\end{theorem}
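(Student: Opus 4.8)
The plan is to reduce Theorem~\ref{theorem:3.5} to the two-dimensional situation already handled in Lemma~\ref{lemma:3.4}. Let $V$ denote the vertex set of the cone $x \ast C$, that is, $V = \{x\} \cup V(C)$, and let $N := M[V]$ be the induced subcomplex of $M$ on $V$. The goal is to show that $N$ is a triangulated closed $2$-manifold. First I would observe that, since $M$ is $\FF$-tight, Lemma~\ref{lemma:2.3} tells us that the induced subcomplex $N$ is itself $\FF$-tight. So if I can show that $N$ is a $2$-dimensional complex in which the link $N_x$ of the cone apex $x$ is precisely the cycle $C$, then Lemma~\ref{lemma:3.4} applies directly to $N$ and finishes the proof.

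\medskip
\noindent\textbf{Key steps.}
The first substantive step is to identify the link $N_x$ of $x$ in $N$. By hypothesis $C$ is an induced cycle in the link $M_x$, so $C = M_x[V(C)] = (M[V])_x = N_x$; here I would spell out that the faces of $N_x$ are exactly those faces $\alpha$ of $M_x$ with $\alpha \subseteq V(C)$, which is the induced subcomplex $M_x[V(C)]$, and this equals $C$ because $C$ is \emph{induced} in $M_x$. Thus $N_x = C$ is a cycle, as required by Lemma~\ref{lemma:3.4}. The second step is to verify that $N$ is a $2$-dimensional simplicial complex: since $N_x = C$ is $1$-dimensional, every face of $N$ containing $x$ has dimension at most $2$, and I would argue that no face avoiding $x$ can have dimension exceeding $2$ either, using $2$-neighbourliness (Lemma~\ref{lemma:2.2}) together with the fact that in the induced complex $N$ any vertex $v \neq x$ of $V(C)$ is joined to $x$, so its behaviour is controlled by the cone structure over $C$. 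Once these two points are in place, Lemma~\ref{lemma:3.4} applied to the $\FF$-tight $2$-complex $N$ yields that $N$ triangulates a closed $2$-manifold.

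\medskip
\noindent\textbf{The main obstacle.}
The delicate point is the dimension bookkeeping in the second step: a priori, the induced subcomplex $N = M[V]$ could contain a high-dimensional face spanned entirely by vertices of $V(C)$ (not involving $x$), which would make $N$ fail to be $2$-dimensional and obstruct the application of Lemma~\ref{lemma:3.4}. I expect this to be the crux of the argument, and I would resolve it by exploiting that $C$ is an induced cycle in $M_x$: any face $\beta$ of $N$ with $\beta \subseteq V(C)$ satisfies $\beta \cup \{x\} \in M$ exactly when $\beta \in M_x$, and since $M_x[V(C)] = C$ is a cycle (hence at most $1$-dimensional), the faces of $M$ spanned by $V(C)$ together with $x$ are confined to triangles of the cone $x \ast C$. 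The remaining possibility—a face of $M$ lying in $V(C)$ but not extendable by $x$—would have to be handled by a local argument showing that such a face cannot arise without contradicting that $C$ is induced in the link; making this last exclusion fully rigorous is where the real work lies, and it is the step I would scrutinize most carefully before declaring the reduction to Lemma~\ref{lemma:3.4} complete.
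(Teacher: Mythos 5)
Your reduction to Lemma~\ref{lemma:3.4} is the right idea, and your first step is sound: since $C$ is induced in $M_x$, the link of $x$ in $N := M[V(C)\cup\{x\}]$ is indeed $N_x = M_x[V(C)] = C$, and consequently every face of $N$ containing $x$ has dimension at most $2$. But your proof stops exactly where the real difficulty begins, and you say so yourself: you never exclude faces of $M$ of dimension $\geq 3$ spanned by vertices of $V(C)$ alone, so you never establish that $N$ is $2$-dimensional, which is a hypothesis of Lemma~\ref{lemma:3.4}. This is a genuine gap, and the ``local argument'' you gesture at cannot work as described, because inducedness of $C$ in $M_x$ by itself does not forbid such faces. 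For instance, let $C$ be the $4$-cycle $a\mbox{-}b\mbox{-}c\mbox{-}d\mbox{-}a$ and let $M = (x\ast C)\cup\overline{abcd}$: here $C$ is induced in $M_x = C$, yet $abcd$ is a $3$-face on $V(C)$. (This $M$ is of course not tight.) So any correct exclusion must use tightness in an essential, global way, and your sketch does not indicate how.

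The paper resolves precisely this chicken-and-egg problem by reversing the order of the two steps. Instead of proving $2$-dimensionality first, it applies Lemma~\ref{lemma:3.4} to $X := {\rm skel}_2(Y)$, where $Y = M[V(C)\cup\{x\}]$: this complex is $2$-dimensional by fiat, it is $\FF$-tight by Lemmas~\ref{lemma:2.3} \emph{and}~\ref{lemma:2.4} (tightness of skeleta --- the lemma your proposal never uses, and the reason it appears in the paper), and it satisfies $X_x = C$. Hence $X$ is a connected triangulated closed $2$-manifold. Only then are higher faces excluded, \emph{a posteriori}, using the manifold structure just obtained: if $Y$ had a $3$-face $\alpha$, then $X[\alpha] \cong S^{\hspace{.15mm}2}_4$ would be a closed $2$-manifold sitting inside the connected closed $2$-manifold $X$, forcing $X = S^{\hspace{.15mm}2}_4$ and $V(C)\cup\{x\} = \alpha$; but then $C$ is the $3$-cycle on $\alpha\setminus\{x\}$, and $\alpha\in M$ shows the triangle $\alpha\setminus\{x\}$ lies in $M_x$, contradicting that $C$ is induced. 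This bootstrap --- manifold structure of the $2$-skeleton first, dimension bound for $Y$ second --- is the idea missing from your proposal; without it (or some substitute use of tightness), the reduction to Lemma~\ref{lemma:3.4} remains incomplete.
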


\begin{proof}
Let $Y = M[V(C) \cup\{x\}]$ and let $X = {\rm skel}_2(Y)$. Then, by Lemmas \ref{lemma:2.3} and \ref{lemma:2.4},
$X$ is $\FF$-tight. Clearly, $X$ is two dimensional and $X_x = C$ is a cycle. Thus, by Lemma \ref{lemma:3.4}, $X$
is a closed, triangulated 2-manifold. To complete the proof, it suffices to show that $Y = X$, i.e., that
$\dim(Y) =2$. Suppose, if possible, $\dim(Y) > 2$. Take a 3-face $\alpha\in Y$. Then the induced subcomplex of
$X= {\rm skel}_2(Y)$ on the vertex set $\alpha$ is a 4-vertex triangulated 2-sphere. Since $X$ is a connected,
closed, triangulated 2-manifold, it follows that $X = S^2_4$, and $V(X) = \alpha$. Thus $C$ is the 3-cycle on the
vertex set $\alpha\setminus\{x\}$. But, the 2-face $\alpha\setminus\{x\}$ is in $Y_x\subseteq M_x$. This
contradicts the assumption that $C$ is an induced cycle in the link $M_x$.
\end{proof}

\begin{corollary} \label{coro:3.6}
Let $S$ be the link of some vertex in an $\FF$-tight simplicial complex $M$.
\begin{enumerate}[{\rm (a)}]
\item If ${\rm char}(\FF) = 2$, then $S$ has no induced cycle of length $\equiv 1$ $($mod $3)$.

\item If ${\rm char}(\FF) \neq 2$, then $S$ has no induced cycle of length $\equiv 0, 1, 4, 5, 7, 8, 9$ or $10$
$($mod $12)$.
\end{enumerate}
\end{corollary}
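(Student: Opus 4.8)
The plan is to reduce both parts to a divisibility/parity constraint on the Euler characteristic of a neighbourly triangulated closed surface manufactured from the induced cycle via Theorem \ref{theorem:3.5}.

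First I would suppose, towards a contradiction, that $C$ is an induced cycle of length $n$ in $S = M_x$, where $x$ is the vertex whose link is $S$. By Theorem \ref{theorem:3.5} the induced subcomplex $N := M[V(C)\cup\{x\}]$ is a triangulated closed $2$-manifold, and it has $v := n+1$ vertices. Being an induced subcomplex of the $\FF$-tight complex $M$, $N$ is itself $\FF$-tight by Lemma \ref{lemma:2.3}, hence neighbourly by Lemma \ref{lemma:2.2}, so $f_1(N) = \binom{v}{2}$. Since $N$ is a closed surface, $3f_2(N) = 2f_1(N)$, giving $f_2(N) = \tfrac{2}{3}\binom{v}{2} = \tfrac{v(v-1)}{3}$ and therefore $\chi(N) = v - \binom{v}{2} + \tfrac{v(v-1)}{3} = \tfrac{v(7-v)}{6}$. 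The entire argument now rests on the arithmetic of this single expression.

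For part (a) the only constraint available is that $f_2(N)$ (equivalently $\chi(N)$) is an integer, i.e.\ $3 \mid v(v-1)$, which fails exactly when $v \equiv 2 \pmod 3$. If $n \equiv 1 \pmod 3$ then $v = n+1 \equiv 2 \pmod 3$, so no such $N$ can exist, contradicting the hypothesis; this rules out induced cycles of length $\equiv 1 \pmod 3$.

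For part (b) I gain one extra constraint: since $\mathrm{char}(\FF) \neq 2$, the $\FF$-tight closed surface $N$ is $\FF$-orientable by Lemma \ref{lemma:2.5}, and in odd characteristic $\FF$-orientability coincides with orientability, so $N$ is an orientable closed surface and $\chi(N)$ is even. Thus I need $\tfrac{v(7-v)}{6}$ to be an even integer, equivalently $v(7-v) \equiv 0 \pmod{12}$. A direct check of the twelve residue classes of $v$ shows this holds precisely for $v \equiv 0,3,4,7 \pmod{12}$, i.e.\ $n \equiv 11,2,3,6 \pmod{12}$; for the complementary residues $n \equiv 0,1,4,5,7,8,9,10 \pmod{12}$ one finds $v(7-v) \not\equiv 0 \pmod{12}$, a contradiction. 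The main (and essentially only) obstacle is the bookkeeping step of correctly translating the ``integral and even'' condition on $\tfrac{v(7-v)}{6}$ into the stated list of forbidden residues of $n$ modulo $12$; every other ingredient is an immediate application of Theorem \ref{theorem:3.5} together with Lemmas \ref{lemma:2.2}, \ref{lemma:2.3} and \ref{lemma:2.5}.
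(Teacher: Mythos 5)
Your proposal is correct and follows essentially the same route as the paper's own proof: apply Theorem \ref{theorem:3.5} to get a neighbourly triangulated closed surface $N$ on $n+1$ vertices, deduce part (a) from integrality of $f_2(N)=\tfrac{1}{3}\binom{n+1}{2}\cdot 2$, and part (b) from evenness of $\chi(N)$ via orientability (Lemmas \ref{lemma:2.3} and \ref{lemma:2.5}). The only cosmetic difference is your substitution $v=n+1$, and your residue computation modulo $12$ matches the paper's.
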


\begin{proof}
Let $x\in V(M)$ and let $C$ be an induced $n$-cycle in $S = M_x$. By Theorem \ref{theorem:3.5}, the induced
subcomplex $X = M[V(C)\cup\{x\}]$ is an $(n+1)$-vertex, closed, triangulated 2-manifold. By Lemma
\ref{lemma:2.2}, $X$ is neighbourly. So it has $n+1$ vertices, $n(n+1)/2$ edges and hence $n(n+1)/3$ triangles.
So 3 divides $n(n+1)$, i.e., $n \not \equiv 1$ (mod 3). This proves part (a).

If ${\rm char}(\FF) \neq 2$ then, by Lemmas \ref{lemma:2.3} and \ref{lemma:2.5}, $X$ is an orientable,
triangulated 2-manifold. So its Euler characteristic $\chi(X) = (n+1)(6-n)/6$ is an even number. Thus, $n
\not\equiv 0, 1, 4, 5, 7, 8, 9$ or 10 (mod 12). This proves part (b).
\end{proof}

\section{Odd characteristic}

In this section, we prove that any triangulated $3$-manifold is tight with respect to a field of odd
characteristic if and only if it is neighbourly, orientable and stacked. For this, we first need some additional
preliminary results on triangulations of the 2-sphere.

\begin{lemma} \label{lemma:4.1}
Let $S$ be a triangulation of $S^{\hspace{.15mm}2}$. If $S$ has no induced cycle of length $\leq 5$ then $S =
S^{\hspace{.15mm}2}_4$.
\end{lemma}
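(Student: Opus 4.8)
The plan is to assume that $S$ has no induced cycle of length $\le 5$ and to force $S = S^2_4$ by inspecting a vertex of small degree. Since $S$ triangulates $S^2$, Euler's formula gives $f_1(S) = 3f_0(S) - 6$, so the sum of the vertex degrees equals $2f_1(S) = 6f_0(S) - 12 < 6f_0(S)$; hence the average degree is strictly less than $6$ and there is a vertex $v$ with $\deg(v) \le 5$. Because $S$ is a triangulated closed surface, the link $S_v$ is a cycle whose length equals $\deg(v)$, and $\deg(v) \ge 3$ since every vertex link is a cycle. I would then split into cases according to $\deg(v) \in \{3,4,5\}$.

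The cases $\deg(v) \in \{4,5\}$ are handled uniformly. Here $S_v$ is a cycle $C$ of length $4$ or $5$. If $C$ has no chord, then $C$ is induced in $\skel{1}{S}$, hence an induced cycle of $S$ of length $\le 5$, contradicting the hypothesis. So $C$ has a chord $ac$; that is, $a$ and $c$ are vertices of $C$ that are non-adjacent in $C$ but satisfy $ac \in S$. Then $v$, $a$, $c$ span a $3$-cycle, since $va, vc \in S$ (as $a, c \in S_v$) and $ac \in S$. The key observation is that $vac$ is \emph{not} a $2$-face: the only triangles through $v$ are the cones over the edges of the cycle $S_v$, and $ac$ is not an edge of that cycle. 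Hence $vac$ is an induced $3$-cycle, again contradicting the hypothesis. This rules out $\deg(v) \in \{4,5\}$.

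Consequently $\deg(v) = 3$, so $S_v$ is a $3$-cycle on some vertices $a, b, c$. If $abc$ were not a $2$-face of $S$, this $3$-cycle would be induced, contradicting the hypothesis; therefore $abc \in S$. Now the four triangles $vab$, $vbc$, $vca$, $abc$ together with their subfaces form a subcomplex of $S$ equal to the boundary of the tetrahedron on $\{v,a,b,c\}$, i.e., a copy of $S^2_4$. Since $S$ is a connected triangulation of the closed surface $S^2$ and this subcomplex is itself a triangulated closed $2$-manifold of the same dimension, it cannot be a proper subcomplex; thus $S = S^2_4$, as required.

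The argument is short, and the step I would flag as the crux is the verification that the chord $ac$ yields an \emph{induced} $3$-cycle $vac$ rather than one bounding a triangle; this rests on the precise description of the triangles incident to $v$ as exactly the cones $v \ast e$ over the edges $e$ of the link cycle $S_v$. The remaining delicacy is the final "no proper closed submanifold" step, which follows from connectedness of $|S|$ together with the fact that $|S^2_4| \subseteq |S|$ is both open and closed in $|S|$.
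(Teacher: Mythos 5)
Your proof is correct and follows essentially the same route as the paper's: locate a vertex of degree at most $5$ (the paper cites this as well known, you derive it from Euler's formula), rule out degrees $4$ and $5$ by noting that a chord of the link cycle yields an induced (non-face-bounding) $3$-cycle through the vertex, and in the degree-$3$ case obtain an $S^2_4$ subcomplex which must be all of $S$. The only differences are expository, namely your explicit justification of the two facts the paper treats as standard.
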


\begin{proof}
Let $x$ be a vertex of minimum degree in $S$. It is well known (and easy to prove) that the minimum degree of any
triangulation of $S^{\hspace{.15mm}2}$ is at most five. So $\deg(x) = 3, 4$ or 5.

If $\deg(x) = 4$ or 5, then $S_x$ is a 4-cycle or a 5-cycle in $S$, and hence it is not induced. So there are
vertices $y, z$ in $S_x$ such that $yz$ is an edge in $S$ but not in $S_x$. So $C_3(x, y, z)$ is an induced
3-cycle in $S$, a contradiction. Thus, $\deg(x) = 3$. Say $S_x$ is the 3-cycle $C_3(x_1, x_2, x_3)$. Since this
3-cycle is not an induced cycle in $S$, it follows that $x_1x_2x_3\in S$. Then $(x\ast S_x) \cup
\{x_1x_2x_3\}=S^{\hspace{.15mm}2}_4$. Since a triangulation of $S^{\hspace{.15mm}2}$ can not be a proper
subcomplex of another triangulation of $S^{\hspace{.15mm}2}$, it follows that $S=S^{\hspace{.15mm}2}_4$.
\end{proof}

\begin{definition} \label{def:primitive}
{\rm A triangulated $d$-sphere $S$ is said to be {\em primitive} if it can not be written as a connected sum of
two triangulated $d$-spheres.}
\end{definition}

Clearly, every triangulated $d$-sphere is a connected sum of finitely many primitive $d$-spheres.

\begin{lemma} \label{lemma:4.3}
Let $S$ be a triangulated $d$-sphere. Then $S$ is primitive if and only if $S$ has no induced subcomplex
isomorphic to $S^{\hspace{.15mm}d-1}_{d+1}$.
\end{lemma}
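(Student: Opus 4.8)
The plan is to prove both implications via their contrapositives, using the dictionary that an induced subcomplex of $S$ isomorphic to $S^{d-1}_{d+1}$ is exactly an \emph{empty $d$-simplex}: a $(d+1)$-set $\alpha \subseteq V(S)$ with $\partial\overline{\alpha} \subseteq S$ but $\alpha \notin S$. Indeed, if $\Sigma = S[\alpha] \cong S^{d-1}_{d+1}$, then necessarily $\Sigma = \partial\overline{\alpha}$, so every proper subset of $\alpha$ is a face while $\alpha$ itself is not (otherwise $S[\alpha]$ would equal $\overline{\alpha} \neq \partial\overline{\alpha}$); conversely any such $\alpha$ gives $S[\alpha] = \partial\overline{\alpha} \cong S^{d-1}_{d+1}$. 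Thus it suffices to show: $S$ is non-primitive if and only if $S$ has an empty $d$-simplex.

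For the easy direction, suppose $S$ is not primitive, say $S = S_1 \#_\psi S_2$, obtained from facets $\sigma \in S_1$ and $\tau \in S_2$ by deleting both and identifying $\sigma$ with $\tau$ through $\psi$. Let $\alpha$ be the resulting identified vertex set, so $\#\alpha = d+1$. Every proper subset of $\sigma$ (resp.\ $\tau$) is a face of $S_1$ (resp.\ $S_2$) and survives the deletion of $\sigma$ (resp.\ $\tau$), so after identification every proper subset of $\alpha$ is a face of $S$; hence $\partial\overline{\alpha} \subseteq S$. Conversely, any face of $S$ whose vertices lie in $\alpha$ must be the image of a face of $S_1$ inside $\sigma$ or of a face of $S_2$ inside $\tau$, and both $\sigma,\tau$ were deleted; hence $\alpha \notin S$ and no face other than the proper subsets of $\alpha$ sits on $\alpha$. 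Therefore $\alpha$ is an empty $d$-simplex and $S[\alpha] = \partial\overline{\alpha} \cong S^{d-1}_{d+1}$.

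For the converse, let $\alpha$ be an empty $d$-simplex and set $\Sigma = \partial\overline{\alpha} = S[\alpha]$, a full subcomplex of $S$ with $|\Sigma| \cong S^{d-1}$ sitting inside $|S| \cong S^d$ with codimension one. The geometric heart of the argument is that $\Sigma$ separates $|S|$ into two $d$-balls. Concretely, the relative interior of each facet (top-dimensional face) of $S$ is connected and disjoint from $|\Sigma|$, so it lies in a single component of $|S| \setminus |\Sigma|$; by Alexander duality this complement has exactly two components, yielding a partition of the facets of $S$ into two nonempty classes whose closed unions $B_1, B_2$ are subcomplexes with $B_1 \cup B_2 = S$ and $B_1 \cap B_2 = \Sigma$ (a common $(d-1)$-face of a $B_1$-facet and a $B_2$-facet must lie on the separating sphere, i.e.\ in $\Sigma$). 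Each $B_i$ is then a $d$-ball with $\partial B_i = \Sigma$. Capping off, $S_i := B_i \cup \overline{\alpha}$ is a triangulated $d$-sphere, and removing the facet $\alpha$ from $S_1 \cup S_2$ recovers $B_1 \cup B_2 = S$; that is, $S = S_1 \#_{\mathrm{id}} S_2$, so $S$ is not primitive.

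The main obstacle is the separation claim in the last paragraph, namely that each side $B_i$ is genuinely a $d$-ball rather than merely a homology ball. The count of components (exactly two) is automatic from Alexander duality, but upgrading the two sides to honest balls is a Schoenflies-type statement. This is classical and elementary for $d \le 2$, which is the only case needed for the applications in this paper (the links $S$ arising there are $2$-spheres, so $\Sigma$ is a circle separating a $2$-sphere into two disks); for general $d$ it follows from the generalized topological Schoenflies theorem once one observes that the codimension-one subcomplex $\Sigma$ is bicollared in the manifold $|S|$. Everything else is routine bookkeeping with the connected-sum definition, which is why the lemma can reasonably be called ``more or less obvious.''
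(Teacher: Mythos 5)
Your proof is correct and takes essentially the same route as the paper's: the common facet of a connected sum yields the induced $S^{d-1}_{d+1}$, and conversely an induced $S^{d-1}_{d+1}$ on vertex set $\alpha$ separates $S$ into two balls $B_1, B_2$, which are capped off by $\overline{\alpha}$ to give the two summands. The only difference is one of detail: where the paper simply asserts that the induced $S^{d-1}_{d+1}$ ``divides $S$ into two triangulated $d$-balls,'' you supply the justification (Alexander duality for the two complementary components, plus a Schoenflies/bicollaring argument for the ball structure, elementary in the case $d\le 2$ actually used in the paper), which is a fair accounting of why the paper deems the lemma ``more or less obvious.''
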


\begin{proof}
If $S$ is not primitive, then $S = S_1 \# S_2$, where $S_1$ and $S_2$ are triangulated $d$-spheres. Let $\alpha$
be the unique common facet of $S_1$ and $S_2$. Then the boundary of $\overline{\alpha}$ is an induced
$S^{\hspace{.15mm}d-1}_{d+1}$ in $S$.

Conversely, suppose $S$ has an induced $S^{\hspace{.15mm}d-1}_{d+1}$, say with vertex set $\alpha$. This
$S^{\hspace{.15mm}d-1}_{d+1}$ divides $S$ into two triangulated $d$-balls $B_1$ and $B_2$ such that $S = B_1\cup
B_2$, $\partial B_1 = \partial B_2 = B_1\cap B_2 = S^{\hspace{.15mm}d-1}_{d+1}$. Put $S_i = B_i\cup\{\alpha\}$,
$i=1,2$. Then $S_1$, $S_2$ are triangulated $d$-spheres and $S = S_1 \# S_2$. So $S$ is not primitive.
\end{proof}

The following lemma and definition clarify the meaning of connected sums of several primitive triangulated
spheres.

\begin{notation} \label{notation:c_sum}
{\rm Let $S$ be a triangulation of $S^{\hspace{.15mm}d}$. Let $\mathcal{A}(S) := \{\alpha\subseteq V(S) \, : \,
S[\alpha] \cong S^{\hspace{.15mm}d-1}_{d+1}\}$, $\overline{S} := S \cup \mathcal{A}(S)$, and let $\mathcal{B}(S)$
be the collection of all the induced subcomplexes of the simplicial complex $\overline{S}$ which are primitive
triangulations of $S^{\hspace{.15mm}d}$. Let $\mathcal{T}(S)$ be the graph with vertex set $\mathcal{B}(S)$ such
that $S_1, S_2\in \mathcal{B}(S)$ are adjacent in $\mathcal{T}(S)$ if  $S_1 \cap S_2 = \overline{\alpha}$ for
some $\alpha \in \mathcal{A}(S)$. }
\end{notation}

\begin{lemma} \label{lemma:c_sum}
For any triangulated $d$-sphere $S$ the following hold.
\begin{enumerate}[{\rm (a)}]
\item Any two members of $\mathcal{B}(S)$ have at  most one common facet; if they have a common facet $\alpha$
then $\alpha \in \mathcal{A}(S)$. \item Each member of $\mathcal{A}(S)$ belongs to exactly two members of
$\mathcal{B}(S)$. \item There is a natural bijection from $\mathcal{A}(S)$ onto the set of edges of
$\mathcal{T}(S)$. It is given by $\alpha \mapsto \{S_1, S_2\}$, where, for $\alpha\in \mathcal{A}(S)$, $S_1$ and
$S_2$ are the two members of $\mathcal{B}(S)$ containing $\alpha$. \item The graph $\mathcal{T}(S)$ is a tree. In
consequence, $\#\mathcal{B}(S) = 1 + \#\mathcal{A}(S)$.
\end{enumerate}
\end{lemma}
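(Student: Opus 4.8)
The plan is to induct on $\#\mathcal{A}(S)$. The base case $\#\mathcal{A}(S)=0$ is immediate: by Lemma \ref{lemma:4.3}, $S$ is primitive, so $\overline{S}=S$, and since a triangulated $d$-sphere cannot be a proper subcomplex of another triangulated $d$-sphere, the only induced primitive sub-$d$-sphere of $\overline{S}$ is $S$ itself. Thus $\mathcal{B}(S)=\{S\}$, the graph $\mathcal{T}(S)$ is a single vertex, and all four parts hold vacuously. For the inductive step (assume $d\geq 2$; the cases $d\leq 1$ are trivial) I would fix an $\alpha\in\mathcal{A}(S)$. Exactly as in the proof of Lemma \ref{lemma:4.3}, the induced $S^{d-1}_{d+1}$ on $\alpha$ splits $S$ into two $d$-balls $B_1,B_2$ with $B_1\cap B_2=S[\alpha]$; putting $S_i=B_i\cup\{\alpha\}$ gives $S=S_1\# S_2$ with $V(S_1)\cap V(S_2)=\alpha$ and $\overline{S}=\overline{S_1}\cup\overline{S_2}$. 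Everything reduces to two splitting identities: (i) $\mathcal{A}(S)=\mathcal{A}(S_1)\sqcup\mathcal{A}(S_2)\sqcup\{\alpha\}$, and (ii) $\mathcal{B}(S)=\mathcal{B}(S_1)\sqcup\mathcal{B}(S_2)$.

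For (i), the key point is that no induced $S^{d-1}_{d+1}$ other than $\alpha$ can straddle the cut. Indeed, if $\beta\in\mathcal{A}(S)$ had a vertex in $V(B_1)\setminus\alpha$ and a vertex in $V(B_2)\setminus\alpha$, then a $d$-subset of $\beta$ containing both would be a facet of $S$ meeting both sides; but every facet of $S$ lies in $B_1$ or $B_2$ and $V(B_1)\cap V(B_2)=\alpha$, a contradiction. Hence each $\beta\in\mathcal{A}(S)\setminus\{\alpha\}$ satisfies $\beta\subseteq V(S_1)$ or $\beta\subseteq V(S_2)$, and a short check using $\overline{S_i}[\alpha]=\overline{\alpha}$ and $B_i=S[V(B_i)]$ shows that such $\beta$ lie in $\mathcal{A}(S_i)$, and conversely $\mathcal{A}(S_i)\subseteq\mathcal{A}(S)$. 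This gives (i); in particular $\#\mathcal{A}(S_i)<\#\mathcal{A}(S)$, so the induction hypothesis applies to $S_1$ and $S_2$.

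The hard part is (ii): no \emph{primitive} induced sub-$d$-sphere of $\overline{S}$ may straddle the cut either. I expect this to be the main obstacle, because in $\overline{S}$ the face $\alpha$ has been filled in, so the naive ``$\alpha$ separates the sphere'' fails. The clean way through is the homological separation principle for spheres: if $U$ is an induced subcomplex of a triangulated $d$-sphere $P$ and $P[V(P)\setminus V(U)]$ is disconnected, then $\tilde\beta_{d-1}(U;\FF)>0$ (because $|P|\setminus|U|$ deformation retracts onto that induced subcomplex and Alexander duality in $S^{d}$ identifies $\tilde\beta_0$ of the complement with $\tilde\beta_{d-1}(U)$). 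Now suppose $P=\overline{S}[A]\in\mathcal{B}(S)$ met both sides. Since $\mathcal{A}(S)$ adds only $d$-faces, the edges of $\overline{S}$ are those of $S$, and no edge of $S$ joins $V(B_1)\setminus\alpha$ to $V(B_2)\setminus\alpha$; hence deleting the vertices $A\cap\alpha$ disconnects $P$. Applying the principle to $U=\overline{S}[A\cap\alpha]=\overline{A\cap\alpha}$ (a simplex, as $A\cap\alpha\subseteq\alpha\in\overline{S}$) would force $\tilde\beta_{d-1}$ of a contractible complex to be positive, a contradiction. So every member of $\mathcal{B}(S)$ has vertex set inside $V(S_1)$ or inside $V(S_2)$; the identity $\overline{S}[A]=\overline{S_i}[A]$ for $A\subseteq V(S_i)$ then gives $\mathcal{B}(S)=\mathcal{B}(S_1)\cup\mathcal{B}(S_2)$, and disjointness holds since a common member would have vertex set in $V(S_1)\cap V(S_2)=\alpha$, too few vertices for a $d$-sphere.

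With (i) and (ii) established, the four parts follow by bookkeeping, which I would organize by carrying one auxiliary clause through the induction: every genuine facet $\sigma\in S$ lies in exactly one member of $\mathcal{B}(S)$ (proved by partitioning the facets of $S$ into those of $B_1$ and of $B_2$ and invoking the hypothesis for $S_1,S_2$). Part (a) splits into the two same-side cases (the hypothesis, using $\mathcal{A}(S_i)\subseteq\mathcal{A}(S)$) and the cross case, where opposite-side members can share only a facet contained in $V(S_1)\cap V(S_2)=\alpha$, hence $\alpha$ itself. Part (b) for $\beta\in\mathcal{A}(S_i)$ is the hypothesis, since $\beta\not\subseteq\alpha$ forces $\beta$ into no opposite-side member, while for $\beta=\alpha$ it is exactly the auxiliary clause applied on each side; that the two members meet in $\overline{\alpha}$ follows from $\overline{S}[A_1]\cap\overline{S}[A_2]=\overline{S}[A_1\cap A_2]$ together with $A_1\cap A_2=\alpha$. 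Part (c) is then a formal consequence of (a) and (b). Finally, for (d), by (c) the edge set of $\mathcal{T}(S)$ is $\mathcal{A}(S)=\mathcal{A}(S_1)\sqcup\mathcal{A}(S_2)\sqcup\{\alpha\}$, where the $\mathcal{A}(S_i)$-edges are the edges of the tree $\mathcal{T}(S_i)$ and the $\alpha$-edge joins the single vertex of $\mathcal{T}(S_1)$ containing $\alpha$ to the single vertex of $\mathcal{T}(S_2)$ containing $\alpha$; so $\mathcal{T}(S)$ is two trees joined by one new edge, hence a tree, and $\#\mathcal{B}(S)=\#E(\mathcal{T}(S))+1=\#\mathcal{A}(S)+1$.
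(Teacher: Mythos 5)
Your proof is correct and takes essentially the same route as the paper's: the paper also inducts on $\#\mathcal{A}(S)$, splits $S = S'\# S''$ along a chosen $\alpha\in\mathcal{A}(S)$, and rests everything on the identities $\mathcal{A}(S)=\mathcal{A}(S')\sqcup\mathcal{A}(S'')\sqcup\{\alpha\}$ and $\mathcal{B}(S)=\mathcal{B}(S')\sqcup\mathcal{B}(S'')$, the only difference being that the paper dismisses these with ``it is easy to see'' while you actually prove them --- your facet-straddling argument for $\mathcal{A}$, the Alexander-duality separation argument for $\mathcal{B}$, and the auxiliary clause that each facet of $S$ lies in exactly one member of $\mathcal{B}(S)$ (which is precisely what part (b) requires at the new face $\alpha$) are all valid for $d\geq 2$. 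The one misstep is your parenthetical claim that the cases $d\leq 1$ are ``trivial'': for the $4$-cycle ($d=1$) one has $\overline{S}$ equal to the complete graph on four vertices, so $\mathcal{B}(S)$ consists of all four triangles, and parts (a) and (d) fail since $\#\mathcal{B}(S)=4\neq 3=1+\#\mathcal{A}(S)$; the lemma is simply false in dimension one, so the restriction $d\geq 2$ under which you argue is genuinely needed --- a caveat the paper's own statement also omits, harmlessly, since the lemma is only ever applied to $2$-spheres.
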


\begin{proof}
Induction on $m : = 1 + \#\mathcal{A}(S)$. If $m=1$, $\mathcal{A}(S)$ is empty, so that $S$ is primitive by Lemma
\ref{lemma:4.3}. So let $m >1$, and suppose that the result holds for all smaller values of $m$. In this case,
$\mathcal{A}(S)$ is non-empty. Take $\alpha\in\mathcal{A}(S)$. By the proof of Lemma \ref{lemma:4.3}, there are
triangulated $d$-spheres $S^{\prime}$, $S^{\prime\prime}$ such that $S^{\prime} \cap S^{\prime\prime} =
\overline{\alpha}$, $S = (S^{\prime} \cup S^{\prime\prime}) \setminus\{\alpha\}$. Thus, $S^{\prime},
S^{\prime\prime} \subseteq S \cup\{\alpha\} \subseteq \overline{S}$. Hence it is easy to see that $\mathcal{A}(S)
= \mathcal{A}(S^{\prime}) \sqcup \mathcal{A}(S^{\prime\prime}) \sqcup \{\alpha\}$, $\mathcal{B}(S) =
\mathcal{B}(S^{\prime}) \sqcup \mathcal{B}(S^{\prime\prime})$ and the graph $\mathcal{T}(S)$ is obtained from the
disjoint union of the graphs $\mathcal{T}(S^{\prime})$  and $\mathcal{T}(S^{\prime\prime})$ by adjoining a single
edge from a vertex of $\mathcal{T}(S^{\prime})$ to a vertex of $\mathcal{T}(S^{\prime\prime})$. By induction
hypothesis, the result is valid for $S^{\prime}$ and $S^{\prime\prime}$, and its validity for $S$ follows. This
completes the induction.
\end{proof}

\begin{definition} \label{def:c_sum}
{\rm Let $S$ be a triangulated $d$-sphere.  Any tree has a leaf (a vertex of degree one) and the deletion of a
leaf from a non-trivial tree leaves a subtree. Therefore, the members of $\mathcal{B}(S)$ may be ordered as $S_1,
\dots, S_m$ in such a way that, for each $i$, $1\leq i\leq m$, $S_i$ is a leaf of the induced subtree of
$\mathcal{T}(S)$ on $\{S_1, \dots, S_i\}$ (when $m\geq 2$, this ordering is not unique). For any such ordering,
we write $S = S_1\# S_2 \# \cdots \# S_m$, and say that $S$ is the {\em connected sum} of the $S_i$'s. Clearly,
we have $S_1 \# \cdots \# S_m = (S_1 \# \cdots \# S_{m-1}) \# S_m$. }
\end{definition}

As a special case of \cite[Theorem 8.5]{Ka}, Kalai proved a nice characterization of stacked 2-spheres. A
triangulated 2-sphere $S$ is stacked if and only if $S$ has no induced cycle of length $\geq 4$. The following
result is an improvement on this characterization.

\begin{theorem} \label{theorem:4.7}
Let $S$ be a triangulated $2$-sphere. Then $S$ is stacked if and only if it has no induced cycle of length $4$ or
$5$.
\end{theorem}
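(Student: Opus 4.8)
The plan is to prove the two implications separately, treating the forward direction as essentially free and concentrating all the work on the converse. If $S$ is stacked, then by Kalai's characterization quoted just above the theorem it has no induced cycle of length $\ge 4$, and in particular none of length $4$ or $5$; this disposes of the ``only if'' direction. For the ``if'' direction I would show, by induction on $f_0(S)$, that every triangulated $2$-sphere with no induced $4$- or $5$-cycle is stacked. The engine of the induction is the primitive decomposition: by the remark following Definition~\ref{def:primitive} I may write $S$ as a connected sum of primitive $2$-spheres, and by Lemma~\ref{lemma:4.3} (with $d=2$, so that $S^{\,d-1}_{d+1}=S^{\,1}_{3}$ is a $3$-cycle) a $2$-sphere is primitive exactly when it has no induced $3$-cycle.

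The primitive case is the base. If $S$ is primitive it has no induced $3$-cycle, and by hypothesis it has no induced $4$- or $5$-cycle, so it has no induced cycle of length $\le 5$ at all; Lemma~\ref{lemma:4.1} then forces $S = S^{\,2}_4$, which is stacked. If $S$ is not primitive, then it splits as a connected sum $S = S_1 \# S_2$ of two triangulated $2$-spheres along an induced $3$-cycle $abc$, each summand having strictly fewer vertices (since $f_0(S)=f_0(S_1)+f_0(S_2)-3$ with each $f_0(S_i)\ge 4$). I would then argue that each $S_i$ again has no induced $4$- or $5$-cycle, apply the induction hypothesis to conclude that $S_1$ and $S_2$ are stacked, and finally invoke Lemma~\ref{lemma:stacked_sphere} --- stacked spheres are precisely connected sums of copies of $S^{\,2}_4$ --- to conclude that $S=S_1\#S_2$ is again stacked.

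The crux, and the only step that is not pure bookkeeping, is the claim that the summands inherit the hypothesis: if $S$ has no induced $4$- or $5$-cycle then neither does $S_i$. Here the point is that $S_i$ differs from the induced subcomplex $S[V(S_i)]$ only by the single filled-in $2$-face $abc$ (in $S$ the triangle $abc$ is a non-face, being an induced $3$-cycle), and adding a $2$-face changes neither the $1$-skeleton nor the set of chords of a cycle. Concretely, I would verify that $S[V(S_i)]$ equals the disk $B_i$ cut off by $abc$ --- no edge of $S$ can join two vertices of $V(B_i)$ while leaving $B_i$, since such an edge would have to cross the separating triangle --- so that $S_i$ and $S[V(S_i)]$ share the same $1$-skeleton. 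Consequently a cycle in $S_i$ has a chord in $S_i$ if and only if it has a chord in $S$, so every induced cycle of $S_i$ is an induced cycle of $S$ of the same length; in particular an induced $4$- or $5$-cycle in $S_i$ would produce one in $S$. I expect this compatibility of induced cycles under the connected-sum splitting to be the main obstacle to make fully rigorous, whereas the remainder is a clean strong induction that terminates because the vertex count strictly decreases at each split.
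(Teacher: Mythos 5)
Your proof is correct and follows essentially the same route as the paper's: both rest on the decomposition into primitive summands, Lemma \ref{lemma:4.3} (primitive $=$ no induced $3$-cycle), Lemma \ref{lemma:4.1}, and Lemma \ref{lemma:stacked_sphere}, with your induction-one-split-at-a-time merely reorganizing the paper's all-at-once decomposition $S = S_1 \# \cdots \# S_k$. Your explicit verification that the summands inherit the no-induced-$4$-or-$5$-cycle property (via $S[V(S_i)] = B_i$ and equality of $1$-skeleta) is a point the paper asserts without detail, so spelling it out is a welcome addition rather than a deviation.
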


\begin{proof}
Write $S = S_1\# S_2\# \cdots\# S_k$, where the $S_i$'s are primitive 2-spheres.

If $S$ is stacked then, by Lemma \ref{lemma:stacked_sphere}, each $S_i$ is a copy of $S^{\hspace{.15mm}2}_4$. Let
$C$ be an induced cycle of length $\geq 4$ in $S$. Since an induced cycle of length $\geq 4$ in a connected sum
of two triangulated manifolds must be an induced cycle in one of the summands, it follows inductively that $C$ is
an induced cycle in one of the $S_i$'s. Since $S^{\hspace{.15mm}2}_4$ has no induced cycle at all, it follows
that $S$ has no induced cycle of length $\geq 4$. This proves the ``only if" part.

For the converse, assume that $S$ has no induced cycle of length 4 or 5. It follows that no $S_i$ has any induced
cycle of length 4 or 5. Being primitive, $S_i$ has no induced cycle of length 3 either (Lemma \ref{lemma:4.3}).
Thus, by Lemma \ref{lemma:4.1}, each $S_i$ is a copy of $S^{\hspace{.15mm}2}_4$. Therefore, by Lemma
\ref{lemma:stacked_sphere}, $S$ is stacked. This proves the ``if" part.
\end{proof}

Now we are ready to prove one of the main results of this paper.

\begin{theorem} \label{theorem:4.8}
A closed, triangulated $3$-manifold $M$ is tight with respect to some field $\FF$ with ${\rm char}(\FF) \neq 2$
if and only if $M$ is orientable, neighbourly and stacked.
\end{theorem}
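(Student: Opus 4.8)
The "if" direction is already available: by Proposition \ref{prop:BDTh2.24}, an orientable, neighbourly, stacked closed 3-manifold is $\FF$-tight for any field $\FF$, so nothing new is needed there.

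For the harder "only if" direction, I would start from the structural consequences of tightness already established. By Lemma \ref{lemma:2.2}, $M$ is neighbourly; by Lemma \ref{lemma:2.5}, $M$ is $\FF$-orientable, and since ${\rm char}(\FF) \neq 2$ this is equivalent to being orientable (by the universal coefficient remark before Corollary \ref{coro:2.7}). So it remains only to prove that $M$ is stacked. In light of Proposition \ref{prop:BDTh2.24}, it would suffice to show that $M$ is locally stacked, i.e., that every vertex link is a stacked 2-sphere; then the equivalence (i)$\Leftrightarrow$(ii) in that proposition upgrades $\FF$-tightness to stackedness. So my target reduces to: \emph{every vertex link of $M$ is a stacked 2-sphere.}

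Here is where Corollary \ref{coro:3.6}(b) and Theorem \ref{theorem:4.7} combine beautifully. Fix a vertex $x$ and let $S = M_x$, which is a triangulated 2-sphere (the link of a vertex in a closed 3-manifold). By Theorem \ref{theorem:4.7}, $S$ is stacked if and only if it has no induced cycle of length $4$ or $5$. But Corollary \ref{coro:3.6}(b), applied with ${\rm char}(\FF)\neq 2$, tells us $S$ has no induced cycle of length $\equiv 0,1,4,5,7,8,9,10 \pmod{12}$; in particular it forbids induced cycles of length $4$ and of length $5$. Therefore $S$ has neither an induced $4$-cycle nor an induced $5$-cycle, and Theorem \ref{theorem:4.7} gives that $S$ is a stacked 2-sphere. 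Since $x$ was arbitrary, $M$ is locally stacked.

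Finally I would assemble the pieces: $M$ is neighbourly, $\FF$-orientable (hence orientable), locally stacked, and $\FF$-tight, so all four hypotheses of Proposition \ref{prop:BDTh2.24} are met, and its equivalence (i)$\Leftrightarrow$(ii) yields that $M$ is stacked. Combined with neighbourliness and orientability already in hand, this completes the "only if" direction. \emph{The main obstacle} is really concentrated in the two cited black-box results—the forbidden-cycle arithmetic of Corollary \ref{coro:3.6}(b) and the sharp characterization of stacked $2$-spheres in Theorem \ref{theorem:4.7}—but both are proved earlier in the excerpt, so the proof itself should be short and essentially a matter of citing them in the right order; the only point demanding care is confirming that a vertex link in a closed $3$-manifold is genuinely a $2$-sphere so that Theorem \ref{theorem:4.7} applies.
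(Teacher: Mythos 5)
Your proposal is correct and follows essentially the same route as the paper's own proof: Lemmas \ref{lemma:2.2} and \ref{lemma:2.5} give neighbourliness and orientability, Corollary \ref{coro:3.6}(b) rules out induced $4$- and $5$-cycles in every vertex link, Theorem \ref{theorem:4.7} then makes each link a stacked $2$-sphere, and Proposition \ref{prop:BDTh2.24} upgrades locally stacked plus $\FF$-tight to stacked. The one point you flag as needing care---that vertex links in a triangulated closed $3$-manifold are $2$-spheres---is indeed exactly what the paper uses implicitly, so nothing is missing.
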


\begin{proof}
The ``if" part follows from Proposition \ref{prop:BDTh2.24}. To prove the ``only if" part, let $M$ be an
$\FF$-tight, closed, triangulated 3-manifold, ${\rm char}(\FF) \neq 2$. By Lemmas \ref{lemma:2.2} and
\ref{lemma:2.5}, $M$ is neighbourly and orientable. It remains to show that $M$ must be stacked.

Corollary \ref{coro:3.6} (b) shows that, for each vertex $x$ of $M$, the vertex link $M_x$ is a triangulated
2-sphere with no induced 4-cycle or 5-cycle. So, Theorem \ref{theorem:4.7} implies that each $M_x$ is a stacked
2-sphere. Thus, $M$ is locally stacked. The result now follows by Proposition \ref{prop:BDTh2.24}.
\end{proof}

\begin{corollary} \label{coro:4.9}
Let $M$ be a closed, triangulated $3$-manifold. If $M$ is tight with respect to a field $\FF$, where ${\rm
char}(\FF) \neq 2$, then $M$ is strongly minimal.
\end{corollary}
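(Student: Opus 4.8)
The plan is to chain together the structural characterization just obtained in Theorem \ref{theorem:4.8} with the known strong-minimality criterion recorded in Proposition \ref{prop:sminimal}. By hypothesis $M$ is $\FF$-tight for some field $\FF$ with ${\rm char}(\FF)\neq 2$, so the hypotheses of Theorem \ref{theorem:4.8} are met. I would therefore begin by invoking that theorem to conclude that $M$ is orientable, neighbourly and stacked. Of these three conclusions, the only one I actually need downstream is that $M$ is stacked.

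Next I would pass from ``stacked'' to ``locally stacked''. As observed immediately after Definition \ref{def:lstacked}, every stacked triangulated manifold is locally stacked, since each of its vertex links is then a stacked sphere (or stacked ball, but here $M$ is closed). Hence $M$ is a locally stacked, $\FF$-tight, triangulated closed $3$-manifold. With this in hand I would apply Proposition \ref{prop:sminimal}, which guarantees that every $\FF$-tight, locally stacked, triangulated closed manifold is strongly minimal; taking this manifold to be $M$ yields the desired conclusion that $M$ is strongly minimal.

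There is essentially no obstacle to overcome at this stage: the entire content of the corollary has already been packaged into the two ingredients being combined, with Theorem \ref{theorem:4.8} supplying the combinatorial structure and Proposition \ref{prop:sminimal} supplying the passage to strong minimality. The only thing to verify is the trivial implication ``stacked $\Rightarrow$ locally stacked'' needed to bridge the two results. It is precisely this combination that, together with Lemma \ref{lemma:2.8}, establishes the K\"{u}hnel--Lutz conjecture (Conjecture \ref{conj:KL}) in dimension three for fields of odd characteristic.
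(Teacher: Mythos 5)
Your proof is correct and follows exactly the paper's own argument: invoke Theorem \ref{theorem:4.8} to get that $M$ is stacked, then apply Proposition \ref{prop:sminimal}, using the observation (made after Definition \ref{def:lstacked}) that stacked implies locally stacked. The paper states this combination in one line; your version simply makes the bridging step explicit.
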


\begin{proof}
This result follows from Theorem \ref{theorem:4.8} and Proposition \ref{prop:sminimal}.
\end{proof}

\begin{corollary} \label{coro:4.10}
Let $M$ be a closed, triangulated $3$-manifold. If $M$ is tight with respect to a field of odd characteristic
then $M$ is tight with respect to all fields.
\end{corollary}

\begin{proof}
Let $M$ be $\FF$-tight, where ${\rm char}(\FF)$ is odd. Then, by Theorem \ref{theorem:4.8}, $M$ is orientable,
neighbourly and stacked. The result now follows from Proposition \ref{prop:BDTh2.24}.
\end{proof}

\begin{corollary} \label{coro:new}
Let $M$ be a closed, triangulated $3$-manifold. If $M$ is tight with respect to a field of odd characteristic
then either {\rm (i)} $M = S^{\hspace{0.15mm}3}_5$ or {\rm (ii)} $\beta :=(f_0(M)-4)(f_0(M)-5)/20$ is an integer
greater than one and $M$ triangulates $(S^{\hspace{0.15mm}2} \times S^{\hspace{0.1mm}1})^{\#\beta}$.
\end{corollary}

\begin{proof}
Let $M$ be $\FF$-tight, where ${\rm char}(\FF)$ is odd. Assume that $M \neq S^{\hspace{0.15mm}3}_5$. Since the
only tight triangulation of the sphere of dimension $d\geq 1$ is the standard sphere $S^{\hspace{0.15mm}d}_{d
+2}$, it follows that $M$ is not a triangulated $3$-sphere.

Since ${\rm char}(\FF) \neq 2$, by Theorem \ref{theorem:4.8}, $M$ is orientable, neighbourly and stacked.
Therefore, by Proposition \ref{prop:Walkup_class}, $M$ triangulates $(S^{\hspace{0.15mm}2} \times
S^{\hspace{0.1mm}1})^{\#k}$ for some positive integer $k$. Then, $\beta_1(M; \FF) = k$.

Since $M$ is stacked, by Proposition \ref{prop:BDTh2.24}, $(f_0(M)-4)(f_0(M)-5)= 20\beta_1(M; \FF) = 20k$. So
$(f_0(M)-4)(f_0(M)-5)/20 =k$. Thus, $\beta =(f_0(M)-4)(f_0(M)-5)/20$ is a positive integer and $M$ triangulates
$(S^{\hspace{0.15mm}2} \times S^{\hspace{0.1mm}1})^{\#\beta}$. Since there is no 9-vertex triangulation of
$S^{\hspace{0.15mm}2} \times S^{\hspace{0.1mm}1}$ (see \cite{Wa}), it follows that $\beta \neq 1$. This completes
the proof.
\end{proof}

\section{Characteristic two} 

By Lemma \ref{lemma:2.8}, a simplicial complex is tight with respect to a field of characteristic two if and only
if it is $\ZZ_2$-tight. So, without loss of generality, we take $\FF= \ZZ_2$ in this section. However, all the
results apply equally well to arbitrary fields of characteristic two. Here, we characterize the links of
$\ZZ_2$-tight, triangulated $3$-manifolds. This characterization is important since it leads to (a) severe
restrictions on the size and topology of such a triangulation (see Theorem \ref{theorem:5.1}, Corollary
\ref{coro:5.12} and the tables at the end of this section) and (b) a polynomial time algorithm to decide
tightness of $3$-manifolds which is described in detail in \cite{BBDSS}.

By Proposition \ref{prop:BDTh2.24}, all stacked, neighbourly, closed, triangulated 3-manifolds are $\ZZ_2$-tight.
Here is a partial converse. (In case $M$ is orientable, this result follows from Corollary \ref{coro:2.7}.)

\begin{theorem} \label{theorem:5.1}
Let $M$ be a $\ZZ_2$-tight, closed, triangulated $3$-manifold. If the torsion subgroup of $H_1(M; \ZZ)$ is of odd
order $($possibly trivial$)$, then $M$ is stacked $($and neighbourly$)$.
\end{theorem}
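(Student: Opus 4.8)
The plan is to reduce the theorem, via Proposition \ref{prop:BDTh2.24}, to showing that every vertex link of $M$ is a stacked $2$-sphere. Since $M$ is $\ZZ_2$-tight it is neighbourly by Lemma \ref{lemma:2.2}, and being a closed $3$-manifold its vertex links are triangulated $2$-spheres. By Theorem \ref{theorem:4.7}, such a link fails to be a stacked sphere precisely when it contains an induced $4$-cycle or an induced $5$-cycle. Corollary \ref{coro:3.6}(a) already excludes induced $4$-cycles (as $4\equiv 1\pmod 3$), so the entire difficulty is concentrated in ruling out induced $5$-cycles. My aim is to show that the existence of such a $5$-cycle would force $2$-torsion in $H_1(M;\ZZ)$, contradicting the hypothesis.

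First I would examine what an induced $5$-cycle produces. Suppose $C$ is an induced $5$-cycle in a link $M_x$. By Theorem \ref{theorem:3.5}, the induced subcomplex $X := M[V(C)\cup\{x\}]$ is a triangulated closed $2$-manifold on $6$ vertices. Being an induced subcomplex of the tight complex $M$, it is itself tight (Lemma \ref{lemma:2.3}), hence neighbourly, so $f_0(X)=6$, $f_1(X)=\binom{6}{2}=15$ and $f_2(X)=10$, giving Euler characteristic $\chi(X)=6-15+10=1$. Therefore $X$ is the unique $6$-vertex triangulation $\mathbb{R}P^2_6$ of the real projective plane. Thus every induced $5$-cycle in a link of $M$ yields an induced copy of $\mathbb{R}P^2_6$ in $M$.

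The heart of the argument is to convert the $\ZZ_2$-homology carried by this $\mathbb{R}P^2_6$ into genuine $\ZZ$-torsion of $M$. Let $\gamma$ generate $H_1(\mathbb{R}P^2_6;\ZZ)\cong\ZZ_2$, so $2\gamma=0$, and let $i\colon \mathbb{R}P^2_6\hookrightarrow M$ be the inclusion; then $2\,i_\ast\gamma=0$ in $H_1(M;\ZZ)$. On the other hand, the mod $2$ reduction of $\gamma$ generates $H_1(\mathbb{R}P^2_6;\ZZ_2)\cong\ZZ_2$, and since $\mathbb{R}P^2_6$ is an induced subcomplex of the $\ZZ_2$-tight complex $M$, the map $H_1(\mathbb{R}P^2_6;\ZZ_2)\to H_1(M;\ZZ_2)$ is injective. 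By naturality of the reduction homomorphism $H_1(-;\ZZ)\to H_1(-;\ZZ_2)$, the reduction of $i_\ast\gamma$ is therefore nonzero in $H_1(M;\ZZ_2)$, whence $i_\ast\gamma\neq 0$ in $H_1(M;\ZZ)$. Consequently $i_\ast\gamma$ is an element of order exactly $2$ in $H_1(M;\ZZ)$, so the torsion subgroup of $H_1(M;\ZZ)$ has even order, contradicting the hypothesis. I expect this torsion-transfer step to be the only delicate point: tightness guarantees injectivity over $\ZZ_2$ only, and it is precisely the naturality of the $\ZZ\to\ZZ_2$ reduction that upgrades a nonzero $\ZZ_2$-class to an honest order-$2$ element of integral homology.

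It remains to assemble the pieces. Under the hypothesis, no vertex link admits an induced $5$-cycle, and by Corollary \ref{coro:3.6}(a) none admits an induced $4$-cycle; hence Theorem \ref{theorem:4.7} makes each vertex link a stacked $2$-sphere, so $M$ is locally stacked. As $M$ is neighbourly, automatically $\ZZ_2$-orientable, locally stacked, and $\ZZ_2$-tight, Proposition \ref{prop:BDTh2.24} (applied with $\FF=\ZZ_2$) gives that $M$ is stacked, completing the proof. In the special case where $M$ is orientable one may instead shortcut the link analysis entirely: Lemma \ref{lemma:2.2} and Corollary \ref{coro:2.7} (with $p=2$) force $M$ to be $\QQ$-tight, and Theorem \ref{theorem:4.8} then applies directly.
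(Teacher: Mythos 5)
Your proof is correct and follows essentially the same route as the paper: reduce via Corollary \ref{coro:3.6}(a) and Theorem \ref{theorem:4.7} to ruling out induced $5$-cycles in vertex links, use Theorem \ref{theorem:3.5} to produce an induced $\mathbb{RP}^2_6$ in $M$, and convert its $\ZZ_2$-homology into an order-$2$ element of $H_1(M;\ZZ)$ via the injectivity supplied by $\ZZ_2$-tightness, then finish with Proposition \ref{prop:BDTh2.24}. The only cosmetic difference is that the paper carries out the torsion-transfer step with an explicit simplicial $1$-cycle (the boundary $ab+bc+ca$ of a non-triangle $abc$ of $\mathbb{RP}^2_6$) rather than your appeal to naturality of the mod-$2$ reduction homomorphism, and your closing remark on the orientable shortcut matches the paper's own parenthetical observation preceding the theorem.
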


\begin{proof}
Assume, on the contrary, that $M$ is not stacked. Then, by Proposition \ref{prop:BDTh2.24}, $M$ is not locally
stacked. So there exists a vertex $v$ whose link $M_v$ is not a stacked 2-sphere. By Theorem \ref{theorem:4.7}
and Corollary \ref{coro:3.6} (a), $M_v$ has an induced cycle $C$ of length 5. Then, by Theorem \ref{theorem:3.5},
the induced subcomplex $N:= M[\{v\}\cup V(C)]$ of $M$ is a closed, triangulated 2-manifold. Since $N$ is an
induced subcomplex of $M$, by Lemmas \ref{lemma:2.2} and \ref{lemma:2.3}, $N$ is neighbourly. Thus, $N$ is a
6-vertex, neighbourly, triangulated 2-manifold and hence is the 6-vertex triangulation $\mathbb{RP}^2_6$ of
$\mathbb{RP}^2$. Take a non-triangle $abc$ of $N = \mathbb{RP}^2_6$ (there are 10 of them). Then $\alpha= ab+bc+
ca$ can be viewed as a 1-cycle of $N$ with $\ZZ_2$-coefficient as well as with $\ZZ$-coefficient. In both views,
it is not a boundary. However $2\alpha$ is a boundary with $\ZZ$-coefficient. Since the map $H_1(N; \ZZ_2) \to
H_1(M; \ZZ_2)$, induced by the inclusion map $N \hookrightarrow M$, is injective, it follows that $[\alpha] \neq
0$ as an element of $H_1(M; \ZZ_2)$ and hence also as an element of $H_1(M; \ZZ)$. But, $2[\alpha] = 0$ in
$H_1(M; \ZZ)$. So $[\alpha]$ is an element of order 2 in $H_1(M; \ZZ)$. This is a contradiction to the assumption
on $H_1(M; \ZZ)$.
\end{proof}

\begin{lemma} \label{lemma:5.2}
Let $S$ be a primitive triangulation of $S^{\hspace{.15mm}2}$. Suppose $S\neq S^{\hspace{.15mm}2}_4$ and $S$ has
no induced $4$-cycle. Then,
\begin{enumerate}[{\rm (a)}]
\item All vertex links of $S$ are induced cycles in $S$.
\item Any two adjacent vertices of $S$ have exactly two common neighbours in $S$.
\end{enumerate}
\end{lemma}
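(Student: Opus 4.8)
The plan is to run the whole argument off a single structural fact: since $S$ is a \emph{primitive} triangulated $2$-sphere, Lemma~\ref{lemma:4.3} (with $d=2$, so that $S^{1}_{3}$ is the $3$-cycle) tells us that $S$ has no induced $3$-cycle. Combining this with the remark that a $3$-cycle is induced precisely when it fails to bound a $2$-face, I get the working principle $(\star)$: \emph{every $3$-cycle of $S$ bounds a triangle of $S$}. Throughout I will also use, without comment, the standing facts about a triangulated closed surface: every edge lies in exactly two $2$-faces, and every vertex link is a cycle.

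I would dispatch part (b) first, as it is immediate from $(\star)$. Given an edge $xy$, the two triangles on it, say $xyp$ and $xyq$ with $p\neq q$, already exhibit two common neighbours of $x$ and $y$. Conversely, if $z$ is \emph{any} common neighbour, then $x,y,z$ form a $3$-cycle, which by $(\star)$ bounds a triangle, so $xyz\in S$; as $xy$ lies in only the triangles $xyp$ and $xyq$, this forces $z\in\{p,q\}$. Hence $x,y$ have exactly the two common neighbours $p,q$.

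For part (a) I fix a vertex $v$, write $C=S_v$ for its link cycle of length $k=\deg(v)$, and aim to show $S[V(C)]=C$ via three steps. First, $k\ge 4$: if $C$ were a triangle $a\text{-}b\text{-}c\text{-}a$, then by $(\star)$ the $3$-cycle $abc$ bounds a triangle, so $vab,vbc,vca,abc$ form a copy of $S^{2}_4$ as a subcomplex of $S$; since a triangulation of $S^{2}$ cannot be a proper subcomplex of another, $S=S^{2}_4$, contrary to hypothesis. Second, $C$ has no chord: if $x,y\in V(C)$ were adjacent in $S$ with $xy$ not a cycle edge, then $vxy$ is a $3$-cycle, so by $(\star)$ it bounds a triangle, forcing $xy\in S_v=C$, a contradiction. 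Third, $S[V(C)]$ carries no $2$-face: a face $xyz$ with $x,y,z\in V(C)$ would, by the chord-free step, have all three edges among the edges of $C$, making $x,y,z$ pairwise adjacent in a cycle of length $\ge 4$, which is impossible. The three steps leave $S[V(C)]$ with exactly the vertices and edges of $C$ and nothing more, i.e.\ $S[V(C)]=C$, so $C$ is an induced cycle.

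The only real obstacle, and the only place any hypothesis beyond primitivity is used, is the degree-$3$ case in part (a): I must rule out a vertex whose link is a triangle, and this is exactly where $S\neq S^{2}_4$ enters, through the fact (already invoked in the proof of Lemma~\ref{lemma:4.1}) that a triangulation of $S^{2}$ is not a proper subcomplex of another. I would flag that the ``no induced $4$-cycle'' assumption plays no role in establishing (a) or (b); both follow from primitivity alone, together with $S\neq S^{2}_4$ for (a).
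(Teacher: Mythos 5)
Your proof is correct and follows essentially the same route as the paper's: primitivity via Lemma~\ref{lemma:4.3} gives that every $3$-cycle of $S$ bounds a triangle, the hypothesis $S\neq S^{\hspace{.15mm}2}_4$ rules out links that are $3$-cycles, and the chord-free argument then shows each vertex link is induced, with part (b) read off from the two triangles on an edge. Your closing observation is also accurate: the paper's own proof likewise never invokes the ``no induced $4$-cycle'' hypothesis, which is carried in the statement only because of how the lemma is applied later.
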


\begin{proof}
Let $x\in V(S)$. Since $S\neq S^{\hspace{.15mm}2}_4$ is primitive, Lemma \ref{lemma:4.3} implies that $S_x$ can
not be a 3-cycle. Let the vertices $y, z$ of the cycle $S_x$ be neighbours in $S$. Since $S$ is primitive, the
3-cycle $C_3(x, y, z)$ can not be induced in $S$. So the triangle $xyz$ is in $S$. Therefore, $y$ and $z$ are
neighbours in $S_x$. This implies that $S_x$ is induced in $S$. This proves part (a).

Let $x, y$ be adjacent vertices of $S$, and let $z$ be a common neighbour of $x$ and $y$. Then, as the 3-cycle
$C_3(x, y, z)$ is not induced in $S$, the triangle $xyz$ is in $S$. So $z$ is the third vertex of one of the two
triangles of $S$ through $xy$. Thus, $x$ and $y$ have exactly two common neighbours. This proves part (b).
\end{proof}

\begin{lemma} \label{lemma:5.3}
Let $S$ be a primitive triangulation of $S^{\hspace{.15mm}2}$ such that $S$ has no induced cycle of length
$\equiv 1$ $($mod $3)$. Then, either $S= S^{\hspace{.15mm}2}_4$ or all the vertices of $S$ have degree $2$ $($mod
$3)$.
\end{lemma}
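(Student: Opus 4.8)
The plan is to assume $S \neq S^2_4$ and prove that then every vertex has degree $\equiv 2 \pmod 3$. First I would note that, since $4 \equiv 1 \pmod 3$, the hypothesis in particular forbids induced $4$-cycles, so both parts of Lemma \ref{lemma:5.2} apply. By Lemma \ref{lemma:5.2}(a) each vertex link $S_x$ is an induced cycle, and its length is exactly $\deg(x)$; as $S$ has no induced cycle of length $\equiv 1 \pmod 3$, this forces $\deg(x) \not\equiv 1 \pmod 3$ for every vertex $x$. A degree-$3$ vertex would make $S_x$ an induced $3$-cycle, contradicting primitivity via Lemma \ref{lemma:4.3}; hence $\deg(x) \geq 4$, and in particular any vertex of degree $\equiv 0 \pmod 3$ has degree $\geq 6$. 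Everything then reduces to excluding the residue $0 \pmod 3$.

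The key device is a cycle built from two adjacent vertices. Let $x,u$ be adjacent, with common neighbours $a,b$ (Lemma \ref{lemma:5.2}(b)), so $S_x = u\mbox{-}a\mbox{-}v_3\mbox{-}\cdots\mbox{-}v_{d-1}\mbox{-}b\mbox{-}u$ and $S_u = x\mbox{-}a\mbox{-}w_1\mbox{-}\cdots\mbox{-}w_k\mbox{-}b\mbox{-}x$. I would form the cycle $\partial := a\mbox{-}v_3\mbox{-}\cdots\mbox{-}v_{d-1}\mbox{-}b\mbox{-}w_k\mbox{-}\cdots\mbox{-}w_1\mbox{-}a$ obtained from $S_x$ by rerouting around $u$ through the $a$-to-$b$ arc of $S_u$ that avoids $x$ (equivalently, $\partial$ is the boundary of the union of the closed stars of $x$ and $u$). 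Its length is $\deg(x)+\deg(u)-4$, and the $v_i$'s and $w_j$'s are distinct vertices, since each $w_j$ is a neighbour of $u$ but not of $x$.

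The main obstacle is to show that $\partial$ is an \emph{induced} cycle. Because $S_x$ and $S_u$ are induced, no chord can join two of the $v$'s, two of the $w$'s, or either of $a,b$ to a non-consecutive vertex of $\partial$; so the only candidate chord is an edge $v_iw_j$ joining an interior vertex $v_i$ of the $x$-arc to an interior vertex $w_j$ of the $u$-arc. But such an edge would produce the $4$-cycle $x\mbox{-}v_i\mbox{-}w_j\mbox{-}u\mbox{-}x$, whose diagonals $xw_j$ and $uv_i$ are non-edges (as $w_j$ is not a neighbour of $x$ and $v_i$ is not a neighbour of $u$); this induced $4$-cycle is forbidden. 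Hence $\partial$ has no chord and is induced. This is exactly the point where Lemma \ref{lemma:5.2}(b) and the no-induced-$4$-cycle hypothesis are doing the real work.

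It remains to harvest the conclusion. Since $\partial$ is induced, $\deg(x)+\deg(u)-4 \not\equiv 1 \pmod 3$, i.e. $\deg(x)+\deg(u) \not\equiv 2 \pmod 3$; as all degrees are $\equiv 0$ or $2 \pmod 3$, this rules out an edge between a vertex of degree $\equiv 0$ and one of degree $\equiv 2$ (because $0+2 \equiv 2$). The two residue classes are therefore joined by no edge, so connectedness of the $1$-skeleton puts all vertices in a single class mod $3$. The all-$\equiv 0$ possibility is then killed by a face count: every degree would be $\geq 6$, giving $6f_0(S) \leq \sum_x \deg(x) = 2f_1(S) = 6f_0(S)-12$, which is absurd. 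Hence every vertex has degree $\equiv 2 \pmod 3$, as required.
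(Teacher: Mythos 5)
Your proof is correct and is essentially the paper's own argument: both rest on Lemma \ref{lemma:5.2} and on splicing the links of two adjacent vertices $x,u$ into a cycle of length $\deg(x)+\deg(u)-4$, shown to be induced via the no-induced-$4$-cycle property (your ``non-edge diagonals'' phrasing is just the contrapositive of the paper's chord-forces-a-triangle claim), which forbids edges between the two degree classes and lets connectedness finish. The only minor divergence is at the very end: to kill the all-degrees-$\equiv 0$ $($mod $3)$ case you use an Euler-characteristic count, while the paper finds a degree-$3$ vertex (the minimum degree of a $2$-sphere triangulation is at most $5$) and uses primitivity to force $S=S^{\hspace{.15mm}2}_4$.
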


\begin{proof}
By assumption and Lemma \ref{lemma:5.2} (a), $V(S) = V_0 \sqcup V_2$, where $V_i$ consists of the vertices of
degree $i$ (mod 3), $i=0,2$. If possible, let $x\in V_0$ and $y\in V_2$ be such that $xy$ is an edge of $S$. By
Lemma \ref{lemma:5.2}, the cycles $S_x$ and $S_y$ are induced in $S$ (of length 0 (mod 3) and 2 (mod 3),
respectively) with exactly two common vertices, say $u$ and $v$. Then $u$ and $v$ are the two neighbours of $x$
in $S_y$ and of $y$ in $S_x$.

\smallskip

\noindent {\em Claim.} No vertex in $V(S_x)\setminus V(S_y)$ is adjacent to any vertex in $V(S_y)\setminus
V(S_x)$.

Indeed, if $a \in S_x$ is a neighbour of $b \in S_y$, then $S$ has the 4-cycle $C_4(x, a, b, y)$. Since $S$ has
no induced 3-cycle or 4-cycle, it follows that one of the triangles $xya$ and $xab$ is in $S$. Hence either $a$
or $b$ is in $S_x \cap S_y$. This proves the claim.

Therefore, if $C$ is the cycle obtained from $S_x\cup S_y$ by deleting the two vertices $x$, $y$ and the four
edges $xu$, $xv$, $yu$, $yv$, then $C$ is an induced cycle in $S$ of length $\equiv 0+2-4 \equiv 1$ (mod 3). This
is a contradiction. Therefore, no vertex in $V_0$ is adjacent to any vertex in $V_2$. Since $S$ is connected and
$V(S) = V_0\sqcup V_2$, it follows that $V_0 =\emptyset$ or $V_2 = \emptyset$. If $V_2 = \emptyset$ then the
degree of each vertex is 0 (mod 3) and hence $S$ has a vertex $z$ of degree 3. Since $S$ is primitive, $S_z$
bounds a triangle. Then, $S$ contains an $S^{\hspace{.15mm}2}_4$ and hence $S= S^{\hspace{.15mm}2}_4$. Otherwise,
$V(S) = V_2$.
\end{proof}

\begin{theorem} \label{theorem:5.4}
Up to isomorphism, $S^{\hspace{.15mm}2}_4$ and $I^2_{12}$ are the only two primitive triangulations of
$S^{\hspace{.15mm}2}$ with no induced cycle of length $\equiv 1$ $($mod $3)$.
\end{theorem}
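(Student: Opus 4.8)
The plan is to reduce the statement to a degree condition and then invoke the known uniqueness of the icosahedron. One direction is a finite verification: $S^2_4$ has no induced cycle at all, while for $I^2_{12}$ one checks directly that its induced cycles have lengths $5,6,8,9$, none $\equiv 1 \pmod 3$. For the substantive converse, let $S\neq S^2_4$ be a primitive triangulation of $S^{2}$ with no induced cycle of length $\equiv 1 \pmod 3$. By Lemma \ref{lemma:5.3}, every vertex of $S$ has degree $\equiv 2 \pmod 3$. Since every vertex of a triangulation of $S^{2}$ has degree $\geq 3$, this forces every degree to be $\geq 5$; and since the minimum degree of such a triangulation is at most $5$ (the Euler-characteristic bound already used in the proof of Lemma \ref{lemma:4.1}), the minimum degree is exactly $5$. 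My goal is then to show that $S$ is $5$-regular, for in that case $S = I^2_{12}$ by the well-known fact, recalled in the introduction, that the icosahedron is the unique triangulation of $S^{2}$ in which every vertex has degree $5$.

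To prove $5$-regularity I would argue by contradiction, exhibiting a forbidden induced cycle whenever some vertex has degree $\geq 8$ (the next admissible value). Since the set of degree-$5$ vertices is non-empty and $S$ is connected, if $S$ is not $5$-regular then some vertex $w$ of degree $D\geq 8$ appears; I would analyse its neighbourhood. By Lemma \ref{lemma:5.2}(a) the link $S_w = w_1w_2\cdots w_D$ is an induced $D$-cycle, and by Lemma \ref{lemma:5.2}(b) each boundary edge $w_iw_{i+1}$ lies in exactly one second triangle $w_iw_{i+1}b_i$. The first block of work is to record, purely from Lemma \ref{lemma:5.2}(b), the structural facts governing these apexes: each $b_i$ is a genuinely new vertex (not on $S_w$); one has $b_i\neq b_{i+1}$ and $b_i\neq b_{i+2}$; and $b_i$ is adjacent to $b_{i+1}$ precisely when $\deg(w_{i+1})=5$. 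A Gauss--Bonnet/Euler count on the second neighbourhood then shows that the outer ``second-layer'' cycle has length $\sum_i \deg(w_i) - 4\deg(w) \equiv -2\deg(w) \equiv 2 \pmod 3$, so this cycle itself is harmless; the contradiction must come from cutting it.

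The intended mechanism is a dichotomy. If two apexes at boundary-distance two are adjacent, say $b_{i-1}\sim b_{i+1}$, then $w_i$, $b_{i-1}$, $b_{i+1}$ together with a neighbouring boundary vertex close up into a $4$-cycle; the exact common-neighbour count of Lemma \ref{lemma:5.2}(b) rules out the two possible chords, so this $4$-cycle (length $4\equiv 1$) is induced, a contradiction. Otherwise the apexes along some stretch of the boundary are ``clean,'' and splicing an arc $b_i-b_{i+1}-\cdots-b_j$ of the outer cycle with the two spokes $w\,w_i$ and $w\,w_{j+1}$ yields a cycle through $w$; because the full outer cycle is $\equiv 2 \pmod 3$, a suitable such cut produces a cycle of length $\equiv 1$ (concretely, a $7$-cycle $w-w_i-b_i-b_{i+1}-b_{i+2}-b_{i+3}-w_{i+4}-w$ when three consecutive link-vertices have degree $5$), whose inducedness again follows from Lemma \ref{lemma:5.2}. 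Either outcome contradicts the hypothesis, forcing $5$-regularity.

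I expect the main obstacle to be exactly this last step: the local picture is controlled by the degrees of the link-vertices $w_i$, which in turn govern precisely which apexes $b_i$ are adjacent, so guaranteeing in \emph{every} degree pattern that one of the two forbidden cycles actually occurs (and is genuinely induced) requires a careful case analysis, or an inductive reduction on the number of vertices, rather than a single clean computation. Verifying inducedness throughout rests systematically on Lemma \ref{lemma:5.2} (all vertex links are induced cycles, and adjacent vertices have exactly two common neighbours), which is what eliminates the candidate chords. Once $5$-regularity is secured, the identification $S=I^2_{12}$ is immediate.
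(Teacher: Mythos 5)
Your overall frame---reduce to minimum degree $5$ via Lemma \ref{lemma:5.3}, show $5$-regularity, then invoke uniqueness of the icosahedron---is exactly the paper's, and your preparatory facts are all correct and provable as you indicate: $b_i\neq b_{i+1}$, $b_i\neq b_{i+2}$, and $b_i\sim b_{i+1}$ iff $\deg(w_{i+1})=5$ all follow from Lemma \ref{lemma:5.2}, and your Case (i) is sound (a chord of the $4$-cycle $w_i\mbox{-}b_{i-1}\mbox{-}b_{i+1}\mbox{-}w_{i+1}\mbox{-}w_i$ would give the adjacent pair $w_i,w_{i+1}$ a third common neighbour besides $w$ and $b_i$, contradicting Lemma \ref{lemma:5.2}(b)). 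The genuine gap is Case (ii), precisely where you predicted trouble, and it is fatal as written for two reasons. First, the dichotomy is not exhaustive: your concrete $7$-cycle needs three consecutive link-vertices of degree $5$, but nothing rules out a degree pattern around $S_w$ such as $5,8,5,8,\dots$ in which neither branch applies; for non-adjacent apexes ($\deg(w_{i+1})>5$) the ``outer cycle'' is only a closed walk that may repeat vertices, so ``a suitable such cut'' is undefined. Second, even in the favourable pattern, inducedness of $w\mbox{-}w_i\mbox{-}b_i\mbox{-}b_{i+1}\mbox{-}b_{i+2}\mbox{-}b_{i+3}\mbox{-}w_{i+4}\mbox{-}w$ does \emph{not} follow from Lemma \ref{lemma:5.2}: common-neighbour counting kills the chords $w_ib_{i+1}$ and $w_{i+4}b_{i+2}$, but not $w_ib_{i+2}$, $w_ib_{i+3}$, $w_{i+4}b_i$, $w_{i+4}b_{i+1}$, or $b_ib_{i+3}$, since these pairs are not both adjacent to a common edge of the triangulation. (A small factual slip, harmless here: $I^2_{12}$ has no induced cycles of length $8$ or $9$, since its independence number is $3$ and an induced $k$-cycle contains $\lfloor k/2\rfloor$ independent vertices; its induced cycles have lengths $5$ and $6$ only---the paper's parenthetical ``only $5$-cycles'' also misses the $6$-cycles, but in both cases no length is $\equiv 1 \pmod 3$, which is all that is needed.)

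The paper escapes exactly this obstacle by never attempting to exhibit an induced cycle of forbidden length. For a vertex $u$ of degree $\geq 8$ it chooses $v_1,v_2,w_1,w_2$ on $S_u$ (two pairs at distance $2$, the pairs mutually at distance $\geq 2$) and takes the boundaries $C_1,C_2$ of the discs $(u\ast S_u)\cup(v_i\ast S_{v_i})\cup(w_i\ast S_{w_i})$. Inclusion-exclusion together with Lemma \ref{lemma:5.2}(b) and Lemma \ref{lemma:5.3} shows each $C_i$ has length $\equiv 1\pmod 3$, so the \emph{hypothesis} forces each $C_i$ to carry a chord $a_ib_i$; the Claim inside the proof of Lemma \ref{lemma:5.3} then pins the endpoints of each chord to the interiors of two specific boundary arcs, and the two chords would have to cross in the interior of the complementary disc---impossible in a geometric realization. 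In other words, the hypothesis is used to \emph{produce} chords rather than to be contradicted by an induced cycle, and planarity supplies the final contradiction. That topological ingredient is what your plan is missing, and it cannot be recovered from the common-neighbour counts of Lemma \ref{lemma:5.2} alone.
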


\begin{proof}
Clearly, $S^{\hspace{.15mm}2}_4$ has no induced cycle whatsoever. It is easy to see that all the induced cycles
of $I^2_{12}$ are 5-cycles. (Indeed, these are precisely the twelve vertex links.) Thus, these two triangulations
of $S^{\hspace{.15mm}2}$ are primitive with no induced cycle of length 1 (mod 3).

Conversely, let $S$ be a primitive triangulation of $S^{\hspace{.15mm}2}$ with no induced cycle of length 1 (mod
3). Assume $S\neq S^{\hspace{.15mm}2}_4$. By Lemma \ref{lemma:5.3}, the minimum degree of the vertices of $S$ is
five. If all its vertices have degree 5, then $S= I^2_{12}$. So, suppose there is a vertex $u$ with $\deg(u) >
5$. By Lemma \ref{lemma:5.3}, $\deg(u) \geq 8$. So $S_u$ is a cycle of length $\geq 8$. Therefore, we may choose
vertices $v_1, v_2, w_1, w_2$ in $S_u$ such that $v_1, v_2$ are at distance 2, $w_1, w_2$ are at a distance 2,
and ${\rm dist}(v_i, w_j) \geq 2$, $i, j = 1, 2$, where all the distances are graphical distances measured along
the cycle $S_u$. Let $D := (u\ast S_u) \cup (v_1\ast S_{v_1}) \cup (v_2\ast S_{v_2}) \cup (w_1\ast S_{w_1})\cup
(w_2\ast S_{w_2})$ (see Figure 1).  The boundary of $D$ is the union of six paths in $S$ (drawn as circular arcs;
edges of $S$ are drawn as straight line segments). Here, the dotted paths might possibly be trivial (of length
zero), but the following argument goes through even in these degenerate cases. Since $S$ has no induced cycle of
length $\leq 4$, it is easy to see that these six paths intersect pairwise at most at common end points, and the
six end points marked in Fig. 1 are distinct vertices of $S$. Thus, the boundary of $D$ is a cycle and hence $D$
is a disc.

\begin{center}
{\includegraphics[height=5cm]{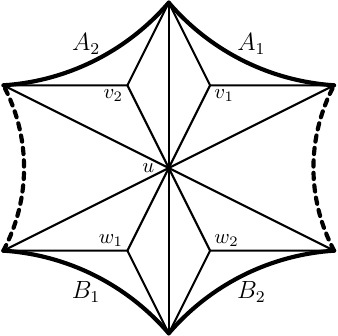}}

Figure 1: Disc $D$ in the proof of Theorem \ref{theorem:5.4}
\end{center}

Fix an index $i \in \{1, 2\}$. Consider the boundary $C_i$ of the disc $(u\ast S_u) \cup (v_i\ast S_{v_i}) \cup
(w_i\ast S_{w_i})$. Then the vertex set of $C_i$ is the union of the vertex sets of $S_u$, $S_{v_i}$, $S_{w_i}$
minus the three vertices $u, v_i, w_i$. By Lemma \ref{lemma:5.2} (b), $S_u$ and $S_{v_i}$ (as also $S_u$ and
$S_{w_i}$) have exactly two common vertices. Since $D$ is a disc, it follows that $S_{v_i}$ and $S_{w_i}$ have a
unique vertex (namely, u) in common. Also, by Lemma \ref{lemma:5.3}, each of $S_u$, $S_{v_i}$, $S_{w_i}$ have
length 2 (mod 3). Therefore, the inclusion exclusion principle shows that the length of $C_i$ is $\equiv 2+2+2 -
(2+2+1) -3 \equiv 1$ (mod 3). Therefore, $C_i$ is not an induced cycle of $S$. Thus, there is an edge $a_ib_i$ in
$S$ such that $a_i$ and $b_i$ are non-consecutive vertices in the cycle $C_i$. By the proof of Lemma
\ref{lemma:5.3}, no vertex of $S_{v_i}\setminus S_u$ is adjacent in $S$ with any vertex of $S_{u}\setminus
S_{v_i}$; also no vertex of $S_{w_i}\setminus S_u$ is adjacent in $S$ with any vertex of $S_{u}\setminus
S_{w_i}$. Therefore, $a_i \in A^{\hspace{-0.25mm}^\circ}_i$, $b_i\in B^{\hspace{-0.15mm}^\circ}_i$ (see Fig. 1).
That is, $a_i$ is an interior vertex of the path $A_i$ and $b_i$ is an interior vertex of the path $B_i$.

Now consider the disc $D^{\prime}$ in $S$ complementary to the disc $D$ (i.e., $D^{\prime} := S[V(S) \setminus
\{u, v_1, v_2, w_1, w_2\}]$). Then, $a_1b_1$ and $a_2b_2$ are in $D^{\prime}$. Clearly, $a_1b_1$ separates
$A^{\hspace{-0.25mm}^\circ}_2$ from $B^{\hspace{-0.25mm}^\circ}_2$ in $D^{\prime}$ (i.e.,
$A^{\hspace{-0.25mm}^\circ}_2$ and $B^{\hspace{-0.25mm}^\circ}_2$ are in different connected components of
$|D^{\prime}|\setminus|a_1b_1|$). Therefore, the geometric edges $|a_1b_1|$ and $|a_2b_2|$ intersect at an
interior point. This contradicts the very definition of the geometric realization of a simplicial complex. Thus,
there is no vertex $u$ of $S$ with $\deg(u) > 5$. This completes the proof.
\end{proof}

\begin{corollary} \label{coro:5.5}
For triangulations $S$ of $S^{\hspace{.15mm}2}$, the following conditions are equivalent\,:
\begin{enumerate}[{\rm (a)}]
\item
$S$ has no induced cycle of length $1$ $($mod $3)$,
\item
all the induced cycles of $S$ have length $3$ and $5$, and
\item
$S$ is a connected sum of $S^{\hspace{.15mm}2}_4$'s and $I^2_{12}$'s.
\end{enumerate}
\end{corollary}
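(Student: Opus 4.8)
The plan is to prove Corollary~\ref{coro:5.5} by establishing the cycle of implications (a) $\Rightarrow$ (c) $\Rightarrow$ (b) $\Rightarrow$ (a), leaning heavily on the structural results already assembled. The statement concerns an arbitrary triangulated $2$-sphere $S$, so the natural first move is to invoke the canonical decomposition $S = S_1 \# S_2 \# \cdots \# S_k$ into primitive summands furnished by Notation~\ref{notation:c_sum}, Lemma~\ref{lemma:c_sum} and Definition~\ref{def:c_sum}. This decomposition is what lets us reduce each global statement about induced cycles in $S$ to a statement about induced cycles in the individual primitive pieces $S_i$.

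First I would prove (a) $\Rightarrow$ (c). Assuming $S$ has no induced cycle of length $\equiv 1 \pmod 3$, the key observation is that any induced cycle of length $\geq 4$ in a connected sum must already be an induced cycle in one of the summands (the same inductive fact already used in the proof of Theorem~\ref{theorem:4.7}). A short induction on the number of summands then shows that each primitive piece $S_i$ inherits the property of having no induced cycle of length $\equiv 1 \pmod 3$. I must also check that the gluing $3$-cycles themselves do not create new induced cycles of the forbidden length --- but a connecting $S^{1}_{3} = S^{\hspace{.15mm}1}_{3}$ bounds a triangle in each summand after the connected sum is formed, so it is not induced. Having localized to primitive pieces, Theorem~\ref{theorem:5.4} applies verbatim: each $S_i$ is isomorphic to $S^{\hspace{.15mm}2}_4$ or to $I^2_{12}$, and hence $S$ is a connected sum of $S^{\hspace{.15mm}2}_4$'s and $I^2_{12}$'s, which is exactly~(c).

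Next (c) $\Rightarrow$ (b). Here I would again use that an induced cycle of length $\geq 4$ in the connected sum lives entirely in one summand; an induced $3$-cycle in $S$ either lives in a summand or is one of the connecting cycles, but the latter bound triangles and so are not induced. Since $S^{\hspace{.15mm}2}_4$ has no induced cycle at all and every induced cycle of $I^2_{12}$ is a $5$-cycle (these being precisely the twelve vertex links, as noted in the proof of Theorem~\ref{theorem:5.4}), it follows that the only induced cycle lengths occurring in $S$ are $3$ and $5$, giving~(b). Finally (b) $\Rightarrow$ (a) is immediate, since lengths $3$ and $5$ are congruent to $0$ and $2$ modulo $3$ respectively, so neither is $\equiv 1 \pmod 3$.

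The main obstacle, such as it is, lies in the bookkeeping of the connected-sum reduction: I must argue carefully that no induced cycle of the whole sphere $S$ is lost or gained relative to the summands, in particular that the triangles along which connected sums are performed prevent the connecting $3$-cycles from ever being induced in $S$, while simultaneously every induced cycle of each summand $S_i$ of length $\geq 4$ remains induced in $S$. Once this compatibility between induced cycles of $S$ and those of its primitive pieces is nailed down, the corollary is essentially a packaging of Theorem~\ref{theorem:5.4} together with the elementary arithmetic of residues modulo $3$.
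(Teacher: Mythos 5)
Your proposal is correct and follows essentially the same route as the paper: decompose $S$ into primitive summands, transfer the induced-cycle conditions between $S$ and its summands, apply Theorem~\ref{theorem:5.4}, and finish with trivial arithmetic modulo $3$. The paper proves exactly the same three implications, merely stated in the order (c)$\Rightarrow$(b), (b)$\Rightarrow$(a), (a)$\Rightarrow$(c).

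One assertion in your write-up is false, though harmlessly so: you claim that a connecting $3$-cycle ``bounds a triangle in each summand after the connected sum is formed, so it is not induced.'' In fact the connected sum $(S_1\cup S_2)\setminus\{\alpha\}$ \emph{deletes} the common facet $\alpha$, so the boundary of $\overline{\alpha}$ spans no $2$-face of $S$, and these connecting $3$-cycles \emph{are} induced in $S$ --- this is precisely the ``only if'' direction of Lemma~\ref{lemma:4.3}, which characterizes non-primitivity by the presence of an induced $S^{\hspace{.15mm}1}_{3}$. The slip does not damage your argument: induced $3$-cycles are explicitly allowed by condition (b), and $3\equiv 0 \pmod 3$, so the (genuinely induced) connecting cycles violate neither (a) nor (b). You should simply delete that justification; all your proof actually requires is that induced cycles of length at least $4$ pass between $S$ and its summands (in both directions, as you note in your final paragraph), since every cycle of length $\equiv 1 \pmod 3$ has length at least $4$.
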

(The statement in (c) includes the possibility that $S$ is either icosian or stacked.)

\begin{proof}
Let $S$ be as in (c). It follows that the only possible induced cycles in the summands  are 5-cycles. Therefore,
the only induced cycles in $S$ are 3-cycles and 5-cycles. Thus, (c) $\Rightarrow$ (b). Trivially, (b)
$\Rightarrow$ (a).

Now suppose (a) holds. Write $S=S_1\# S_2\#\cdots\# S_k$, where each $S_i$ is primitive. Since $S$ has no induced
cycle of length 1 (mod 3), it follows that no $S_i$ has an induced cycle of length 1 (mod 3). Therefore, by
Theorem \ref{theorem:5.4}, each $S_i$ is $S^{\hspace{.15mm}2}_4$ or $I^2_{12}$. Hence $S$ is as in (c). Thus, (a)
$\Rightarrow$ (c).
\end{proof}

Now we can prove the second main result of this paper.

\begin{theorem} \label{theorem:5.6}
Let $M$ be a $\ZZ_2$-tight, closed, triangulated $3$-manifold. Then each vertex link of $M$ is a connected sum of
$S^{\hspace{.15mm}2}_4$'s and $I^2_{12}$'s.
\end{theorem}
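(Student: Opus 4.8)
The plan is to reduce the statement to a condition on individual vertex links and then invoke the classification already established. Since $M$ is a triangulated closed $3$-manifold, each vertex link $M_x$ is a triangulated $2$-sphere. So it suffices to show that every such $M_x$ is a connected sum of copies of $S^{\hspace{.15mm}2}_4$ and $I^2_{12}$, and then remark that $x$ was arbitrary.

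First I would fix an arbitrary vertex $x$ of $M$ and set $S := M_x$. Because $M$ is $\ZZ_2$-tight and ${\rm char}(\ZZ_2) = 2$, Corollary \ref{coro:3.6} (a) applies directly to the link of $x$: the $2$-sphere $S$ has no induced cycle of length $\equiv 1 \pmod 3$. This is the only point at which the tightness hypothesis on $M$ is used; everything afterwards is purely about the combinatorics of the sphere $S$.

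Next I would feed this information into Corollary \ref{coro:5.5}. Since $S$ is a triangulation of $S^{\hspace{.15mm}2}$ satisfying condition (a) of that corollary, the implication (a) $\Rightarrow$ (c) immediately gives that $S$ is a connected sum of $S^{\hspace{.15mm}2}_4$'s and $I^2_{12}$'s. As $x$ was an arbitrary vertex of $M$, this description holds for every vertex link, which is exactly the assertion.

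The genuine difficulty does not reside in this short assembly but upstream, in the classification (Theorem \ref{theorem:5.4}) underlying Corollary \ref{coro:5.5}: the hard part is to prove that $S^{\hspace{.15mm}2}_4$ and $I^2_{12}$ are the \emph{only} primitive $2$-spheres with no induced cycle of length $1 \pmod 3$, which rests on the degree analysis of Lemmas \ref{lemma:5.2}--\ref{lemma:5.3} and the planarity (disc-intersection) argument in the proof of Theorem \ref{theorem:5.4}. Once that classification and the tightness-to-link passage of Corollary \ref{coro:3.6} (a) are in hand, Theorem \ref{theorem:5.6} follows with no further computation.
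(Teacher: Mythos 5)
Your proof is correct and follows exactly the paper's own route: the paper likewise deduces Theorem \ref{theorem:5.6} immediately from Corollary \ref{coro:3.6} (a) applied to each vertex link together with the implication (a) $\Rightarrow$ (c) of Corollary \ref{coro:5.5}. Your closing remark correctly identifies that the real work lies upstream in Theorem \ref{theorem:5.4}, not in this assembly step.
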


\begin{proof}
This is an immediate consequence of Corollary \ref{coro:3.6} (a) and Corollary \ref{coro:5.5}.
\end{proof}

The following result provides a recursive procedure for the computation of the sigma-star vector.

\begin{theorem} \label{theorem:5.7}
Let $X_1$ and $X_2$ be induced subcomplexes of a simplicial complex $X$ and $\FF$ be a field. Suppose $X =
X_1\cup X_2$ and $Y = X_1\cap X_2$. If $Y$ is $k$-neighbourly, $k\geq 2$, then $\sigma_i^{\ast}(X;\FF) =
\sigma_i^{\ast}(X_1; \FF) + \sigma_i^{\ast}(X_2;\FF) - \sigma_i^{\ast}(Y;\FF)$ for $0\leq i\leq k-2$.
\end{theorem}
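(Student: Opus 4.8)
The plan is to reduce the asserted identity to a single relation among reduced Betti numbers of induced subcomplexes that holds subset-by-subset, and then to average it against the weights $\binom{f_0(X)}{\#(A)}^{-1}$. Write $V=V(X)$, $V_j=V(X_j)$, $V_0=V(Y)$, and let $n,n_1,n_2,n_0$ be the respective numbers of vertices, so $n=n_1+n_2-n_0$. For $A\subseteq V$ put $A_1=A\cap V_1$, $A_2=A\cap V_2$, $A_0=A\cap V_0$. Since $X_1,X_2,Y$ are induced, $X[A]=X_1[A_1]\cup X_2[A_2]$ with $X_1[A_1]\cap X_2[A_2]=Y[A_0]$. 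First I would establish, for every $A\subseteq V$ and every $i$ with $0\le i\le k-2$, the pointwise relation
\[
\tilde{\beta}_i(X[A])=\tilde{\beta}_i(X_1[A_1])+\tilde{\beta}_i(X_2[A_2])-\tilde{\beta}_i(Y[A_0]) \qquad (\ast).
\]
Because $Y$ is $k$-neighbourly, every subset of $A_0$ of size $\le k$ is a face, so $Y[A_0]$ contains the full $(k-1)$-skeleton on $A_0$; hence $\tilde{H}_j(Y[A_0];\FF)=0$ for $-1\le j\le k-2$ whenever $A_0\neq\emptyset$. Feeding this into the reduced Mayer--Vietoris sequence of $X[A]=X_1[A_1]\cup X_2[A_2]$ annihilates both connecting homomorphisms in the range $1\le i\le k-2$ and forces $\tilde{\beta}_i(Y[A_0])=0$ there, so $(\ast)$ holds (with its last term vanishing) for $i\ge 1$.

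The degree $i=0$ instance is the delicate point and must be handled separately. Here the reduced sequence additionally involves $\tilde{H}_{-1}(Y[A_0])$, which is nonzero exactly when $A_0=\emptyset$ and $X[A]$ splits as a genuine disjoint union $X_1[A_1]\sqcup X_2[A_2]$. The clean way is to read $(\ast)$ at $i=0$ as inclusion--exclusion for the number of connected components: since $Y$ is at least $2$-neighbourly, $Y[A_0]$ is connected whenever nonempty, and a direct count gives $\beta_0(X[A])=\beta_0(X_1[A_1])+\beta_0(X_2[A_2])-\beta_0(Y[A_0])$ uniformly in $A$. I expect reconciling this component-count form with the reduced $\tilde{\beta}_0$ used elsewhere (so that the nonvanishing term $-\sigma_0^{\ast}(Y)$ appears correctly rather than dropping out) to be the main obstacle; a small explicit example shows that the degree-$0$ overlap contribution is genuinely present and is precisely what the $-\sigma_i^{\ast}(Y)$ summand records.

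With $(\ast)$ in hand for all $0\le i\le k-2$, I would multiply by $\binom{n}{\#A}^{-1}$ and sum over all $A\subseteq V$, obtaining $\sigma_i(X)=\Sigma^{(1)}+\Sigma^{(2)}-\Sigma^{(0)}$, where $\Sigma^{(1)}=\sum_{A}\tilde{\beta}_i(X_1[A_1])\big/\binom{n}{\#A}$, and similarly for $\Sigma^{(2)},\Sigma^{(0)}$. Regrouping $\Sigma^{(1)}$ by the trace $B=A_1\subseteq V_1$ (the remaining coordinates ranging freely over the $m:=n-n_1$ vertices of $V\setminus V_1$) reduces the inner sum to the combinatorial identity
\[
\sum_{c=0}^{m}\frac{\binom{m}{c}}{\binom{n}{b+c}}=\frac{n+1}{(r+1)\binom{r}{b}},\qquad r:=n-m,
\]
which I would prove from $\binom{n}{\ell}^{-1}=(n+1)\int_0^1 t^{\ell}(1-t)^{n-\ell}\,dt$ together with the binomial theorem. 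With $r=n_1$ this gives $\Sigma^{(1)}=\tfrac{n+1}{n_1+1}\sigma_i(X_1)$, and likewise $\Sigma^{(2)}=\tfrac{n+1}{n_2+1}\sigma_i(X_2)$ and $\Sigma^{(0)}=\tfrac{n+1}{n_0+1}\sigma_i(Y)$. Dividing through by $n+1$ produces exactly $\sigma_i^{\ast}(X)=\sigma_i^{\ast}(X_1)+\sigma_i^{\ast}(X_2)-\sigma_i^{\ast}(Y)$.

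In summary, the homological input is routine Mayer--Vietoris once $k$-neighbourliness of $Y$ is used to kill its low-dimensional reduced homology, and the computational heart is the Beta-integral evaluation of the inner binomial sum. The normalization by $1+f_0$ is what makes the argument snap shut: it converts each factor $\tfrac{n+1}{n_j+1}$ into the passage from $\binom{n}{\cdot}^{-1}$-weights to $\binom{n_j}{\cdot}^{-1}$-weights, so that $\Sigma^{(j)}/(n+1)=\sigma_i^{\ast}(X_j)$ on the nose. The one step demanding genuine care, and the place I would concentrate the proof, is the $i=0$ bookkeeping described above.
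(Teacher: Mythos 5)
Your proposal is correct and is essentially the paper's own proof: the paper establishes exactly your pointwise relation $(\ast)$ via Mayer--Vietoris and then performs the same weighted summation over $A\subseteq V(X)$, regrouping by traces and evaluating the inner sums by the very same binomial identity (which it quotes as ``well known'' rather than deriving from the Beta integral as you do). The one step you flag as the ``main obstacle'' --- reconciling the component count with the reduced $\tilde{\beta}_0$ at $i=0$ --- dissolves once you note the convention built into the paper's $\sigma$-vector: the $A=\emptyset$ term contributes $\tilde{\beta}_0(X[\emptyset])=-1$, a convention forced by the values the paper relies on elsewhere, e.g.\ $\sigma_0^{\ast}(S^{\hspace{.15mm}d-1}_{d+1})=-1/(d+2)$ in Corollary \ref{coro:5.8} and $\sigma_0^{\ast}(S^{\hspace{.15mm}2}_4)=-1/5$ in Corollary \ref{coro:5.10}. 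With that convention $\tilde{\beta}_0=\beta_0-1$ holds for every induced subcomplex, empty or not, so subtracting $1$ from both sides of your uniform count $\beta_0(X[A])=\beta_0(X_1[A_1])+\beta_0(X_2[A_2])-\beta_0(Y[A_0])$ yields the reduced identity verbatim, including in the case $A_0=\emptyset$, $A_1\neq\emptyset\neq A_2$, which is precisely where naive reduced Mayer--Vietoris is off by one (and which the published proof silently glosses over). Under the more standard convention $\tilde{\beta}_0(X[\emptyset])=0$ both $(\ast)$ at $i=0$ and the theorem itself would fail --- e.g.\ for the $3$-vertex path split along its middle vertex --- so your instinct that the degree-$0$ bookkeeping is where the content lies was sound; it just has a one-line resolution.
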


\begin{proof}
Let us write $m_1 = f_0(X_1)$, $m_2 = f_0(X_2)$, $m = f_0(Y)$. Thus, $f_0(X) = m_1+m_2-m$. For notational
convenience, we write $\tilde{\beta}_i(A)$ for $\tilde{\beta}_i(X[A]; \FF)$,  $A\subseteq V(X)$. Note that any
subset $A$ of $V(X)$ can be uniquely written as $A = A_1\sqcup B \sqcup A_2$, where $A_1 \subseteq V(X_1)
\setminus V(Y)$, $B \subseteq V(Y)$ and $A_2 \subseteq V(X_2) \setminus V(Y)$. Since $Y$ is $k$-neighbourly, it
follows from the exactness of the Mayer-Vietoris sequence that
\begin{align*}
\tilde{\beta}_i(A_1\sqcup B\sqcup A_2) = \tilde{\beta}_i(A_1\sqcup B) + \tilde{\beta}_i(A_2\sqcup B) -
\tilde{\beta}_i(B) & \mbox{ for } 0\leq i\leq k-2.
\end{align*}
Therefore, we can compute

\begin{align*}
\sigma_i^{\ast}(X; \FF) & = \frac{1}{m_1+m_2-m+1}\sum_{A_1, A_2, B}\frac{\tilde{\beta}_i(A_1\sqcup B) +
\tilde{\beta}_i(A_2\sqcup B) -\tilde{\beta}_i(B)}{\binom{m_1+m_2-m}{\#(A_1\sqcup A_2\sqcup B)}} \\
&=  \frac{1}{m_1+m_2-m+1}\left[\sum_{A_1, B}\tilde{\beta}_i(A_1\sqcup B) \sum_{A_2}
\frac{1}{\binom{m_1+m_2-m}{\#(A_1\sqcup B) + \#(A_2)}} \right. \\
& \left. + \sum_{A_2, B}\tilde{\beta}_i(A_2\sqcup B) \sum_{A_1} \frac{1}{\binom{m_1+m_2-m}{\#(A_2\sqcup B)
+ \#(A_1)}} - \sum_{B}\tilde{\beta}_i(B) \sum_{A_1, A_2}
\frac{1}{\binom{m_1+m_2-m}{\#(B) + \#(A_1\sqcup A_2)}}\right]\\
&=  \frac{1}{m_1+m_2-m+1}\left[\sum_{k=0}^{m_1}\sum_{\stackrel{A_1, B}{\#(A_1\sqcup B)=k}}
\hspace{-6mm}\tilde{\beta}_i(A_1 \sqcup B) \sum_{\ell = 0}^{m_2-m}
\frac{\binom{m_2-m}{\ell}}{\binom{m_1+m_2-m}{\ell +k}} \right. \\
& \hspace{45mm} + \sum_{k=0}^{m_2}\sum_{\stackrel{A_2, B}{\#(A_2\sqcup B)=k}}
\hspace{-6mm}\tilde{\beta}_i(A_2 \sqcup B)
\sum_{\ell = 0}^{m_1-m} \frac{\binom{m_1-m}{\ell}}{\binom{m_1+m_2-m}{\ell +k}} \\
 & \hspace{50mm} \left. - \sum_{k=0}^{m}\sum_{\stackrel{B}{\#(B)=k}} \hspace{-3mm}\tilde{\beta}_i(B)
 \sum_{\ell = 0}^{m_1+m_2-2m} \frac{\binom{m_1+m_2-2m}{\ell}}{\binom{m_1+m_2-m}{\ell +k}}\right]\\
&=  \frac{1}{m_1+1}\sum_{k=0}^{m_1}\sum_{\stackrel{A_1, B}{\#(A_1\sqcup B)=k}} \hspace{-5mm}
\frac{\tilde{\beta}_i(A_1\sqcup B)}{\binom{m_1}{k}}  + \frac{1}{m_2+1}\sum_{k=0}^{m_2}\sum_{\stackrel{A_2,
B}{\#(A_2\sqcup B)=k}} \hspace{-5mm}
\frac{\tilde{\beta}_i(A_2 \sqcup B)}{\binom{m_2}{k}} \\
 & \hspace{60mm} - \frac{1}{m+1}\sum_{k=0}^{m}\sum_{\stackrel{B}{\#(B)=k}} \hspace{-3mm}
 \frac{\tilde{\beta}_i(B)}{\binom{m}{k}}\\
& =  \sigma_i^{\ast}(X_1; \FF) + \sigma_i^{\ast}(X_2; \FF) - \sigma_i^{\ast}(Y; \FF).
 \end{align*}
\end{proof}

In the penultimate step of the above computation, we have used the following well known identity to compute the
inner sums. For any three non-negative integers $p, q, r$, we have
\begin{align}
\sum_{i=0}^p \frac{\binom{p}{i}}{\binom{p+q+r}{i+r}} = \frac{p+q+r+1}{q+r+1}\times\frac{1}{\binom{q+r}{r}}.
\nonumber
\end{align}

As a particular case of Theorem \ref{theorem:5.7}, we have a formula for the sigma-star vector of a connected
sum. Since $\sigma_0^{\ast}(X; \FF)$ is independent of the field $\FF$, we denote it by $\sigma_0^{\ast}(X)$ in
the following.

\begin{corollary} \label{coro:5.8}
For any two triangulated $d$-spheres $S_1$, $S_2$ of dimension $d\geq 2$ and any field $\FF$, we have
\begin{enumerate}[{\rm (a)}]
\item $\sigma_0^{\ast}(S_1\# S_2) = \sigma_0^{\ast}(S_1) + \sigma_0^{\ast}(S_2) + \frac{1}{d+2}$, and

\item $\sigma_i^{\ast}(S_1\# S_2; \FF) = \sigma_i^{\ast}(S_1; \FF) + \sigma_i^{\ast}(S_2; \FF)$ for $1\leq i\leq
d-2$.
\end{enumerate}
\end{corollary}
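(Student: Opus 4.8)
The plan is to deduce the corollary from Theorem \ref{theorem:5.7} by realising the connected sum as the union of two induced subcomplexes. Write $X = S_1\#S_2$, let $\alpha$ be the facet along which the sum is formed, and set $X_1 = X[V(S_1)]$, $X_2 = X[V(S_2)]$ and $Y = X[V(\alpha)]$. Every face of $X$ lies in $S_1$ or in $S_2$, so $X = X_1\cup X_2$; and a face lying in both $X_1$ and $X_2$ has all its vertices in $V(S_1)\cap V(S_2) = V(\alpha)$, so $Y = X_1\cap X_2$. Moreover $X_1$ and $X_2$ are exactly the two triangulated $d$-balls obtained from $S_1$ and $S_2$ by deleting the open facet $\alpha$, and $Y = \partial\bar\alpha = S^{d-1}_{d+1}$ is the boundary of a $d$-simplex, which is $d$-neighbourly. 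Hence Theorem \ref{theorem:5.7} applies with $k = d$ and gives $\sigma_i^{\ast}(X;\FF) = \sigma_i^{\ast}(X_1;\FF) + \sigma_i^{\ast}(X_2;\FF) - \sigma_i^{\ast}(Y;\FF)$ for all $0\le i\le d-2$. This single identity will yield both parts once the three terms on the right are identified.

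Next I would replace the balls $X_1, X_2$ by the spheres $S_1, S_2$. For $j = 1,2$, the complex $X_j$ differs from $S_j$ only in the single $d$-face $\alpha$, so for every $A\subseteq V(S_j)$ the complexes $X_j[A]$ and $S_j[A]$ differ by at most that one top-dimensional face. Deleting a $d$-face can change only $\tilde{\beta}_{d-1}$ and $\tilde{\beta}_d$, hence $\tilde{\beta}_i(X_j[A]) = \tilde{\beta}_i(S_j[A])$ for every $A$ whenever $i\le d-2$. Since $f_0(X_j) = f_0(S_j)$, this gives $\sigma_i^{\ast}(X_j;\FF) = \sigma_i^{\ast}(S_j;\FF)$ for $0\le i\le d-2$ and $j = 1,2$.

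Finally I would compute $\sigma_i^{\ast}(Y;\FF) = \sigma_i^{\ast}(S^{d-1}_{d+1};\FF)$ directly. Every induced subcomplex of $S^{d-1}_{d+1}$ on a proper, nonempty subset of its $d+1$ vertices is a full simplex and hence acyclic, while $S^{d-1}_{d+1}$ itself carries reduced homology only in degree $d-1$. Thus for $1\le i\le d-2$ every summand vanishes and $\sigma_i^{\ast}(S^{d-1}_{d+1};\FF) = 0$, which combined with the two previous steps gives part (b). For $i = 0$ the only nonzero contribution comes from the empty subset, with $\tilde{\beta}_0(\emptyset) = -1$, so $\sigma_0(S^{d-1}_{d+1}) = -1/\binom{d+1}{0} = -1$ and $\sigma_0^{\ast}(S^{d-1}_{d+1}) = -1/(d+2)$; substituting $-\sigma_0^{\ast}(Y) = 1/(d+2)$ into the Theorem \ref{theorem:5.7} identity produces the extra term in part (a).

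The step to get right is the degree-$0$ bookkeeping. The whole difference between parts (a) and (b) is the term $-\sigma_0^{\ast}(S^{d-1}_{d+1}) = 1/(d+2)$, and this is nonzero only because the empty subset contributes $\tilde{\beta}_0(\emptyset) = -1$ to $\sigma_0$; I must therefore use exactly the convention under which Theorem \ref{theorem:5.7} was proved, namely $\tilde{\beta}_0 = \beta_0 - 1$ with $\beta_0(\emptyset) = 0$. (This is also precisely the convention that keeps the Mayer--Vietoris step in the proof of Theorem \ref{theorem:5.7} valid in degree $0$, where the relevant intersection $X[B]$ may be empty.) I would also note that the argument requires $Y$ to be at least $2$-neighbourly, i.e.\ $d\ge 2$, which is the only case relevant here since the corollary is applied to links that are $2$-spheres.
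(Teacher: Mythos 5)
Your proof is correct and takes essentially the same route as the paper's: both apply Theorem \ref{theorem:5.7} with $k=d$ to the decomposition $S_1\# S_2 = (S_1\setminus\{\alpha\})\cup(S_2\setminus\{\alpha\})$ with intersection $Y=\partial\overline{\alpha}=S^{d-1}_{d+1}$, and then use $\sigma_0^{\ast}(S^{d-1}_{d+1})=-1/(d+2)$ and $\sigma_i^{\ast}(S^{d-1}_{d+1})=0$ for $1\leq i\leq d-2$. The only difference is that you make explicit two points the paper labels ``trivial'' or leaves implicit --- that $\sigma_i^{\ast}(S_j\setminus\{\alpha\};\FF)=\sigma_i^{\ast}(S_j;\FF)$ for $i\leq d-2$ because deleting a top-dimensional face cannot affect $\tilde{\beta}_i$ in those degrees, and that the value $-1/(d+2)$ rests on the convention $\tilde{\beta}_0(\emptyset)=-1$, which is exactly the convention needed in the degree-$0$ Mayer--Vietoris step of Theorem \ref{theorem:5.7}.
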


\begin{proof}
Let $S_1\cap S_2 = \overline{\alpha}$, where $\alpha$ is a $d$-face. Then $X : = S_1\# S_2$, $X_1 := S_1\setminus
\{\alpha\}$, $X_2 := S_2\setminus\{\alpha\}$, $Y = \partial\overline{\alpha} = S^{\hspace{.15mm}d-1}_{d +1}$
satisfy the hypothesis of Theorem \ref{theorem:5.7} (with $k=d$), and trivially
$\sigma_0^{\ast}(S^{\hspace{.15mm} d -1}_{d+1}) = -1/(d+2)$, $\sigma_i^{\ast}(S^{\hspace{.15mm}d-1}_{d+1}) = 0$
for $1\leq i\leq d-2$.
\end{proof}

\begin{notation} \label{notation:5.9}
{\rm For $k, \ell\geq 0$; $(k, \ell)\neq (0, 0)$, we denote by $kI^2_{12}\# \ell S^{\hspace{.15mm}2}_4$ a
triangulated $2$-sphere which can be written as a connected sum of $k+\ell$ triangulated 2-spheres, of which $k$
are copies of $I^2_{12}$ and the remaining $\ell$ are copies of $S^{\hspace{.15mm}2}_4$ $($in some order$)$. }
\end{notation}

\begin{corollary} \label{coro:5.10}
Let $k, \ell\geq 0$ and $(k, \ell) \neq (0,0)$. Then $\sigma_0^{\ast}(kI^2_{12}\# \ell S^{\hspace{.15mm}2}_4) =
\frac{617}{1716}k + \frac{1}{20}\ell - \frac{1}{4}$.
\end{corollary}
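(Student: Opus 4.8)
The plan is to treat $\sigma_0^{\ast}$ as an additive invariant and reduce everything to the two primitive summands $I^2_{12}$ and $S^{\hspace{.15mm}2}_4$. Writing $S = kI^2_{12}\#\ell S^{\hspace{.15mm}2}_4 = S_1\#\cdots\#S_m$ with $m=k+\ell$ as in Definition \ref{def:c_sum}, I would use the relation $S_1\#\cdots\#S_m=(S_1\#\cdots\#S_{m-1})\#S_m$, noting that each partial connected sum is again a triangulated $2$-sphere, and apply Corollary \ref{coro:5.8}(a) with $d=2$ (so that the correction term equals $\tfrac14$). Induction on $m$ then yields
\[
\sigma_0^{\ast}\big(kI^2_{12}\#\ell S^{\hspace{.15mm}2}_4\big)=k\,\sigma_0^{\ast}(I^2_{12})+\ell\,\sigma_0^{\ast}(S^{\hspace{.15mm}2}_4)+(k+\ell-1)\tfrac14 .
\]
So the corollary reduces to evaluating the two base constants and collecting the coefficients of $k$, $\ell$, and $1$. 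The value $\sigma_0^{\ast}(S^{\hspace{.15mm}2}_4)$ is immediate: the $1$-skeleton of $S^{\hspace{.15mm}2}_4$ is complete, so $X[A]$ is connected for every nonempty $A$, giving $\tilde\beta_0(X[A])=0$; only $A=\emptyset$ contributes, with $\tilde\beta_0=-1$. Hence $\sigma_0(S^{\hspace{.15mm}2}_4)=-1$ and $\sigma_0^{\ast}(S^{\hspace{.15mm}2}_4)=-\tfrac15$.

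The real work is the value $\sigma_0^{\ast}(I^2_{12})=\tfrac{47}{429}$, which depends only on the $1$-skeleton of $I^2_{12}$, the icosahedral graph. Here I would rewrite $\tilde\beta_0(X[A])$ as $(\#\,\text{components of }X[A])-1$ (with the convention that the empty complex has $0$ components), so that $\sigma_0(I^2_{12})=\big(\sum_A \#\text{comp}(X[A])/\binom{12}{|A|}\big)-13$, the constant $13$ coming from $\sum_{A}1/\binom{12}{|A|}=\sum_{k=0}^{12}1$. To handle the first sum I would count pairs $(A,K)$ where $K$ is a connected component of $X[A]$: fixing a connected induced subgraph $K$ with $s=|V(K)|$ vertices and external neighbourhood of size $b=|N(K)|$, and setting $t=12-s-b$, the sets $A$ of size $s+j$ having $K$ as a component are exactly those obtained by adjoining $j$ of the $t$ vertices lying outside $V(K)\cup N(K)$, so $K$ contributes $\sum_{j=0}^{t}\binom{t}{j}/\binom{12}{s+j}$. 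By the binomial identity displayed just after Theorem \ref{theorem:5.7} (with $p=t$, $r=s$, $q=b$) this equals $\tfrac{13}{(s+b+1)\binom{s+b}{s}}$, whence
\[
\sigma_0^{\ast}(I^2_{12})=\frac{\sigma_0(I^2_{12})}{13}=\Big(\sum_{K}\frac{1}{(s_K+b_K+1)\binom{s_K+b_K}{s_K}}\Big)-1 ,
\]
the sum running over all connected induced subgraphs $K$ of the icosahedral graph.

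I would then evaluate this finite sum by grouping the subgraphs $K$ according to the pair $(s_K,b_K)$ and exploiting the order-$120$ automorphism group of the icosahedron (a sanity check against $S^{\hspace{.15mm}2}_4$, whose $K_4$ gives $\tfrac45-1=-\tfrac15$, confirms the method). The main obstacle is precisely this enumeration, which is lengthy because of the many connected induced subgraphs; it is where machine computation with \texttt{simpcomp} is the natural verification, and one must check that the sum equals $1+\tfrac{47}{429}=\tfrac{476}{429}$. Finally I would substitute $\sigma_0^{\ast}(I^2_{12})=\tfrac{47}{429}$ and $\sigma_0^{\ast}(S^{\hspace{.15mm}2}_4)=-\tfrac15$ into the displayed recursion and simplify, using $\tfrac{47}{429}+\tfrac14=\tfrac{617}{1716}$ and $-\tfrac15+\tfrac14=\tfrac1{20}$, to obtain $\sigma_0^{\ast}(kI^2_{12}\#\ell S^{\hspace{.15mm}2}_4)=\tfrac{617}{1716}k+\tfrac{1}{20}\ell-\tfrac14$, as claimed.
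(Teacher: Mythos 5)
Your proposal is correct and takes essentially the same route as the paper: establish the base values $\sigma_0^{\ast}(S^{\hspace{.15mm}2}_4) = -\tfrac{1}{5}$ and $\sigma_0^{\ast}(I^2_{12}) = \tfrac{47}{429}$, then induct on $k+\ell$ using Corollary \ref{coro:5.8}(a) with $d=2$ and collect coefficients. Your pair-counting reduction of $\sigma_0^{\ast}(I^2_{12})$ to the weighted sum $\sum_{K}\frac{1}{(s_K+b_K+1)\binom{s_K+b_K}{s_K}}-1$ over connected induced subgraphs of the icosahedral graph is a correct and more explicit account of what the paper compresses into ``a computation shows,'' so deferring that finite enumeration to machine verification leaves your argument exactly as complete as the paper's own proof.
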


\begin{proof}
Trivially, $\sigma_0^{\ast}(S^{\hspace{.15mm}2}_{4}) = -1/5$. A computation shows that $\sigma_0^{\ast}
(I^{\hspace{.15mm}2}_{12}) = 47/429$. Thus the result holds when $k+\ell = 1$. The general result follows by an
induction on $k+\ell$, where the induction leap uses Corollary \ref{coro:5.8} (with $d=2$).
\end{proof}

Our next result lists a set of necessary conditions that a $\ZZ_2$-tight, triangulated 3-manifold must satisfy.
Note that the statement of the main result of \cite{BDSS} implies that the inequality in part (a) of Theorem
\ref{theorem:5.11} holds, more generally, for all triangulations of closed 3-manifolds. Equality holds in this
more general setting only if the triangulation is neighbourly and locally stacked. Therefore, Proposition
\ref{prop:BDTh2.24} implies that equality holds if and only if the triangulation is $\ZZ_2$-tight and stacked.
Still, we have included this inequality for the sake of completeness, and because its proof arises naturally in
the course of proving the rest of the theorem.

\begin{theorem} \label{theorem:5.11}
Let $M$ be a $\ZZ_2$-tight, closed, triangulated $3$-manifold. Then the parameters $n :=f_0(M)$ and $\beta_1 :=
\beta_1(M;\ZZ_2)$ must satisfy the following.
\begin{enumerate}[{\rm (a)}]
\item $(n-4)(n-5) \equiv 20\beta_1$ $($mod $776)$. Also, $(n-4)(n-5) \geq 20\beta_1$, with equality if and only
if $M$ is stacked.
\item  $429(n-4)(n-5) - 776n\lfloor\frac{n-4}{9}\rfloor \leq 8580 \beta_1$. Equality holds here if and only if
each vertex link of $M$ is a triangulated $2$-sphere of the form
$\lfloor\frac{n-4}{9}\rfloor I^2_{12}\# (n-4-9\lfloor\frac{n-4}{9}\rfloor)S^{\hspace{.15mm}2}_4$.
\end{enumerate}
\end{theorem}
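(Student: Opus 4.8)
The plan is to reduce the entire theorem to a single master identity expressing $\beta_1$ in terms of $n$ and the total number $K := \sum_{x\in V(M)} k_x$ of icosahedral summands occurring across all vertex links, and then to read off both parts as elementary consequences. First I would record the structural facts. Since $M$ is $\ZZ_2$-tight it is neighbourly (Lemma \ref{lemma:2.2}) and, being in characteristic two, $\ZZ_2$-orientable; hence Lemma \ref{lemma:2.6} applies and gives the crucial equality $\beta_1 = \mu_1(M)$. By Theorem \ref{theorem:5.6}, each vertex link decomposes as $M_x = k_x I^2_{12}\# \ell_x S^{\hspace{.15mm}2}_4$ for well-defined non-negative integers $k_x,\ell_x$ (well-definedness coming from the essentially unique primitive decomposition of Lemma \ref{lemma:c_sum}, together with the fact that $S^{\hspace{.15mm}2}_4$ and $I^2_{12}$ are non-isomorphic primitive spheres). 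Since a connected sum of $2$-spheres along a triangle loses three vertices at each step, such a link has exactly $9k_x+\ell_x+3$ vertices; but neighbourliness forces $f_0(M_x)=\deg(x)=n-1$, so I obtain the per-vertex constraint $9k_x+\ell_x = n-4$.

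Next I would compute the two relevant $\mu$-entries. Because every link has $n-1$ vertices, $\mu_0 = \sum_x \frac{1}{1+f_0(M_x)} = n\cdot\frac1n = 1$. Using $\mu_1 = \mu_0 + \sum_x \sigma_0^{\ast}(M_x)$ together with the explicit value $\sigma_0^{\ast}(kI^2_{12}\# \ell S^{\hspace{.15mm}2}_4)=\frac{617}{1716}k+\frac{1}{20}\ell-\frac14$ from Corollary \ref{coro:5.10}, and writing $K=\sum_x k_x$, $L=\sum_x\ell_x$, I get $\beta_1 = 1 + \frac{617}{1716}K + \frac{1}{20}L - \frac n4$. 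Summing the per-vertex constraint gives $9K+L = n(n-4)$; eliminating $L$ and clearing denominators (the common denominator being $8580 = 20\cdot 429$) collapses the constant, linear and quadratic terms into $(n-4)(n-5)$ and yields the master equation
\begin{align}
8580\,\beta_1 = 429(n-4)(n-5) - 776\,K. \nonumber
\end{align}
This simplification is the one place where the bookkeeping must be done carefully, but it is routine.

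With the master equation in hand, both parts become immediate. For (a), rewriting it as $(n-4)(n-5)-20\beta_1 = \tfrac{776}{429}K$ and noting that the left side is an integer while $\gcd(776,429)=1$ forces $429\mid K$; hence the right side is a non-negative integer multiple of $776$, proving simultaneously the congruence mod $776$ and the inequality $(n-4)(n-5)\geq 20\beta_1$. Equality holds precisely when $K=0$, i.e.\ when every $k_x=0$, i.e.\ when every link is a connected sum of $S^{\hspace{.15mm}2}_4$'s; by Lemma \ref{lemma:stacked_sphere} this says $M$ is locally stacked, and since $M$ is already $\ZZ_2$-tight, neighbourly and orientable, Proposition \ref{prop:BDTh2.24} upgrades this to ``$M$ is stacked'' (the converse being clear, as every stacked manifold is locally stacked).

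For (b), the master equation shows the claimed inequality $429(n-4)(n-5)-776n\lfloor\frac{n-4}{9}\rfloor \leq 8580\beta_1$ is equivalent to $K \leq n\lfloor\frac{n-4}{9}\rfloor$. But the per-vertex constraint $9k_x+\ell_x=n-4$ with $\ell_x\geq 0$ gives $k_x\leq\lfloor\frac{n-4}{9}\rfloor$ for each of the $n$ vertices, and summing yields exactly this bound. Equality forces $k_x=\lfloor\frac{n-4}{9}\rfloor$, and hence $\ell_x = n-4-9\lfloor\frac{n-4}{9}\rfloor$, for every $x$, which is precisely the stated description of the vertex links. The only genuinely delicate point in the whole argument is the arithmetic consolidation leading to the master equation; everything after it is a short deduction.
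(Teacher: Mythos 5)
Your proof is correct and follows essentially the same route as the paper's: the link decomposition from Theorem \ref{theorem:5.6}, the vertex count $9k_x+\ell_x = n-4$, Lemma \ref{lemma:2.6} together with Corollary \ref{coro:5.10} to obtain the master equation $8580\,\beta_1 = 429(n-4)(n-5) - 776K$, and then coprimality of $429$ and $776$ plus the bounds $0 \leq K \leq n\lfloor\frac{n-4}{9}\rfloor$ (with the equality cases handled via Lemma \ref{lemma:stacked_sphere} and Proposition \ref{prop:BDTh2.24}). The only cosmetic difference is that you aggregate into the totals $K$ and $L$ before eliminating, whereas the paper substitutes $\ell(x) = n-4-9k(x)$ vertex by vertex and then sums; the arguments are identical.
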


\begin{proof}
By Theorem \ref{theorem:5.6}, for each $x\in V(M)$ there are numbers $k(x)$ and $\ell(x)$ such that
\begin{align*}
M_x = k(x)I^2_{12} \# \ell(x) S^{\hspace{0.15mm}2}_4.
\end{align*}
Since $M$ is neighbourly by Lemma \ref{lemma:2.2}, equating the number of vertices in the two sides, we get $n-1
= 9k(x) + 3 + \ell(x)$. Hence $\ell(x) = n-4-9k(x)$, and we have $0\leq k(x)\leq \lfloor\frac{n-4}{9}\rfloor$.
Equality holds in the lower bound if and only if $M_x$ is stacked, and equality holds in the upper bound if and
only if $M_x$ is the connected sum of $\lfloor\frac{n-4}{9}\rfloor$ copies of $I^2_{12}$ and $n-4-
9\lfloor\frac{n-4}{9}\rfloor$ copies of $S^{\hspace{0.15mm}2}_4$. Let us put $k := \sum_{x\in V(M)} k(x)$. Then
$0\leq k\leq n\lfloor\frac{n-4}{9}\rfloor$. The lower bound holds with equality if and only if $M$ is locally
stacked (hence, by Proposition \ref{prop:BDTh2.24}, if and only if $M$ is stacked) and the upper bound holds with
equality if and only if each vertex link of $M$ is the connected sum of $\lfloor\frac{n-4}{9}\rfloor$ copies of
$I^2_{12}$ and $n-4- 9\lfloor\frac{n-4}{9}\rfloor$ copies of $S^{\hspace{0.15mm}2}_4$. Then, by Corollary
\ref{coro:5.10},

\begin{align} \label{eq:1}
\sigma_0^{\ast}(M_x) & = \frac{617}{1716}k(x) + \frac{1}{20}\ell(x) - \frac{1}{4} 
= \frac{617}{1716}k(x) + \frac{1}{20}(n-4-9k(x)) - \frac{1}{4} \nonumber \\
& = \frac{1}{20}n-\frac{194}{2145}k(x) - \frac{9}{20}.
\end{align}
Now Lemma \ref{lemma:2.6} implies $\beta_1 = \mu_1(M) := 1 + \sum_x\sigma_0^{\ast}(M_x)$. Therefore, adding
\eqref{eq:1} over all $x\in V(M)$, we get
\begin{align} \label{eq:2}
429((n-4)(n-5)-20\beta_1)= 776k.
\end{align}
Since 776 is relatively prime to 429, the result follows from \eqref{eq:2} and the above discussion on the
bounds on $k$.
\end{proof}

\begin{corollary}[An upper bound theorem for $\ZZ_2$-tight, triangulated 3-manifolds] \label{coro:5.12}
Let $M$ be a $\ZZ_2$-tight, closed, triangulated $3$-manifold with $n :=f_0(M)$, $\beta_1 := \beta_1(M;\ZZ_2)$.
Then $(n- 4)(617n -3861) \leq 15444\beta_1$. Equality holds here if and only if $M$ is locally icosian.
\end{corollary}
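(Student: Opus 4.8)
The plan is to read this off from Theorem~\ref{theorem:5.11}(b) by relaxing the floor function. I would start from the inequality established there,
\[
429(n-4)(n-5) - 776\,n\left\lfloor\tfrac{n-4}{9}\right\rfloor \leq 8580\,\beta_1,
\]
and apply the trivial bound $\lfloor\frac{n-4}{9}\rfloor \leq \frac{n-4}{9}$. Because any triangulated closed $3$-manifold has $n \geq 5$, the coefficient $-776n$ is negative, so replacing the floor by the exact quotient only decreases the left-hand side and preserves the inequality, giving
\[
429(n-4)(n-5) - \tfrac{776}{9}\,n(n-4) \leq 8580\,\beta_1.
\]

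The remaining step is arithmetic. Multiplying by $9$, factoring out $(n-4)$, and using $3861(n-5) - 776n = 3085n - 19305$, I obtain $(n-4)(3085n - 19305) \leq 77220\,\beta_1$; dividing by $5$ (with $3085 = 5\cdot 617$, $19305 = 5\cdot 3861$ and $77220 = 5\cdot 15444$) yields the asserted bound $(n-4)(617n - 3861) \leq 15444\,\beta_1$.

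The delicate point---and essentially the only nontrivial part---is the equality clause, where the floor relaxation must be tracked. Equality in the chain above forces two things at once: equality in Theorem~\ref{theorem:5.11}(b), so that each vertex link equals $\lfloor\frac{n-4}{9}\rfloor\,I^2_{12} \# (n-4-9\lfloor\frac{n-4}{9}\rfloor)\,S^2_4$; and tightness of $\lfloor\frac{n-4}{9}\rfloor \leq \frac{n-4}{9}$, i.e.\ $9 \mid (n-4)$, since $776n > 0$. Combined, these say every link is $\frac{n-4}{9}$ copies of $I^2_{12}$ with no $S^2_4$ summand, that is, icosian; hence $M$ is locally icosian. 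For the converse I would argue that if $M$ is locally icosian then each link is a connected sum of copies of $I^2_{12}$ only, and counting vertices via neighbourliness ($f_0(M_x) = n-1$, so $9k(x) + \ell(x) = n-4$) forces $\ell(x) = 0$ and $k(x) = \frac{n-4}{9}$ for every $x$. This automatically makes $9 \mid (n-4)$ and places each link in the extremal form of Theorem~\ref{theorem:5.11}(b), so both equality conditions hold and the bound is attained. The only thing to be careful about is exactly this interlocking: ``locally icosian'' is what simultaneously forces divisibility of $n-4$ by $9$ and the vanishing of the $S^2_4$ summands, so the two apparently separate equality constraints are in fact forced together.
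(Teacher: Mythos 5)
Your proposal is correct and follows essentially the same route as the paper: relax $\lfloor\frac{n-4}{9}\rfloor \leq \frac{n-4}{9}$ in Theorem~\ref{theorem:5.11}(b) (valid since the coefficient $-776n$ is negative), simplify the arithmetic, and characterize equality as the conjunction of $9 \mid (n-4)$ with equality in Theorem~\ref{theorem:5.11}(b). Your treatment of the equality clause is in fact more careful than the paper's, which dismisses as ``clear'' the interlocking you spell out---that local icosian-ness simultaneously forces the divisibility condition and the vanishing of the $S^2_4$ summands via the vertex count $n-4 = 9k(x) + \ell(x)$.
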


\begin{proof}
Since $\lfloor\frac{n-4}{9} \rfloor \leq \frac{n-4}{9}$, Theorem \ref{theorem:5.11} (b) implies that $429(n-4)(n
-5) - 776n(n-4)/9 \leq 8580\beta_1$. This simplifies to the given inequality. Clearly, equality holds here if and
only if $n \equiv 4$ (mod 9) and equality holds in Theorem \ref{theorem:5.11} (b).
\end{proof}

\begin{corollary} \label{coro:5.13}
Let $M$ be a $\ZZ_2$-tight, closed, triangulated $3$-manifold. If $f_0(M) \leq 71$ or $\beta_1(M;\ZZ_2) \leq
188$, then  $M$ is stacked.
\end{corollary}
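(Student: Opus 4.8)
The plan is to show that the hypotheses $f_0(M) \leq 71$ or $\beta_1(M;\ZZ_2) \leq 188$ force every vertex link of $M$ to be stacked, whence $M$ is locally stacked and therefore stacked by Proposition \ref{prop:BDTh2.24}. I would work from the parameters $n := f_0(M)$, $\beta_1 := \beta_1(M;\ZZ_2)$, and the quantity $k := \sum_{x\in V(M)} k(x)$ introduced in the proof of Theorem \ref{theorem:5.11}, where $k(x)$ counts the icosahedral summands in the link $M_x$. Recall from that proof the key identity \eqref{eq:2}, namely $429\big((n-4)(n-5) - 20\beta_1\big) = 776k$, together with the observation that $M$ is stacked if and only if every $k(x)=0$, i.e. if and only if $k = 0$.

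The strategy is therefore to argue that under either numerical hypothesis we must have $k = 0$. First I would use the nonnegativity bound $(n-4)(n-5) \geq 20\beta_1$ from Theorem \ref{theorem:5.11} (a) and the congruence $(n-4)(n-5) \equiv 20\beta_1 \pmod{776}$: if $M$ is \emph{not} stacked then $k \geq 1$, so by \eqref{eq:2} the positive quantity $(n-4)(n-5) - 20\beta_1$ is a positive multiple of $776/\gcd(776,429)$. Since $776$ and $429$ are relatively prime, this means $(n-4)(n-5) - 20\beta_1 \geq 776/429 \cdot 1$ forces $429 \mid 776k$ hence $429 \mid k$, so in fact $k \geq 429$ whenever $M$ is non-stacked. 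This divisibility jump is the crucial leverage: a non-stacked $\ZZ_2$-tight $3$-manifold cannot have just a few icosahedral summands; it must have at least $429$ of them counted across all links.

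From $k \geq 429$ for non-stacked $M$, I would derive a lower bound on $n$. On the one hand the upper bound $k \leq n\lfloor\frac{n-4}{9}\rfloor$ (again from the proof of Theorem \ref{theorem:5.11}) combined with $k \geq 429$ gives $n\lfloor\frac{n-4}{9}\rfloor \geq 429$; since each individual $k(x) \leq \lfloor\frac{n-4}{9}\rfloor$ and at least one $k(x) \geq 1$, we need $\lfloor\frac{n-4}{9}\rfloor \geq 1$, i.e. $n \geq 13$, but more sharply the total $k \geq 429$ forces $n$ to be reasonably large. I would then plug $k \geq 429$ back into \eqref{eq:2} to get $(n-4)(n-5) - 20\beta_1 = \frac{776k}{429} \geq 776$, so that $20\beta_1 \leq (n-4)(n-5) - 776$; contrapositively, a small $\beta_1$ bounds $n$ from above while $k \geq 429$ bounds $n$ from below, and I would check these two constraints are incompatible precisely when $n \leq 71$ or $\beta_1 \leq 188$. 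The main obstacle, and the step requiring genuine care, will be pinning down the exact threshold: one must verify that the smallest non-stacked configuration (which by $k\geq 429$ and the per-link cap $k(x)\leq \lfloor\frac{n-4}{9}\rfloor$ is most efficiently achieved when $n \equiv 4 \pmod 9$ and every link is maximally icosian) already requires $n \geq 72$ and $\beta_1 \geq 189$. I would carry this out by combining $20\beta_1 \geq (n-4)(617n-3861)/772$ from the locally icosian extremal case of Corollary \ref{coro:5.12} with the divisibility constraint, solving the resulting quadratic inequality in $n$ to locate the boundary, and confirming that the numbers $71$ and $188$ are exactly the largest values of $n$ and $\beta_1$ for which no valid non-stacked solution $(n,\beta_1,k)$ with $k \geq 429$ exists.
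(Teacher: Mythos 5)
Your proposal follows the same skeleton as the paper's own proof: the gcd observation that non-stackedness forces $429 \mid k$, hence $(n-4)(n-5) - 20\beta_1 = 776\ell$ with $\ell \geq 1$, is exactly the paper's starting point (its $\ell$ is your $k/429$), and feeding this into Corollary \ref{coro:5.12} is exactly how the paper derives its inequality \eqref{eq:3}, namely $(n-2)^2 \geq 3861\ell + 4$. But the step you defer to the end --- ``solving the resulting quadratic inequality in $n$ to locate the boundary'' --- is precisely where the argument breaks. Carrying that computation out with $\ell = 1$ gives $15444(n-4)(n-5) \geq 15444\cdot 776 + 20(n-4)(617n-3861)$, which simplifies to $(n-2)^2 \geq 3865$, i.e.\ $n \geq 65$; correspondingly $20\beta_1 = (n-4)(n-5) - 776 \geq 61\cdot 60 - 776 = 2884$ gives only $\beta_1 \geq 145$. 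So the quadratic locates the boundary at $64$ and $144$, not at $71$ and $188$: the whole range $65 \leq n \leq 71$ (and $145 \leq \beta_1 \leq 188$) is left uncovered, and sharpening the cap to $k \leq n\lfloor\frac{n-4}{9}\rfloor$ only improves $65$ to $67$. Your parenthetical that the critical configuration is ``maximally icosian with $n \equiv 4 \pmod 9$'' points in the wrong direction: near the threshold the binding constraint is not the icosian extremal case of Corollary \ref{coro:5.12} at all.

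The missing idea is arithmetic rather than optimizational: once $\ell = 1$, the equation $(n-4)(n-5) = 776 + 20\beta_1$, read modulo $20$, says $(n-4)(n-5) \equiv 776 \equiv -4 \pmod{20}$, which holds only for $n \equiv 12$ or $17 \pmod{20}$. Combined with $n \geq 65$ this jumps to $n \geq 72$, which is the contradiction the paper needs in its Case 1 ($n \leq 71$, where \eqref{eq:3} forces $\ell = 1$); and in its Case 2 ($\beta_1 \leq 188$, whence $n \leq 73$ by Corollary \ref{coro:5.12} and again $\ell = 1$), the same congruence gives $n \leq 72$, with $n = 72$ forcing $\beta_1 = 189 > 188$, so that $n \leq 71$ and Case 1 applies. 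In short, the decisive constraint is the integrality of $\beta_1$ in the Diophantine equation $(n-4)(n-5) = 776\ell + 20\beta_1$ --- your feasibility check would succeed only if you test integer solvability of this equation for each $n$, not if you solve a real quadratic inequality. (A minor slip as well: $15444/20 = 772.2$, so your restatement of Corollary \ref{coro:5.12} as $20\beta_1 \geq (n-4)(617n-3861)/772$ is off; keep it in the form $15444\beta_1 \geq (n-4)(617n-3861)$.)
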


\begin{proof}
Suppose, if possible, $M$ is not stacked. Then, by Theorem \ref{theorem:5.11} (a), there is an integer $\ell \geq
1$ such that $ (n-4)(n- 5) = 776\ell +20\beta_1$. Hence Corollary \ref{coro:5.12} implies
\begin{align*}
15444(n-4)(n-5) \geq 15444\times 776\ell + 20(617n-3861)(n-4).
\end{align*}
This inequality simplifies to
\begin{align} \label{eq:3}
(n-2)^2 \geq 3861\ell +4.
\end{align}

\noindent {\em Case 1.} $n \leq 71$. Hence, by \eqref{eq:3}, $3861\ell + 4 \leq (71-2)^2 = 4761$. Thus, $\ell=1$.
Therefore, $(n-4)(n-5) = 776 +20\beta_1$ and $(n-2)^2 \geq 3861+4 > 62^2$. Thus, $n\geq 65$. But, $(n-4)(n-5)
\equiv 776 \equiv -4$ (mod 20). Hence $n \equiv 12$ or 17 (mod 20). But, $n\geq 65$. So $n\geq 72$, a
contradiction. So the result is true in this case.

\noindent {\em Case 2.} $\beta_1(M;\ZZ_2) \leq 188$. Then, by Corollary \ref{coro:5.12}, $n\leq 73$. Hence
\eqref{eq:3} yields $\ell=1$. Thus $(n-4)(n-5) = 776 + 20 \beta_1$, and therefore $n$ is congruent to 12 or 17
(mod 20). Hence $n \leq 72$, and if $n=72$ then $\beta_1 = 189$. Therefore, $n\leq 71$. The result now follows by
Case 1.
\end{proof}

\begin{corollary} \label{coro:5.14}
Let $M$ be a closed, topological $3$-manifold admitting a tight triangulation. If $\beta_1 (M,\mathbb{Z}_2) <
189$, then $M$ is homeomorphic to one of the following manifolds
\begin{align*}
S^3, \, (S^2 \times S^1)^{\#k}, \, (\TPSS)^{\#k},
\end{align*}
where $k = 1, 12, 19, 21, 30, 63, 78, 82, 99, 154, 177$ or $183$.
\end{corollary}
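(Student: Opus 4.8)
The plan is to establish the classification in three stages: (i) the given triangulation must be \emph{stacked}; (ii) the homeomorphism type of $M$ is then pinned down by the Walkup-class description of stacked manifolds; and (iii) neighbourliness forces an arithmetic constraint on $\beta_1$ that leaves only the listed values.

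For stage (i), recall that a tight triangulation is $\FF$-tight for some field $\FF$. If $\mathrm{char}(\FF)\neq 2$, then Theorem~\ref{theorem:4.8} shows directly that $M$ is orientable, neighbourly and stacked. If $\mathrm{char}(\FF)=2$, then by Lemma~\ref{lemma:2.8}(a) the triangulation is $\ZZ_2$-tight, and since $\beta_1(M;\ZZ_2)<189$, Corollary~\ref{coro:5.13} shows that $M$ is stacked, while Lemma~\ref{lemma:2.2} shows it is neighbourly. Thus in every case $M$ carries a neighbourly, stacked triangulation.

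For stage (ii), I would apply Proposition~\ref{prop:Walkup_class}: $M$ is obtained from a stacked $3$-sphere by a sequence of $g\ge 0$ combinatorial handle additions. A stacked $3$-sphere is homeomorphic to $S^3$, and each combinatorial handle addition — removing two tetrahedra and gluing the two resulting boundary $2$-spheres by the bijection $\psi$ — is topologically a connected sum with an $S^2$-bundle over $S^1$: with $S^2\times S^1$ when $\psi$ matches orientations and with $\TPSS$ otherwise. Hence $M$ is a connected sum of $g$ such bundles. Since $H_1(S^2\times S^1;\ZZ)=H_1(\TPSS;\ZZ)=\ZZ$ and $H_1$ of a connected sum of closed $3$-manifolds is the direct sum, we obtain $H_1(M;\ZZ)\cong\ZZ^{g}$; in particular $H_1$ is torsion-free and $\beta_1(M;\FF)=g$ for every field $\FF$. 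If $g=0$ then $M\cong S^3$; if $g\ge 1$ and $M$ is orientable then all handles are of the first kind and $M\cong (S^2\times S^1)^{\#g}$; and if $M$ is non-orientable then the relation $(S^2\times S^1)\#\TPSS\cong\TPSS\#\TPSS$ lets me absorb every orientable handle, giving $M\cong(\TPSS)^{\#g}$.

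For stage (iii), the triangulation is neighbourly, stacked, and $\FF$-orientable (automatically if $\mathrm{char}(\FF)=2$, and by Lemma~\ref{lemma:2.5} otherwise), so Proposition~\ref{prop:BDTh2.24}(iii) gives $\binom{n-4}{2}=10\,\beta_1(M;\FF)=10g$, where $n=f_0(M)$ and $\beta_1(M;\FF)=g$ by stage (ii). Hence $(n-4)(n-5)=20g$, so $g=(n-4)(n-5)/20$ is an integer, forcing $n-4\equiv 0,1,5,16\pmod{20}$; together with $g\le 188$ (hence $(n-4)(n-5)\le 3760$, i.e. $n-4\le 61$, and note $n\ge 5$ so $n-4\ge 1$), a direct enumeration over $n-4\in\{1,5,16,20,21,25,36,40,41,45,56,60,61\}$ yields $g\in\{0,1,12,19,21,30,63,78,82,99,154,177,183\}$. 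The value $g=0$ gives $S^3$ and the remaining values are exactly the listed $k$, completing the classification. I expect the genuinely delicate point to be the topological content of stage (ii): identifying a combinatorial handle addition with connected sum by an $S^2$-bundle over $S^1$ and carrying out the orientable/non-orientable bookkeeping (in particular justifying $(S^2\times S^1)\#\TPSS\cong\TPSS\#\TPSS$ and that $\beta_1$ equals the handle count); the reduction in stage (i) and the arithmetic in stage (iii) are routine given the results already established.
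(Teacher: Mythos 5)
Your proposal is correct and takes essentially the same route as the paper, whose proof is a terse citation of exactly the ingredients you use: Corollary~\ref{coro:5.13} for stackedness, Proposition~\ref{prop:Walkup_class} for the topological types $S^3$, $(S^2\times S^1)^{\#k}$, $(\TPSS)^{\#k}$, and the relation $(n-4)(n-5)=20\beta_1$ followed by enumeration. The only cosmetic differences are that the paper gets this relation from the equality case of Theorem~\ref{theorem:5.11}(a) where you invoke Proposition~\ref{prop:BDTh2.24}(iii) (these are equivalent here), and that you spell out the odd-characteristic reduction via Theorem~\ref{theorem:4.8} and the orientation bookkeeping for handle additions, both of which the paper leaves implicit.
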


\begin{proof}
Let $X$ be an $n$-vertex, tight triangulation of $M$. Then, by Corollary \ref{coro:5.13}, Theorem
\ref{theorem:5.11} (a) and Proposition \ref{prop:Walkup_class}, $M$ is homeomorphic to $S^3$, $(S^2 \times
S^1)^{\#k}$ or $(\TPSS)^{\#k}$, where $(n-4)(n-5) = 20k$. The result follows from this.
\end{proof}

The following table gives a list of small values for the parameters $(n, \beta_1)$ of a locally icosian
$\ZZ_2$-tight, closed,  triangulated 3-manifold. Indeed, the number $n$ of vertices in any such triangulated
3-manifold must be congruent modulo 15444 to one of the eight values of $n$ listed in this table.

\begin{table}[ht]
\centering {\small
\begin{tabular}{|c|c|c|c|c|c|c|c|c|}
\hline
&&&&&&&&\\[-3mm]
$n$ & 1408 & 3865 & 5269 & 8320 &9724&12181&13585& 15448\\[1mm]
\hline
&&&&&&&&\\[-3mm]
$\beta_1$ & 78625 & 595186 & 1106970 &2762081 &3773610&5922778&7367441&9527555 \\[1mm]
\hline
\end{tabular}
} \caption{Small feasible parameters for locally icosian, $\ZZ_2$-tight 3-manifolds} \label{tbl:icsian.3mfd}
\end{table}


The following tables list the small values for parameters $(n, \beta_1)$ satisfying the conclusion of Theorem
\ref{theorem:5.11}. Table \ref{tbl:Z2tight.3mfd} for $(n, \beta_1)$ with strict inequality in part (a) of the
theorem. Table \ref{tbl:Z2tight.3mfd.c=} for $(n, \beta_1)$ with equality in part (b) of the theorem.
\begin{table}[ht]
\centering {\small
\begin{tabular}{|c|c|c|c|c|c|c|c|c|c|c|c|c|c|c|c|}
\hline
&&&&&&&&&&&&&&&\\[-3mm]
$n$ & 72 & 77 & 92 & 96 &97&101&108& 112&113&116&117&121&128&132&133\\[1mm]
\hline
&&&&&&&&&&&&&&&\\[-3mm]
$\beta_1$ &189&224&344&341&389&388&458&539&511&544&594&601&685&774&748\\[1mm]
\hline
\end{tabular}
} \caption{Small feasible parameters for non-stacked, $\ZZ_2$-tight 3-manifolds} \label{tbl:Z2tight.3mfd}
\end{table}
\begin{table}[hb]
\centering {\small
\begin{tabular}{|c|c|c|c|c|c|c|c|c|c|c|c|c|}
\hline
&&&&&&&&&&\\[-3mm]
$n$ &825&1296&1408&1760&1881&1989&2145&2580&3168&3276\\[1mm]
\hline
&&&&&&&&&&\\[-3mm]
$\beta_1$ &26871&66637&78625&123049&140677&157336&183109&264924&399817&427582\\[1mm]
\hline
\end{tabular}
} \caption{Small feasible parameters for $\ZZ_2$-tight 3-manifolds (with equality in Th. \ref{theorem:5.11} (b))}
\label{tbl:Z2tight.3mfd.c=}
\end{table}

\section{Examples}

In this section, we present some examples which help to put the results of this paper in proper perspective.

The first example shows that a strongly minimal, stacked, closed, triangulated $3$-manifold need not be tight. So
the converse of Proposition \ref{prop:sminimal} is not true.

\begin{example}[Walkup \cite{Wa}] \label{exam:Walkup}
{\rm Let $\mathcal{J}$ be the pure 4-dimensional simplicial complex with vertex set $\ZZ_{10} = \ZZ/10\ZZ$ and an
automorphism $i\mapsto i+1$ (mod 10). Modulo this automorphism, there is only one representative facet (maximal
face) of $\mathcal{J}$, namely $12345$. The face vector of $\mathcal{J}$ is $(10, 40, 60, 40, 10)$. Each vertex
link of $\mathcal{J}$ is an 8-vertex stacked 3-ball and hence $\mathcal{J}$ is a locally stacked, triangulated
4-manifold with boundary. The boundary $\mathcal{K} = \partial\mathcal{J}$ has face vector $(10, 40, 60, 30)$ and
triangulates $S^{\hspace{.15mm}2} \times S^1$. Since each 2-simplex of $\mathcal{J}$ is in the boundary,
$\mathcal{J}$ is stacked and hence $\mathcal{K}$ is a stacked, closed, triangulated 3-manifold. Since
$\mathcal{K}$ is not neighbourly, it is not tight with respect to any field.

Let $Y$ be a triangulation of $S^{\hspace{.15mm}2} \times S^1$. Since the only closed, triangulated $3$-manifolds
with at most $9$ vertices are $S^{\hspace{.15mm}3}$ and $\TPSS$ (see \cite{Wa}), it follows that $f_0(Y) \geq
10$. Then, by \cite[Theorem 5.2]{NS}, $f_1(Y) \geq 4f_0(Y) \geq 40$. Since $f_0(Y)- f_1(Y) +f_2(Y) -f_3(Y) = 0$
and $2f_2(Y) = 4f_3(Y)$, it follows that $f_3(Y) = f_1(Y) -f_0(Y) \geq 4f_0(Y) - f_0(Y) = 3f_0(Y)\geq 30$. Hence
$f_2(Y) = 2f_3(Y) \geq 60$. Thus, $f_i(Y) \geq f_i(\mathcal{K})$ for $0\leq i\leq 3$. Therefore, $\mathcal{K}$ is
strongly minimal.}
\end{example}

The following example shows that a neighbourly, locally stacked, closed, triangulated 3-manifold need not be
stacked. Equivalently, it need not be tight. Thus, the hypothesis `stacked' in Theorem \ref{theorem:4.8} can not
be relaxed to `locally stacked'.

\begin{example}[Lutz \cite{LuThesis}] \label{exam:Lutz}
{\rm Let $\mathcal{L}$ be the pure 3-dimensional simplicial complex with vertex set $\ZZ_{10} = \ZZ/10\ZZ$ and an
automorphism $i\mapsto i+1$ (mod 10). Modulo this automorphism, a set of representative facets of $\mathcal{L}$
is:
\begin{align}
1236, \, 1237, \, 1257, \, 1368. \nonumber
\end{align}
Then, each vertex link is a 9-vertex stacked 2-sphere and hence $\mathcal{L}$ is a locally stacked, neighbourly,
closed, triangulated 3-manifold. It triangulates $S^2 \times S^1$ and hence $\beta_1(\mathcal{L}; \FF) = 1$ for
any field $\FF$. Therefore, by Proposition \ref{prop:BDTh2.24}, $\mathcal{L}$ is not stacked  and not tight with
respect to any field. Since there are non-neighbourly 10-vertex triangulations of $S^2 \times S^1$ (cf.
$\mathcal{K}$ in Example \ref{exam:Walkup}), $\mathcal{L}$ is not strongly minimal. By Propositions
\ref{prop:sminimal} and \ref{prop:BDTh2.24}, this also shows that $\mathcal{L}$ is not stacked and is not
$\FF$-tight for any field $\FF$. Since $\mathcal{L}$ is not stacked, $\mathcal{L}$ is not in $\mathcal H^{4}$ by
Proposition \ref{prop:Walkup_class}.}
\end{example}

The following example shows that an arbitrary $\FF$-tight simplicial complex need not be strongly minimal (we do
not know if it must be minimal in the sense of having the fewest number of vertices among all triangulations of
its geometric carrier).

\begin{example} \label{exam:nsmtight}
{\rm Consider the neighbourly $2$-dimensional simplicial complex $X$ on the vertex set $\{1, 2, 3, 4\}$ whose
maximal faces are $123$, $234$ and $14$. It is easy to see that $X$ is $\FF$-tight for any field $\FF$. Observe
that $|X|$ is also triangulated by the simplicial complex $Z$ (on the same vertex set) whose maximal faces are
$123$, $14$, $24$. Thus, $X$ is not strongly minimal. Therefore, Conjecture \ref{conj:KL} is not true for
arbitrary simplicial complexes. }
\end{example}

Recall that a $d$-dimensional simplicial complex is a {\em pseudo-manifold} if (i) each maximal face is
$d$-dimensional, (ii) each $(d-1)$-face is in at most two $d$-faces, and (iii) for any two $d$-faces $\alpha$ and
$\beta$, there exists a sequence $\alpha=\alpha_1, \dots, \alpha_m=\beta$ of $d$-faces such that $\alpha_i
\cap\alpha_{i+1}$ is a $(d-1)$-face for $1\leq i\leq m-1$. We now extend the definitions of stackedness and
locally stackedness to pseudo-manifolds as follows.

\begin{definition} \label{def:st-pmfd}
{\rm A pseudo-manifold $Q$ of dimension $d+1$ is said to be {\em stacked} if all its faces of codimension (at
least) two are in the boundary $\partial Q$. A pseudo-manifold $P$ without boundary of dimension $d$ is said to
be {\em stacked} if there is a stacked pseudo-manifold $Q$ of dimension $d+1$ such that $P = \partial Q$. A
pseudo-manifold is said to be {\em locally stacked} if all its vertex links are stacked pseudo-manifolds (with or
without boundaries).}
\end{definition}

\begin{example}[Emch \cite{Em}] \label{exam:3-pmfd}
{\rm Consider the $3$-dimensional pseudo-manifold $\mathcal{P}$ with vertex set $\{1, \dots, 8\}$ and the
facet-transitive automorphism group ${\rm PGL}(2,7) = \langle (12345678), (132645),$ $(16)(23)(45)(78)\rangle$.
Modulo this group, a facet representative is $1235$. The link of each vertex is isomorphic to the 7-vertex torus
$T^2_7$. Its face vector is $(8, 28, 56, 28)$.

Since $\mathcal{P}$ is a $3$-neighbourly pseudo-manifold with Euler characteristic $8$, it follows that its
integral homologies are torsion free and its vector of Betti numbers is $(1,0,8,1)$. Using its doubly transitive
automorphism group, one may calculate easily that the sigma vector of $T^2_7$ (with respect to any field) is
$(-1,8,1)$. Hence the mu-vector of $\mathcal{P}$ (with respect to any field) also equals $(1,0,8,1)$. Therefore,
\cite[Theorem 1.6]{BaMu} implies that $\mathcal{P}$ is tight with respect to all fields.

Now, if $Q$ is a pseudo-manifold of dimension 4 such that $\partial Q = \mathcal{P}$ and ${\rm skel}_2(Q) = {\rm
skel}_2(\mathcal{P})$ then for any vertex $v$ of $Q$, we have $\partial(Q_v) = \mathcal{P}_v$ and ${\rm
skel}_1(Q_v) = {\rm skel}_1(\mathcal{P}_v)$. Thus, $Q_v$ is a stacked, triangulated 3-manifold whose boundary is
the 7-vertex torus $P_v$. But it is easy to see that the 7-vertex torus bounds exactly three (distinct but
isomorphic) stacked, triangulated 3-manifolds, each with seven 3-faces. This implies that $f_4(v\ast Q_v) =
f_3(Q_v) = 7$. Then, $f_4(Q) = (8\times 7)/5$, which is not possible. Therefore, $\mathcal{P}$ is not stacked.
Thus, Theorems \ref{theorem:4.8} and \ref{theorem:5.1} are not true for $3$-dimensional pseudo-manifolds.

}
\end{example}

The next example disproves a putative generalization of Theorem \ref{theorem:3.5} in which the induced cycle in
the hypothesis is replaced by arbitrary manifolds without boundary.

\begin{example} \label{exam:end}
{\rm Consider the $6$-vertex triangulation $\mathbb{RP}^2_6$ of $\mathbb{RP}^2$ whose facets are $123$, $134$,
$145$, $156$, $126$, $235$, $245$, $246$, $346$, $356$. Let $X = \overline{123456} \cup (7\ast \mathbb{RP}^2_6)$.
So $X$ is the union of a standard 5-ball and a cone over $\mathbb{RP}^2_6$. The simplicial complex $X$ is
isomorphic to an induced subcomplex of the 13-vertex triangulation $M$ of $SU(3)/SO(3)$ obtained by Lutz in
\cite{LuThesis}. (Indeed, for each vertex $v$ of $M$, $M_v$ contains two induced $\mathbb{RP}^2_6$'s, say
$M_v[A]$ and $M_v[B]$, where $V(M_v) = A\sqcup B$. Both the induced subcomplexes $M[\{v\}\cup A]$ and $M[\{v\}
\cup B]$ are isomorphic to $X$.) Thus, $M$ is $\ZZ_2$-tight \cite{KL}, $M_v[A] = \mathbb{RP}^2_6$ is a surface
and yet the induced subcomplex $X= M[\{v\}\cup A]$ is not a pseudo-manifold. }
\end{example}

\medskip

\noindent {\bf Acknowledgements:} The second and third authors are supported by  DIICCSRTE, Australia (project
AISRF06660) and DST, India (DST/INT/AUS/P-56/2013(G)) under the Australia-India Strategic Research Fund. The
second author is also supported by the UGC Centre for Advanced Studies (F.510/6/CAS/2011 (SAP-I)). 

{\small

}


\begin{thebibliography}{99}

\bibitem{BaTight} B. Bagchi, {\em A tightness criterion for homology manifolds with or without boundary},
{European J. Combin.} {\bf 46} (2015) 10--15.

\bibitem{BaMu}
B. Bagchi, {\em The mu vector, Morse inequalities and a generalized lower bound theorem for locally tame
combinatorial manifolds}, {European J. Combin.} {\bf 51} (2016) 69--83. 

\bibitem{BBDSS}
B. Bagchi, B. A. Burton, B. Datta, N. Singh, J. Spreer, {\em Efficient algorithms to decide tightness},
arXiv:1412.1547, 2014.

\bibitem{BDLBT}
B. Bagchi, B. Datta, {\em Lower bound theorem for normal pseudomanifolds}, {Expositiones Math.} {\bf 26}
(2008) 327--351.

\bibitem{BDStacked}
B. Bagchi, B. Datta, {\em On $k$-stellated and $k$-stacked spheres}, {Discrete Math.} {\bf 313} (2013)
2318--2329.

\bibitem{BDStellated}
B. Bagchi, B. Datta, {\em On stellated spheres and a tightness criterion for combinatorial manifolds},
{European J. Combin.} {\bf 36} (2014) 294--313.

\bibitem{BDSS}
B. A. Burton, B. Datta, N. Singh, J. Spreer, {\em Separation index of graphs and stacked 2-spheres}, {J.
Combin. Theory} Ser. A {\bf 136} (2015) 184--197.

\bibitem{BDSS2}
B. A. Burton, B. Datta, N. Singh, J. Spreer, A construction principle for tight and minimal triangulations of
manifolds, arXiv:1511.04500, 2015.

\bibitem{DM}
B. Datta, S. Murai, {\em On stacked triangulated manifolds}, arXiv:1407.6767, 2014.

\bibitem{ef}
F. Effenberger, {\em Stacked polytopes and tight triangulations of manifolds}, { J. Combin. Theory} Ser. A {\bf
118} (2011) 1843--1862.

\bibitem{ES}
F. Effenberger, J. Spreer, \emph{{\tt simpcomp} -- a {\tt GAP} toolkit for simplicial complexes}, Version
2.1.2, 2009--2015, https://github.com/simpcomp-team/simpcomp.

\bibitem{Em}
A. Emch, {\em Triple and multiple systems, their geometric configurations and groups}, {Trans. Amer. Math. Soc.}
{\bf 31} (1929) 25--42.

\bibitem{Ka}
G. Kalai, {\em Rigidity and the lower bound theorem 1}, {Invent. math.} {\bf 88} (1987) 125--151.

\bibitem{MFO}
G. Kalai, I. Novik, F. Santos, V. Welker, {\em Geometric and Algebraic Combinatorics}, Oberwolfach Rep. {\bf 12} (2015), 285--368. doi: 10.4171/OWR/2015/5. 

\bibitem{Ku}
W. K\"{u}hnel, {\em Tight Polyhedral Submanifolds and Tight Triangulations}, Lecture Notes in Mathematics {\bf
1612}, Springer-Verlag, Berlin, 1995.

\bibitem{KL}
W. K\"{u}hnel, F. Lutz, {\em A census of tight triangulations}, {Period. Math. Hungar.} {\bf 39} (1999)
161--183.

\bibitem{LuThesis}
F. H. Lutz, {\em Triangulated Manifolds with Few Vertices and Vertex-Transitive Group Actions}, Thesis (D 83, TU
Berlin), Shaker Verlag, Aachen, 1999.

\bibitem{MW}
P. McMullen, D. W. Walkup, {\em A generalized lower bound conjecture for simplicial polytopes}, {Mathematika}
{\bf 18} (1971) 264--273.

\bibitem{Mu}
S. Murai, {\em Tight combinatorial manifolds and graded Betti numbers}, Collect. Math. {\bf 66} (2015) 367--386.

\bibitem{MNStacked}
S. Murai, E. Nevo, {\em On $r$-stacked triangulated manifolds}, {J. Algebraic Combin.} {\bf 39} (2014)
373--388.

\bibitem{NS}
I. Novik, E. Swartz, {\em Socles of Buchsbaum modules, complexes and posets}, {Adv. Math.} {\bf 222} (2009) 
2059--2084.

\bibitem{SpOdd}
J. Spreer, {\em Necessary conditions for the tightness of odd-dimensional combinatorial manifolds}, {European J.
Combin.} {\bf 51} (2016) 475--491.

\bibitem{Wa}
D. W. Walkup, {\em The lower bound conjecture for 3- and 4-manifolds}, {Acta Math.} {\bf 125} (1970) 75--107.

\end{thebibliography}
\end{document}